\numberwithin{equation}{section}
\newtheorem{lemma}[equation]{Lemma}
\newtheorem{proposition}[equation]{Proposition}
\theoremstyle{definition}
\newtheorem{definition}[equation]{Definition}
\newtheorem{example}[equation]{Example}
\newtheorem{corollary}[equation]{Corollary}
\theoremstyle{remark}
\newtheorem*{remark}{Remark}
\newcommand{\mb}[1]{{\mathbf #1}}
\newcommand{\mc}[1]{{\mathcal #1}}
\newcommand{\tc}[1]{\textcolor{#1}}
\DeclarePairedDelimiter{\ceil}{\lceil}{\rceil}
\title{Pair component categories for directed spaces}
\author{Martin Raussen} 
\address{Department of
  Mathematical Sciences, Aalborg University, Skjernvej 4A,
  DK-9220 Aalborg {\O}st, Denmark} 
\email{raussen@math.aau.dk}
\urladdr{http://people.math.aau.dk/~raussen/} 
\thanks{The author thanks the Hausdorff Research Institute for Mathematics in Bonn, Germany, for its hospitality during two visits as part of the programme Applied and Computational Algebraic Topology in 2017 that allowed him to begin thinking about and discussing the topics dealt with in this paper.\\
A preliminary version of this article was presented at the Abel-Symposium 2018 in Geiranger, Norway. Support is gratefully acknowledged.\\
Thanks are also due to the referees who rightly suspected errors in a previous version and who suggested several improvements of the presentation.}
\keywords{d-space; homotopy flow; pair category; localization, component category, cubical complex}
\subjclass{18B35, 55P60, 55U40, 68Q85}
\begin{document}
\begin{abstract}
The notion of a homotopy flow on a directed space was introduced in \cite{Raussen:07} as a coherent tool for comparing spaces of directed paths between pairs of points in that space with each other. If all directed maps along such a 1-parameter deformation preserve the homotopy types of path spaces, such a flow and the parameter maps are called inessential.

For a directed space, one may consider various categories whose objects are pairs of reachable points to which a functor associates the space of directed paths between them. The monoid of all inessential maps acts on such a category by endofunctors leaving the associated path spaces invariant up to homotopy. We construct a pair component category as quotient category: it has as objects pair components along which the homotopy type is invariant -- for a coherent and transparent reason.

This paper follows up \cite{FGHR:04,GH:07,Raussen:07} and removes some of the restrictions for their applicability. At least in several examples, it gives reasonable results for spaces with non-trivial directed loops. If one uses homology equivalence instead of homotopy equivalence as the basic relation, it yields an alternative to computable versions of ``natural homology''  introduced in \cite{DGG:15} and elaborated in \cite{Dubut:17}. It refines, for good and for evil, the stable components introduced and investigated in \cite{Ziemianski:18}. 
\end{abstract}

\maketitle

\section{Introduction}
\subsection{Directed spaces and spaces of directed paths}
A \emph{directed space} (or \emph{d-space} for short) \cite{Grandis:01,Grandis:09} is a topological space $X$ together with a subset $\vec{P}(X)\subset P(X)=X^I$ of \emph{directed paths} (or \emph{d-paths}) satisfying reasonable properties: $\vec{P}(X)$ includes all constant paths, it is closed under concatenation $\alpha *\beta$ of d-paths $\alpha$ and $\beta$ and under non-decreasing reparametrizations. The set $\vec{P}(X)$ is given the compact-open topology inherited from $P(X)=X^I$. A map $f:X\to Y$ is a \emph{d-map} if $f(\vec{P}(X))\subset\vec{P}(Y)$. 

Particularly important d-spaces are the directed interval $\vec{I}$ with $\vec{P}(\vec{I})$ consisting of all non-decreasing maps $I\to I$, and the d-spaces $\vec{I}^n$ which are the building blocks for cubical complexes - the geometric realizations of pre-cubical sets with d-paths that are cubewise non-decreasing. Cubical complexes are the underlying d-spaces for Higher Dimensional Automata, models for true concurrency introduced and investigated by Pratt and van Glabbeek \cite{Pratt:91,Glabbeek:91,Glabbeek:06}; cf Section \ref{sss:pcs} for details. 

For two points $x,y\in X$, we consider the subspace $\vec{P}(X)_x^y\subset\vec{P}(X)$ of all d-paths with source $x$ and target $y$. The point $y$ is \emph{reachable} from $x$ if this subset is non-empty; and $(x,y)$ is then called a reachable pair.  A d-path $p$ and a reparametrization $q=p\circ\varphi$ with $\varphi:\vec{I}\to\vec{I}$ a surjective non-decreasing map are reparametrization equivalent. The symmetric and transitive closure of this relation is called reparametrization equivalence \cite{FR:07}. Equivalence classes, the so-called \emph{traces}, are the elements of \emph{trace spaces} $\vec{T}(X)_x^y$ with the quotient topology under the natural projection $p:\vec{P}(X)_x^y\to\vec{T}(X)_x^y$. In many cases, and in particular for cubical complexes $X$, these projection maps are homotopy equivalences \cite{Raussen:09}.
 
Unlike in classical topology, the topology of path and trace spaces may vary depending on the pair of end points, even for path-connected spaces, since the reverse path of a d-path is, in general, not a d-path; for simple examples cf Section \ref{s:ex}. In this article, we will partition, not the d-space $X$ itself (this was done in \cite{FGHR:04} and \cite{GH:07}), but the subspace $\vec{X^2}\subset X\times X$ of reachable pairs (cf Section \ref{sss:extcat}) into coherent ``components'' along which the homotopy type of the path and trace spaces have to be invariant.  

\subsection{A motivating example}\label{ss:Dubut}
Reading the impressive and comprehensive thesis \cite{Dubut:17} by J\'{e}r\'{e}my Dubut, an example (p.\ 162) with graphical representation in Figure 1 caught my attention:

\begin{center}\begin{figure}[h]\label{fig:Dub1}
\begin{tikzpicture}
  \draw (0,0) -- (4,0) -- (4,2) -- (2,2) -- (2,4) -- (0,4) -- (0,0);
\draw (2.5,2.5) -- (4.5,2.5) -- (4.5,4.5) -- (2.5,4.5) -- (2.5,2.5);
\draw (0,2) -- (2,2);
\draw (2,0) -- (2,2);
\node at (1,1) {$A$};
\node at (1,3) {$B_2$};
\node at (3,1) {$B_1$};
\node at (3.5,3.5) {C};
\draw[line width=0.6mm, color=red] (4,0) -- (4,2);
\draw[line width=0.6mm, color=blue] (0,4) -- (2,4);
\draw[line width=0.6mm, color=red] (2.5,2.5) -- (2.5,4.5);
\draw[line width=0.6mm, color=blue] (2.5,2.5) -- (4.5,2.5);
\end{tikzpicture}
\caption{The cubical complex $D$}
\end{figure}
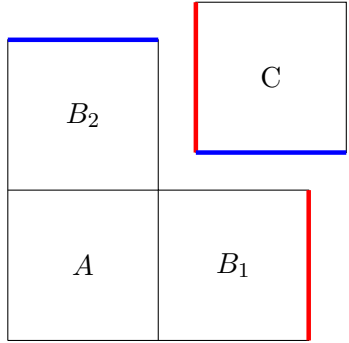\end{center}

The cubical complex $D$ (geometric realization of a pre-cubical set, cf Section \ref{sss:pcs}, Definition \ref{df:pcs}) consists of four 2-cells $A, B_1, B_2, C$. Remark that the cell $C$ shares a face with both cells $B_1$ and $B_2$. As a result, path spaces between points in cell $A$ and points in cell $C$ have different homotopy types depending on their \emph{relative} positions: The space $\vec{P}(D)_{[A,(x_1,x_2)]}^{[C,(y_1,y_2)]}, \; 0\le x_i, y_i\le 1$,
\begin{itemize}
\item is empty if $x_1>y_1$ and $x_2>y_2$
\item has two contractible components if $x_1\le y_1$ and $x_2\le y_2$
\item is contractible else;
\end{itemize}
cf Figure 2.

\begin{center}\begin{figure}[h]\label{fig:Dubu2}
\begin{tikzpicture}
  \draw (0,0) -- (4,0) -- (4,2) -- (2,2) -- (2,4) -- (0,4) -- (0,0);
\draw (2.5,2.5) -- (4.5,2.5) -- (4.5,4.5) -- (2.5,4.5) -- (2.5,2.5);
\draw (0,2) -- (2,2);
\draw (2,0) -- (2,2);
\node at (1,1) {$A$};
\node at (1,3) {$B_2$};
\node at (3,1) {$B_1$};
\node at (3.5,3.5) {C};
\draw[line width=0.6mm, color=red] (4,0) -- (4,2);
\draw[line width=0.6mm, color=blue] (0,4) -- (2,4);
\draw[line width=0.6mm, color=red] (2.5,2.5) -- (2.5,4.5);
\draw[line width=0.6mm, color=blue] (2.5,2.5) -- (4.5,2.5);
\node at (1,0.6) {$\blacksquare$};
\node at (2.7,2.7) {$\bullet$};
\end{tikzpicture}
\begin{tikzpicture}
  \draw (5,0) -- (9,0) -- (9,2) -- (7,2) -- (7,4) -- (5,4) -- (5,0);
\draw (7.5,2.5) -- (9.5,2.5) -- (9.5,4.5) -- (7.5,4.5) -- (7.5,2.5);
\draw (5,2) -- (7,2);
\draw (7,0) -- (7,2);
\node at (6,1) {$A$};
\node at (6,3) {$B_2$};
\node at (8,1) {$B_1$};
\node at (8.5,3.5) {C};
\draw[line width=0.6mm, color=red] (9,0) -- (9,2);
\draw[line width=0.6mm, color=blue] (5,4) -- (7,4);
\draw[line width=0.6mm, color=red] (7.5,2.5) -- (7.5,4.5);
\draw[line width=0.6mm, color=blue] (7.5,2.5) -- (9.5,2.5);
\draw[color=red] (6,0.6) -- (9,1.11);
\draw[color=red] (7.5,3.61) -- (8,3.7);
\node at (6,0.6) {$\blacksquare$};
\node at (8,3.7) {$\bullet$};
\end{tikzpicture}

\bigskip\begin{tikzpicture}
  \draw (0,0) -- (4,0) -- (4,2) -- (2,2) -- (2,4) -- (0,4) -- (0,0);
\draw (2.5,2.5) -- (4.5,2.5) -- (4.5,4.5) -- (2.5,4.5) -- (2.5,2.5);
\draw (0,2) -- (2,2);
\draw (2,0) -- (2,2);
\node at (1,1) {$A$};
\node at (1,3) {$B_2$};
\node at (3,1) {$B_1$};
\node at (3.5,3.5) {C};
\draw[line width=0.6mm, color=red] (4,0) -- (4,2);
\draw[line width=0.6mm, color=blue] (0,4) -- (2,4);
\draw[line width=0.6mm, color=red] (2.5,2.5) -- (2.5,4.5);
\draw[line width=0.6mm, color=blue] (2.5,2.5) -- (4.5,2.5);
\draw[color=blue](1,0.6) -- (1.46,4);
\draw[color=blue] (3.96,2.5) -- (4,2.8);
\node at (1,0.6) {$\blacksquare$};
\node at (4,2.8) {$\bullet$};
\end{tikzpicture}
 \begin{tikzpicture}
  \draw (5,0) -- (9,0) -- (9,2) -- (7,2) -- (7,4) -- (5,4) -- (5,0);
\draw (7.5,2.5) -- (9.5,2.5) -- (9.5,4.5) -- (7.5,4.5) -- (7.5,2.5);
\draw (5,2) -- (7,2);
\draw (7,0) -- (7,2);
\node at (6,1) {$A$};
\node at (6,3) {$B_2$};
\node at (8,1) {$B_1$};
\node at (8.5,3.5) {C};
\draw[line width=0.6mm, color=red] (9,0) -- (9,2);
\draw[line width=0.6mm, color=blue] (5,4) -- (7,4);
\draw[line width=0.6mm, color=red] (7.5,2.5) -- (7.5,4.5);
\draw[line width=0.6mm, color=blue] (7.5,2.5) -- (9.5,2.5);
\draw[color=red] (6,0.6) -- (9,1.23);
\draw[color=red] (7.5,3.73) -- (9,4);
\draw[color=blue](6,0.6) -- (6.34,4);
\draw[color=blue] (8.84,2.5) -- (9,4);
\node at (6,0.6) {$\blacksquare$};
\node at (9,4) {$\bullet$};
\end{tikzpicture}
\caption{Homotopy types of path spaces depend on end points}
\end{figure}
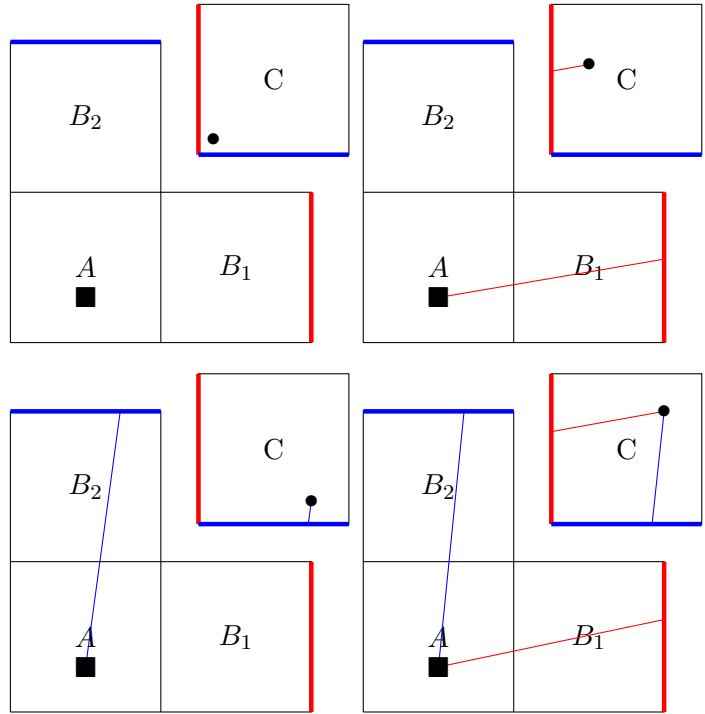\end{center}

Observe that a d-path with source in $[C,(y_1,y_2)]$ and target in $[C,(y_1',y_2')]$ hence does \emph{not} induce a homotopy equivalence between 
$\vec{P}(D)_{[A,(x_1,x_2)]}^{[C,(y_1,y_2)]}$  and $\vec{P}(D)_{[A;(x_1,x_2)]}^{[C;(y_1',y_2')]}$ by extension if $y_1<x_1\le y_1'$ and $x_2\le y_2$ (or if $y_2<x_2\le y_2'$ and $x_1\le y_1$); cf Figure 2. In particular, the only d-paths $\sigma$ within $D$ that, by extension, induce homotopy  equivalences on \emph{all} non-empty path spaces $\vec{P}(D)_{[A,(x_1,x_2)]}^{[C,(y_1,y_2)]}$ are the trivial ones: $y_1=y_1', y_2=y_2'$; cf Figure 3. Similarly for d-paths connecting various points within  $A$; they are never weak isomorphisms in the parlance of \cite{GH:07,FGHMR:16}. As a consequence, the fundamental category $\vec{\pi}_1(D)$ of the cubical complex $D$ (cf Section \ref{sss:piX}) allows only a trivial system of weak isomorphisms (consisting solely of the contant paths).

\begin{center}\begin{figure}[h]\label{fig:Dubut3}
\begin{tikzpicture}
\draw (0,0) -- (4,0) -- (4,2) -- (2,2) -- (2,4) -- (0,4) -- (0,0);
\draw (2.5,2.5) -- (4.5,2.5) -- (4.5,4.5) -- (2.5,4.5) -- (2.5,2.5);
\draw (0,2) -- (2,2);
\draw (2,0) -- (2,2);
\node at (1,1) {$A$};
\node at (1,3) {$B_2$};
\node at (3,1) {$B_1$};
\node at (3,2.8) {$C$};
\draw[line width=0.6mm, color=red] (4,0) -- (4,2);
\draw[line width=0.6mm, color=blue] (0,4) -- (2,4);
\node at (1,0.6) {$\blacksquare$};
\fill[fill=magenta] (3.5,4.5) -- (3.5,3.1) -- (4.5,3.1) -- (4.5,4.5) -- (3.5,4.5);
\fill[fill=red] (2.5,3.1) -- (3.5,3.1) -- (3.5,4.5) -- (2.5,4.5) -- (2.5,3.1);
\fill[fill=blue] (3.5,2.5) -- (4.5,2.5) -- (4.5,3.1) -- (3.5,3.1) -- (3.5,2.5);
 \draw[-> , color=green, line width=0.5mm] (3.5,4) -- (3.8,4);
  \draw[-> , color=green, line width=0.5mm] (4,3.1) -- (4,3.4);
\end{tikzpicture}
\caption{No non-trivial d-path within $A$ or $C$ gives rise to a weak isomorphism}
\end{figure}
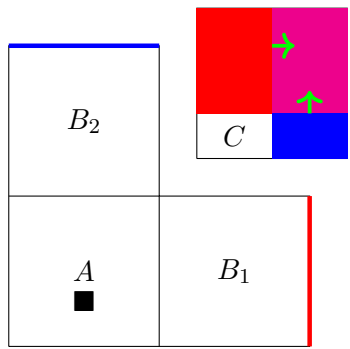\end{center}

\subsection{Previous work}
In order to obtain discrete invariants, previous work \cite{GH:07,FGHMR:16} studies localizations and component categories (cf Section \ref{ss:lc}) of the fundamental category $\vec{\pi}_1(X)$ (cf Section \ref{sss:piX}) of a d-space without non-trivial loops. So-called weak isomorphisms consisting of inessential d-paths (inducing equivalences on all non-empty path spaces) form systems of morphisms that are used for localizations or quotients. 

In Dubut's example in the previous section, it turns out that each component consists of a single point only; the components do not give rise to any state space reduction at all! K.\ Ziemia\'{n}ski \cite{Ziemianski:18} has recently suggested so-called \emph{stable} components to overcome this problem, cf also Section \ref{ss:relations}. 

A similar problem has been known for a long time for directed spaces with non-trivial directed loops. The simplest such space $X=\vec{S^1}$, the directed circle with counterclockwise directed paths, does not allow any non-trivial paths giving rise to weak isomorphisms either, cf \cite[Section 6.4.1]{FGHMR:16}.

\subsection{Contributions} The present paper takes a different approach resulting in reasonable finite component categories in both above mentioned cases (cf Section \ref{sss:compDubut} and Section \ref{sss:dc}); in particular, this is probably the first definition of a component category that can deal with directed spaces containing non-trivial directed loops.

The category of departure is no longer the fundamental category $\vec{\pi}_1(X)$ (with point as objects, cf Section \ref{sss:piX}) but its extension category $E\vec{\pi}_1(X)$ that has \emph{pairs} of reachable points within $\vec{X^2}:=\{ (x,y)\in X\times X|\; \vec{P}(X)_x^y\neq\emptyset\}$ as objects; the morphisms are pairs of d-homotopy classes of d-paths, cf. Section \ref{sss:extcat}. This category comes with a trace functor $\vec{T}(X)_*^*: E\vec{\pi}_1(X)\to \mathbf{Ho-Top}$ (cf Section \ref{sss:loc}) associating to a pair $(x,y)\in\vec{X^2}$ the trace space $\vec{T}(X)_x^y$. The aim is to identify, in a functorial way, pairs and d-homotopy classes in the extension category that give rise to the same data under $\vec{T}(X)_*^*$, up to (homotopy) equivalence.

The resulting component categories are results of a quotient formation that arises from an \emph{action} of a \emph{submonoid} of the monoid $\vec{C}(X,X)$ consisting of all d-maps from $X$ into itself. A d-map $f: X\to X$ is called \emph{inessential} if it is d-homotopic to the identity map $id_X$ via a  d-homotopy $H$ (called a \emph{homotopy flow}) that, moreover, is path space preserving (psp), ie it induces homotopy equivalences $\vec{T}(H_t): \vec{T}(X)_x^y\to\vec{T}(X)_{H_t(x)}^{H_t(y)},\; t\in I, x,y\in X$, on non-empty trace spaces, cf Section \ref{sss:homflow}, Definition \ref{def:dhom}. 

These inessential d-maps act on the extension category $E\vec{\pi}_1(X)$ by endo-func\-tors. In particular, the trace functor $\vec{T}(X)_*^*$ factors over the action of these inessential morphisms (up to isomorphisms). The arising components (objects in a component category) are the path components among the pairs of reachable points with respect to the effects of path space preserving homotopy flows on the end points: In which ways can a pair of source and target points be perturbed by a 1-parameter deformation of the entire d-space without changing the homotopy type (or another reasonable invariant) of the path space inbetween? 

To construct the pair component category, we make use of a localization and quotient process (cf Section \ref{ss:lc}) on a category with objects $\vec{X^2}$ and morphisms generated freely by those in $E\vec{\pi}_1(X)$ and, additionally, morphisms arising from the endo-functors induced by inessential d-maps on $X$ -- modulo several natural relations; for details cf Section \ref{sss:dext} and \ref{sss:isc}.

Several important properties of the arising pair component categories are collected in Section \ref{s:homflowprop}. In Section \ref{s:ex}, they are used in the investigation of a number of basic examples illustrating scope and results of the chosen approach. In particular, the pair component category of the directed circle $\vec{S^1}$ has two objects: the diagonal and its complement. Morphisms between them correspond either to the natural numbers $\mb{N}$ or to the augmented natural numbers $\mb{N}_{\ge 0}$, cf Section \ref{sss:dc}.

It is certainly out of scope to determine pair components of a general d-space and their category algorithmically. When the space in question is a cubical complex, ie the geometric realization of a pre-cubical set, (Section \ref{sss:pcs}), it is possible to find an approximation in the form of a so-called \emph{order component category}, cf Section \ref{s:precub}; usually much finer than the pair component category discussed previously: One considers only specific inessential d-maps (for details cf Section \ref{s:precub}) that preserve each cell of the complex. We verify that, for a cubical complex with finitely many cells, the localization and the quotient process, as above, using only these specific d-maps, gives rise to a \emph{finite} order component category. The pair component categories of cubical complexes are quotients of these order component categories.

Some of the constructions in this paper are borrowed from \cite{Raussen:07} and developed to suit new purposes. The reader should also compare K.\ Ziemianski's recent interesting paper \cite{Ziemianski:18} defining and investigating \emph{stable} components. Comments can be found in Section \ref{ss:relations}.
Pointers to future work, in particular investigating how far this approach can be made functorial, conclude this article in Section \ref{ss:dhe}.

\section{Categorical constructs. Towards components and their categories}
\subsection{A zoo of categories}

\subsubsection{Trace category}\label{sss:TX}
The \emph{trace category} \cite{Raussen:07} $\vec{T}(X)$ of a d-space $X$ has as
objects the elements of $X$. Morphisms from $x$ to $y$ are given by
$\vec{T}(X)(x,y):=\vec{T}(X)_x^y$. Identities are given by constant
traces, and composition by concatenation (up to
reparametrization; hence associative). The trace category is enriched in $\mathbf{Top}$.

A d-map $f:X\to Y$ between two d-spaces $X$ and $Y$ induces a functor
(of topologically enriched categories)
$\vec{T}(f):\vec{T}(X)\to\vec{T}(Y)$.

\subsubsection{Fundamental category}\label{sss:piX}
The fundamental category $\vec{\pi}_1(X)$ \cite{Grandis:01,FGR:06,Grandis:09,FGHMR:16} of a d-space $X$ is an ordinary category. It arises from the trace category  by identifying morphisms (ie traces) that are related by a directed homotopy (or a d-homotopy \cite{Grandis:01,Grandis:09}; this is not always the same notion!).  Morphisms are identified along
the path-component functor $\pi_0: \mathbf{Top}\to\mathbf{Set}$, giving rise to a quotient functor $\vec{\pi}_0: \vec{T}(X)\to\vec{\pi}_1(X)$.  
A d-map $f:X\to Y$ induces a functor $\vec{\pi}_1(f): \vec{\pi}_1(X)\to\vec{\pi}_1(Y)$.

\subsubsection{Extension and factorization categories}\label{sss:extcat}
Since we are interested in path spaces between given end points and their inter-relation, we need a category allowing for bookkeeping of \emph{both} start and end points.  The reachable pairs in a d-space $X$, ie those in $\vec{X^2}:=\{ (x,y)\in X\times X|\; \vec{P}(X)_x^y\neq\emptyset\}$, form the objects of the \emph{extension category}
$E\vec{T}(X)$ (called preorder category $\vec{D}(X)$ in \cite{Raussen:07}); cf.\ also
\cite{Mitchell:72,DGG:15,Dubut:17}) of the trace category $\vec{T}(X)$ (cf \ref{sss:TX}). It is considered as a \emph{full}
  subcategory of $\vec{T}(X)^{op}\times\vec{T}(X)$; an (extension) morphism has the
  form
  $(\alpha *, *\beta)\in E\vec{T}(X)$:
  \begin{equation}\label{eq:extmor}
  \xymatrix{(x,y)\ar[r]^{*\beta}\ar[d]_{\alpha *}\ar[rd]^{(\alpha *, *\beta)} & (x,y')\ar[d]^{\alpha *}\\
(x',y)\ar[r]^{*\beta} & (x',y')} 
\end{equation}
  
It was remarked by Fajstrup and Hess \cite{FH:18} that it is important to consider these categories \emph{together} with the subcategories which allow only right. resp.\ only left extensions in order to distinguish clearly different d-spaces (for example the one arising by reversing all arrows from the original one); for a careful analysis, consult \cite{CGM:18}.

  The extension category comes equipped with a functor
  $\vec{T}(X)_*^*:E\vec{T}(X)\to\mathbf{Top}$ with
  $\vec{T}(X)_*^*(x,y):=\vec{T}(X)_x^y$ and
  $\vec{T}(X)_*^*(\alpha *,*\beta)(\gamma)=\alpha * \gamma * \beta$.

More useful in the future is the extension category $E\vec{\pi}_1(X)$ of the fundamental category with morphisms
$(\alpha *, *\beta)\in E\vec{\pi}_1(X)((x,y),(x',y'))=\vec{\pi}_1(X)_{x'}^x\times\vec{\pi}_1(X)_y^{y'}$. It comes equipped with a functor $\vec{\pi}_1(X)_*^*:  E\vec{\pi}_1(X)\to\mathbf{Ho-Top}$ into the category of homotopy types (cf Section \ref{sss:loc}).

 A d-map $f:X\to Y$ between d-spaces induces a functor
  $E\vec{T}(f)$ between extension categories $E\vec{T}(X)\to E\vec{T}(Y)$ and a natural transformation
  $\vec{T}(f)_*^*$ from the functor $\vec{T}(X)_*^*$ to $\vec{T}(Y)_*^*\circ E\vec{T}(f)$ on $E\vec{T}(X)$. Likewise a functor $E\vec{\pi}_1(f): E\vec{\pi}_1(X)\to E\vec{\pi}_1(Y)$ and a natural transformation $\vec{\pi}_1(f)_*^*$ from $\vec{\pi}_1(X)_*^*$ to $\vec{\pi}_1(Y)_*^*\circ E\vec{\pi}_1(f)$ on $E\vec{\pi}_1(X)$.

Homotopy groups (of path spaces) require a base point. To allow the necessary categorical bookkeeping, one may consider the \emph{factorization categories} of the trace category, resp.\ the fundamental category, with traces, resp.\ d-homotopy classes of such as objects; cf \cite{Raussen:07,Dubut:17,CGM:18}. Although not essentially more difficult, we will not use factorization categories in the subsequent parts of this paper. 

\subsubsection{Endo-d-category}\label{sss:endo}
We will need further categories with the same objects (the set $\vec{X^2}$ of reachable pairs) but with
different morphisms: 

\begin{definition}\label{def:endo}
Let $X$ denote a topological space.
\begin{enumerate}
\item $C(X,X)$ denotes the topological monoid of all continuous self (or endo)-maps on $X$, equipped with the compact-open topology. 
\item If $X$ is a d-space, a \emph{d-map} $f\in C(X,X)$ is called
an \emph{endo-d-map}. Altogether, the endo-d-maps on $X$ form the topological submonoid $\vec{C}(X,X)\subset C(X,X)$ (under composition). 
\item Endo-d-maps give rise to the morphisms of the category
$d(X)$ with objects in $\vec{X^2}$, i.e., $d(X)((x,y)),(x',y'))$ is the space of all
d-maps  $f\in\vec{C}(X,X)$ satisfying $fx=x'$
and $fy=y'$.  Composition is given by composition of d-maps; the
identity map $id_X$ gives rise to all identity morphisms.
\end{enumerate} 
\end{definition}

The category $d(X)$ is topologically enriched. 

Remark that also
the endo-d-category comes with a functor
$d(X)_*^*:d(X)\to\mathbf{Top}$; on the objects, it is defined as for the extension category; on morphisms, it associates to $f: (x,y)\to (fx,fy)$ the map $\vec{T}(f) : \vec{T}(X)_x^y\to\vec{T}(X)_{fx}^{fy},\; [\sigma ]\to [f\circ\sigma ]$. 

In general, a d-map between d-spaces $X$ and $Y$ does \emph{not} induce a functor from $d(X)$ into $d(Y)$.

\subsubsection{d-extension category}\label{sss:dext}
Combining the morphisms from the
categories $d(X)$ and $E\vec{\pi}_1(X)$ yields the d-ex\-ten\-sion category 
of $X$:  The set of objects is again the set of reachable pairs in $\vec{X^2}$. The morphisms arise from a quotient of the category freely generated by the morphisms from $d(X)$ and from $E\vec{\pi}_1(X)$ by composition modulo the congruence relation making diagrams (\ref{eq:dE}) and (\ref{eq:flow}) below commute:

\begin{equation}\label{eq:dE}
\xymatrixcolsep{5pc}\xymatrix{(fx,fy)\ar[r]^{((f\circ\sigma)*, *(f\circ\tau ))} & (fx',fy')\\
(x,y)\ar[u]_f\ar[r]^{(\sigma *, * \tau )} & (x',y')\ar[u]_f},
\end{equation}
 for any $f\in\vec{C}(X,X), (x,y)\in\vec{X^2}, \sigma\in\vec{\pi}_1(X)_{x'}^x, \tau\in\vec{\pi}_1(X)_y^{y'}$; and

\begin{equation}\label{eq:flow}
\xymatrix{(fx, fy)\ar[r]^{*H(y)} & (fx, gy)\\
(x,y)\ar[u]^f\ar[r]^g & (gx,gy)\ar[u]_{H(x)*}}
\end{equation}
for any simple future d-homotopy (cf Definition \ref{def:dhom}) $H: X\times\vec{I}\to X$ from  $H_0=f$ to $H_1=g$, and for all $(x,y)\in \vec{X^2}$. Here $H(x)$ is the d-path arising by restricting $H$ to $x$.

\begin{remark}\label{rem:dext}
\begin{enumerate}
\item Imposing (\ref{eq:dE}) means that every endo-d-map $f\in\vec{C}(X,X)$ defines a functor from  $E\vec{\pi}_1(X)$ into itself. Altogether they define a monoid action of the monoid $\vec{C}(X,X)$  on $E\vec{\pi}_1(X)$ by such endo-functors.
\item Diagram (\ref{eq:dE}) encodes a \emph{coherent} $lr$-extension property (compare \cite{FGHR:04,GH:07}) of the morphisms in the subcategory of endo-d-maps with respect to the subcategory of morphisms in the subcategory of extensions.
\item As a consequence of (\ref{eq:dE}), every morphism in $dE\vec{\pi}_1(X)$ is a composition of just one endo-d-morphism and one extension morphism\\ $(\sigma *, *\tau)\circ f: (x,y)\to (fx,fy)\to (x',y')$.
\item If there is a simple d-homotopy $H$ from $H_0=f$ to $H_1=g$ such that $f(x)=H_0(x)=H_t(x)=H_1(x)=g(x)$ and $f(y)=H_0(y)=H_t(y)=H_1(y)=g(y),\; t\in I$, then, according to (\ref{eq:flow}), $f,g\in\vec{C}(X,X)$ give rise to the same morphism $f=g: (x,y)\to (fx,fy)=(gx,gy)$ in $dE\vec{\pi}_1(X)$.
\item The functors from the previous paragraphs can be aggregated to define a functor $dE\vec{\pi}_1(X)_*^*:dE\vec{\pi}_1(X)\to\mathbf{Ho-Top}$ (and this is why it makes sense to impose the commutativity of (\ref{eq:dE}) and (\ref{eq:flow}) above).
\item Note that, even if the d-space $X$ does not allow any non-trivial loops, the d-extension category may include non-identity endomorphisms arising from combinations of d-maps and of extensions. 
\item Inclusion of morphism sets defines functors from $E\vec{\pi}_1(X)$, resp.\ from $d(X)$ into $dE\vec{\pi}_1(X)$ -- all categories have the same objects given by $\vec{X^2}$. By (4) above, the latter functor is not faithful in general.
\end{enumerate} 
\end{remark}

\subsection{Homotopy flows and inessential d-maps}
\subsubsection{Homotopy flows}\label{sss:homflow}
We start by recalling elementary definitions about homotopy notions in directed algebraic topology: We distinguish the directed unit interval $\vec{I}$ with $\vec{P}(\vec{I})$ consisting of all non-decreasing self maps (d-paths) and the undirected unit interval with $\vec{P}(I)$ consisting of all constant maps (paths).

\begin{definition}\label{def:dhom}
Let $X$ and $Y$ denote two d-spaces.
\begin{enumerate}
\item A d-map $H: X\times\vec{I}\to Y$ is called a simple d-homotopy from $f=H_0$ to $g=H_1$ (also: a future d-homotopy from $f$ to $g$, or a past d-homotopy from $g$ to $f$) 
\item A d-map $H: X\times I\to Y$ is called a dihomotopy (or neutral d-homotopy).
\item Simple d-homotopies $H$ from $X$ to $X$ with $H_0=id_X$ (resp.\ $H_1=id_X$) are called future (resp.\ past) homotopy flows. Simple dihomotopies with $H_0=id_X$ (resp.\  $H_1=id_X$) are called neutral homotopy flows.
\end{enumerate}
\end{definition}
Both for a d-homotopy and for a dihomotopy, all level maps $H_t: X\to Y,\; t\in I$ are d-maps. Only for d-homotopies, every path  $H(x,-):I\to Y,\; x\in X$, is a d-path in $Y$.

The notion of homotopy flow \cite{Raussen:07} is meant to capture
some, but not all, of the properties of a flow for a dynamical system associated to a vector field.
Note that it is not demanded that the level maps $H_t: X\to X,\; 0\le t\le 1$, are invertible; they need
neither be injective nor surjective. The map $H$ is not supposed to
satisfy a group (or monoid) law either.

\paragraph{Concatenations and compositions.}
Two simple d-homotopies between endo-d-maps on a d-space $X$, say from $h$ to $f$, resp.\ from $f$ to $g$, can be concatenated to yield a d-homotopy from $h$ to $g$. In particular, 
a homotopy flow $H$ from $id_X$ to $f$ can be concatenated with a simple d-homotopy from $f$ to $g$ on $X$ to yield a homotopy flow from $id_X$ to $g$.

Homotopy flows on a given d-space $X$ can be composed in various ways
(cf \cite{Raussen:07}). Here we propose a generalized construction: For two simple future d-homotopies $G,H: X\times \vec{I}\to X$ let
$[G,H]: X\times \vec{I}^2\to X$ be given by $[G,H]((s,t);x)=G_s(H_t(x))$. In particular, $[G,H](0,0)=G_0\circ H_0, [G,H](s,0)=G_s\circ H_0, [G,H](0,t)=G_0\circ H_t$,\\ $[G,H](s,1)=G_s\circ H_1, [G,H](1,t)=G_1\circ H_t$ and $[G,H](1,1)=G_1\circ H_1$. Remark that this construction does not commute: in general, $[G,H]\neq [H,G]$. 

Any d-path $p\in\vec{P}(\vec{I}^2)_{(0,0)}^{(1,1)}$ provides a simple d-homotopy  
$[G,H]\circ p$ from $G_0\circ H_0$ to $G_1\circ H_1$. In particular, d-paths on the 1-skeleton of $\vec{I}^2$ joining $(0,0)$ and $(1,1)$ yield such simple d-homotopies  ``via'' $G_1\circ H_0$, resp.\ ``via'' $G_0\circ H_1$. In the special case of homotopy flows $G, H$, their composition $[G,H]$ provides homotopy flows ending at $G_1\circ H_1$ (via $G_1$, resp.\ via $H_1$). 

Similarly for past d-homotopies on $X$. For neutral homotopy flows the path $p$ does not have to be directed. The construction above can be generalized to yield a composition $[H_1,\dots , H_n]: X\times \vec{I}^n\to X$ of $n$ simple d-homotopies $H_i$.

\subsubsection{Psp homotopy flows and inessential d-maps}\label{sss:psp}
Ziemia\'{n}ski \cite[Definition 2.6]{Ziemianski:18} gives a list of very natural requirements to a family $\mc{F}$ of morphisms in the category $\mathbf{Top}$ to be considered as an equivalence system. We will concentrate here on the following particular cases (also considered in \cite{Ziemianski:18}): 
\begin{description}
\item[$\mc{F}=\mc{F}_{\infty}$] consists of all (weak) homotopy equivalences.
\item[$\mc{F}=\mc{F}_0$] consists of all maps inducing bijections on sets of path components ($\pi_0$).
\item[$\mc{F}=\mc{H}_{A,k}$] consists of all maps inducing isomorphisms on $H_n(-;A)$ for $n\le k$; $A$ denotes an abelian group.
\end{description}

In the following, all families $\mc{F}$ are supposed to be sandwiched between $\mc{F}_0$ and $\mc{F}_{\infty}: \mc{F}_0\subseteq\mc{F}\subseteq\mc{F}_{\infty}$. We are, first of all, interested in homotopy flows that preserve path spaces up to
 an $\mc{F}$-equivalence:

\begin{definition}\label{def:psp}
  \begin{enumerate}
  \item A d-map $f:X\to Y$ is called $\mc{F}$-\emph{path space preserving}
    ($\mc{F}$-psp for short) if
    $\vec{T}(f):\vec{T}(X)_{x_1}^{x_2}\to\vec{T}(Y)_{fx_1}^{fx_2}$ is an $\mc{F}$-equivalence for all pairs $(x_1,x_2)\in\vec{X^2}$.
    \item A d-homotopy (in particular, a homotopy flow) is called $\mc{F}$-psp if every d-map $H_t:X\to X,\; t\in I,$ is
      psp.
    \item An endo d-map $f:X\to X$ is called future/past/neutral
      $\mc{F}$-\emph{inessential} if there exists a future/past/neutral $\mc{F}$-psp homotopy
      flow with $H_0=id_X$ and $H_1=f$ (resp.\ $H_0=f$ and $H_1=id_X$).
  \end{enumerate}
  \end{definition}

In the following, we will write abbreviate the ``flavours''  \emph{future} with $\alpha =+$, \emph{past} with $\alpha =-$, and \emph{neutral} with $\alpha=0$. We may then talk about an $\alpha\mc{F}$ homotopy flow, resp.\ inessential map.

\begin{lemma}\label{lem:homflow} Let $X$ denote a d-space. 
\begin{enumerate}
\item The concatenation of an $\alpha\mc{F}$ homotopy flow on $X$ ending at $f$ with an $\alpha\mc{F}$ homotopy from $f$ to $g$ (cf Section \ref{sss:homflow}) yields an $\alpha\mc{F}$ homotopy flow ending at $g$.
\item The $\alpha\mc{F}$-inessential maps on a d-space $X$ are closed under composition; they form a submonoid $\Sigma_{\mc{F}}^{\alpha}(X)\subset \vec{C}(X,X)$ of the monoid of all endo-d-maps on $X$.
\item Consider the morphisms denoted $f$ in (\ref{eq:dE}) in the case where $f$ is $\mc{F}$-psp. Then the functor $dE\vec{\pi}_1(X)_*^*$ (cf Section \ref{sss:dext}, Remark \ref{rem:dext}) sends \emph{all} these morphisms into $\mc{F}$-equivalences. 
\item If $f$ is $+\mc{F}$-inessential via a $+\mc{F}$ homotopy flow $H$ keeping $x_1$ \emph{fixed} (i.e., $fx_1=x_1$), then all extensions $(c_{x_1},*H(x_2)): (x_1,x_2)\to (x_1,fx_2)$ starting at $x_1$ induce $\mc{F}$-equivalences. Likewise, if $f$ is $-\mc{F}$ inessential via a $-\mc{F}$ homotopy flow $H$ from $f$ to $id_X$ fixing $x_2$, then extensions $(H(x_1)*,c_{x_2}): (x_1,x_2)\to (fx_1,x_2)$ ending at $x_2$ induce $\mc{F}$-equivalences.
\end{enumerate}
\end{lemma}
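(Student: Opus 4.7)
The plan is to handle the four parts in turn. Parts (1) and (2) are closure properties of the class of $\mc{F}$-psp homotopy flows; part (3) is an unwinding of definitions; part (4) will then reduce to (3) via a short diagram chase in relation (\ref{eq:flow}).

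For (1), I would simply concatenate the $\alpha\mc{F}$ flow $F$ from $id_X$ to $f$ with the $\alpha\mc{F}$ homotopy $G$ from $f$ to $g$ as in Section \ref{sss:homflow}; the resulting d-homotopy is again an $\alpha\mc{F}$ flow ending at $g$, since every one of its level maps is, up to reparametrization, a level of $F$ or of $G$, each psp by hypothesis. For (2), given $\alpha\mc{F}$ flows $F, G$ ending at $f, g$, I would invoke the composition construction $[F,G]:X\times\vec{I}^2\to X$ and restrict to the directed boundary path $(0,0)\to(1,0)\to(1,1)$ in $\vec{I}^2$. This yields a simple d-homotopy from $id_X$ to $f\circ g$ whose level maps are either $F_s$ (psp by assumption on $F$) or $f\circ G_t$, the latter being psp as a composition of two psp maps, since each family $\mc{F}$ listed in Section \ref{sss:psp} is closed under composition. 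The past and neutral flavours are analogous, with an undirected boundary path in the neutral case. Part (3) is then immediate: on the morphism $f:(x,y)\to(fx,fy)$, the functor $dE\vec{\pi}_1(X)_*^*$ returns $\vec{T}(f):\vec{T}(X)_x^y\to\vec{T}(X)_{fx}^{fy}$, which is an $\mc{F}$-equivalence by the very definition of $\mc{F}$-psp.

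For (4), I would specialize the commutative square (\ref{eq:flow}) with the symbol $f$ there replaced by $id_X$ and the symbol $g$ there replaced by the given inessential map $f$, using $H$ as the $+\mc{F}$ flow from $id_X$ to $f$ that fixes $x_1$. Because $H(x_1)$ is then the constant path at $x_1=fx_1$, the right vertical morphism $H(x_1)*$ collapses to the identity at $(x_1, fx_2)=(fx_1, fx_2)$, so commutativity identifies the extension morphism $(c_{x_1},*H(x_2))$ with the endo-d-map morphism $f$ in $dE\vec{\pi}_1(X)$. Applying the functor $dE\vec{\pi}_1(X)_*^*$ and invoking (3) — inessential maps are in particular $\mc{F}$-psp — the extension acts on traces as $\vec{T}(f)$, which is an $\mc{F}$-equivalence. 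The $-\mc{F}$ case is entirely symmetric, using a past flow ending at $id_X$ and fixing $x_2$.

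The main obstacle is essentially bookkeeping in the extension category: because the first coordinate of a morphism in $E\vec{\pi}_1(X)$ is contravariant, one must be careful to match $H(x_1)*$ against the identity at $(fx_1, fx_2)=(x_1, fx_2)$ in the right orientation before the square of (\ref{eq:flow}) collapses to the asserted identification. Once that is done, (4) is an immediate consequence of (3), and the rest of the lemma consists of straightforward closure arguments.
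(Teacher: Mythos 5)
Your proof is correct and follows essentially the same route as the paper's (very terse) proof: the concatenation and $[F,G]$-composition constructions from Section \ref{sss:homflow} together with Definition \ref{def:psp} for (1)--(3), and specializing (\ref{eq:flow}) with one of the two maps equal to $id_X$ for (4). The only point worth noting is that your argument for (4) relies on reading ``keeping $x_1$ fixed'' as $H_t(x_1)=x_1$ for all $t$ (so that $H(x_1)=c_{x_1}$), which is indeed the intended hypothesis and exactly what makes the square of (\ref{eq:flow}) collapse as you describe.
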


\begin{proof}
The statements follow from the construction of compositions of (psp) homotopy flows in Section \ref{sss:homflow}, from Definition \ref{def:psp} and from (\ref{eq:flow}) in the case where one of the maps is the identity.

\end{proof}

  \begin{remark}\label{rem:homflow}
    \begin{enumerate}
\item In previous work (\cite{FGHR:04,GH:07}), attention was given to psp-properties of extension morphisms and, moreover, a pushout/pullback property encompassing that the psp property can be ``matched `` (on an individual basis) at start and end points. Asking for a psp homotopy flow means that there has to be a \emph{global} witness (the psp homotopy flow) for these psp properties. As the example in Section \ref{ss:Dubut} shows, it may be necessary to perturb start \emph{and} end point \emph{coherently together} to obtain ``constant'' path spaces (up to $\mc{F}$-equivalence). Hence, we are not going to compress the effects of inessential \emph{extension} morphisms but those of inessential \emph{d-maps}. 
\item The concepts ``psp homotopy flow'' and ``inessential d-map'' make also sense from a computer science applied perspective. A psp homotopy flow captures \emph{coherent} perturbations of all executions regardless of end points on a Higher Dimensional Automaton (HDA) in concurrency theory (cf \cite{Pratt:91,Glabbeek:06,FGHMR:16}).
    \end{enumerate}
  \end{remark}
  
\subsection{Localization and component categories}\label{ss:lc}
\subsubsection{Inessential subcategories: Definitions}\label{sss:isc} 
According to Lemma \ref{lem:homflow}, the $\mc{F}$-inessential d-maps on $X$ (cf Definition \ref{def:psp}.3) form submonoids $\Sigma^+_{\mc{F}}(X), \Sigma^-_{\mc{F}}(X)$, resp.\ $\Sigma^0_{\mc{F}}(X)$ (the neutral ones) of the monoid of all endo-d-maps on $X$. As such, they give rise to wide subcategories of $dE\vec{\pi}_1(X)$ that we call $\Sigma_{\mc{F}}^{\alpha}(X),\;\alpha = +,-,0$. Objects are always the sets $\vec{X^2}$, regardless the decorations $\alpha$ and $\mc{F}$.

A fourth flavour $\pm$ comes up as follows: Let\\ $\Sigma_{\mc{F}}^{\pm}(X)((x,y),(x',y')):=\{ f\in\Sigma_{\mc{F}}^+(X)((x,y),(x',y'))|\;\exists g\in\Sigma_{\mc{F}}^-(X)((x',y'),(x,y)):$\\ $\vec{T}(g\circ f): \vec{T}(X)_x^y\to\vec{T}(X)_x^y \mbox{ is } \mc{F-}\mbox{ homotopic to the identity map}\}$; ie   $\vec{T}(g\circ f)$
induces the identity map on all homotopy groups, on path components, resp.\ on a range of homology groups.

In the following (starting in Section \ref{sss:pcc}), we will concentrate on (mixed) subcategories $\Sigma_{\mc{F}}^{\alpha}E\vec{\pi}_1(X)\subset dE\vec{\pi}_1(X), \; \alpha = +, -, 0, \pm$, with
\begin{description}
\item[Objects] Pairs of points in $\vec{X^2}$.
\item[Morphisms] arise as finite compositions of inessential morphims in $\Sigma_{\mc{F}}^{\alpha}(X)$ with extension morphisms in $E\vec{\pi}_1(X)$ obeying to the relations (\ref{eq:dE}) and (\ref{eq:flow}); the latter for $f=id_X$. 
 \end{description}
As in Section \ref{sss:dext}, Remark \ref{rem:dext}, one should think of the monoid $\Sigma_{\mc{F}}^{\alpha}(X)$ as ``acting'' on the extension category $E\vec{\pi}_1(X)$ by endo-functors, this time leaving moreover the homotopy types of associated trace spaces invariant.

\subsubsection{Localization}\label{sss:loc}
Given a category $\mc{C}$ with subcategory $\Sigma$, the \emph{localization} \cite{Borceux:94} of $\mc{C}$ with respect to $\Sigma$ consists of a category $\mc{C}[\Sigma^{-1}]$ together with a functor $L: \mc{C}\to\mc{C}[\Sigma^{-1}]$ turning $\Sigma$-morphisms into isomorphisms, and such that a functor $F:\mc{C}\to\mc{D}$ factors uniquely through $L$ if and only if it sends all $\Sigma$-morphisms into isomorphisms. 

In practice, localization consists in adding formal inverses to the $\Sigma$-morphisms;
the morphisms in the localized category  are finite zig-zags consisting of 
morphisms in the category $\mc{C}$ and inverses of morphisms in the subcategory $\Sigma$. A prominent example defines the category $\mathbf{Ho-Top}$ as the result of localizing the subcategory whose morphisms $\Sigma=\mc{F}_{\infty}$ consists of all weak homotopy equivalences.

We will consider categories arising from a d-space $X$.  Of particular interest are the localized categories $\Sigma_{\mc{F}}^{\alpha}E\vec{\pi}_1(X)[\Sigma^{\alpha}_{\mc{F}}(X)^{-1}],$ $\alpha = +,-,0,\pm$: all morphisms arising from inessential d-maps (and no extension morphisms) are inverted. In these cases, we can draw several conclusions from (\ref{eq:dE}) and (\ref{eq:flow}) in Section \ref{sss:dext}:
\begin{lemma}\label{lem:loc}
\begin{enumerate}
\item The relations from \emph{(\ref{eq:dE})} and \emph{(\ref{eq:flow})} lead to reverse relations concerning morphisms $f^{-1}: (fx,fy)\to (x,y)$ for $f\in \Sigma_{\mc{F}}^{\alpha}(X)$ and $\mc{F}$ psp homotopy flows $H$ on $X$:
\begin{enumerate}
\item $(\sigma*,*\tau)\circ f^{-1}=f^{-1}\circ (f\sigma *,*f\tau)$.
\item $H(y)*\circ f^{-1}=*H(x): (fx,fy)\to (x,fy)$.
\end{enumerate}
\item Let $g$ denote an inessential d-map on $X$. Then $\vec{T}(g) $ indcuces bijections between morphism sets $E\vec{\pi}_1(X)((x,y),(x',y'))\to E\vec{\pi}_1(X)((gx,gy),(gx',gy'))$.
\end{enumerate}
\end{lemma}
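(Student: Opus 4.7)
The plan is to derive (1) by a formal algebraic manipulation---invert $f$ in each of the commuting diagrams (\ref{eq:dE}) and (\ref{eq:flow}) that generate the congruences defining $dE\vec{\pi}_1(X)$---and to obtain (2) from the $\mc{F}$-psp property of the homotopy flow witnessing the inessentiality of $g$, read off at the level of path components. For (1a), the commutativity of (\ref{eq:dE}) reads
\[
((f\sigma)*, *(f\tau)) \circ f = f \circ (\sigma *, *\tau),
\]
where the two $f$'s denote distinct morphisms $f: (x,y) \to (fx,fy)$ and $f: (x',y') \to (fx',fy')$. Both become isomorphisms in the localization, so composing on the right by $f^{-1}: (fx,fy)\to (x,y)$ and on the left by $f^{-1}: (fx',fy')\to (x',y')$ cancels the two instances of $f$ on the right-hand side and produces the desired identity $f^{-1} \circ ((f\sigma)*, *(f\tau)) = (\sigma *, *\tau) \circ f^{-1}$.

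For (1b), I would specialise (\ref{eq:flow}) to the psp homotopy flow $H$ witnessing that $f$ is inessential, taking $H_0 = id_X$ and $H_1 = f$. The left vertical arrow in the diagram becomes the identity, so the square collapses to a single commutation relation $*H(y) = H(x)* \circ f$ between morphisms $(x,y) \to (x, fy)$. Postcomposing with $f^{-1}$ then yields the claimed identity; the past case follows symmetrically by reversing the time parameter of the flow.

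For (2), Definition \ref{def:psp} provides an $\mc{F}$-psp homotopy flow with $H_1 = g$, so $\vec{T}(g): \vec{T}(X)_x^y \to \vec{T}(X)_{gx}^{gy}$ is an $\mc{F}$-equivalence on every non-empty trace space. Since $\mc{F}_0 \subseteq \mc{F}$, this map is a bijection on path components. Under the identification $\vec{\pi}_1(X)_x^y = \pi_0(\vec{T}(X)_x^y)$ from Section \ref{sss:piX}, the induced map on the fundamental category is a bijection $\vec{\pi}_1(X)_x^y \to \vec{\pi}_1(X)_{gx}^{gy}$. Since morphism sets in the extension category decompose as products $E\vec{\pi}_1(X)((x,y),(x',y')) = \vec{\pi}_1(X)_{x'}^x \times \vec{\pi}_1(X)_y^{y'}$, applying the bijection coordinatewise furnishes the claimed bijection on morphism sets.

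The main difficulty is bookkeeping rather than substance: each occurrence of $f$ has a distinct source and target in the localized category, so one must be careful when cancelling. Once the diagrams (\ref{eq:dE}) and (\ref{eq:flow}) are read in the localized setting, everything follows from the universal property of localization and the $\pi_0$-description of $\vec{\pi}_1$; no further categorical machinery should be needed.
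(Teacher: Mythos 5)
Your proof is correct and follows the paper's own route: part (1) is obtained, exactly as in the paper, by inverting the two instances of $f$ in the defining congruences (\ref{eq:dE}) and (\ref{eq:flow}) (the latter specialised to the flow with $H_0=id_X$, $H_1=f$, resp.\ its past analogue), and part (2) is the paper's computation $E\vec{\pi}_1(X)((x,y),(x',y'))=\pi_0\vec{T}(X)_{x'}^{x}\times\pi_0\vec{T}(X)_{y}^{y'}\cong\pi_0\vec{T}(X)_{gx'}^{gx}\times\pi_0\vec{T}(X)_{gy}^{gy'}=E\vec{\pi}_1(X)((gx,gy),(gx',gy'))$, using that the psp property makes $\vec{T}(g)$ a bijection on path components applied coordinatewise. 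Two cosmetic remarks: in (1b) the cancellation is a \emph{pre}composition with $f^{-1}$ (composition at the source side), not a postcomposition, and the relation you actually derive, $*H(y)\circ f^{-1}=H(x)*\colon (fx,fy)\to(x,fy)$, is the placement of the asterisks consistent with the conventions of (\ref{eq:extmor}) and (\ref{eq:flow}), which is the intended reading of the statement.
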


\begin{proof}
\begin{enumerate}
\item follows directly from (\ref{eq:dE}) and (\ref{eq:flow}) in Section \ref{sss:dext}.
\item $E\vec{\pi}_1(X)((x,y),(x',y'))=\pi_0\vec{T}(X)_{x'}^x\times\pi_0\vec{T}(X)_y^{y'}\cong\pi_0\vec{T}(X)_{gx'}^{gx}\times\pi_0\vec{T}(X)_{gy}^{gy'}$\\ $=E\vec{\pi}_1(X)((gx,gy),(gx',gy'))$.
\end{enumerate}
\end{proof}

By the universal property characterizing localization, for $\mc{F}=\mc{F}_{\infty}$, the functors $\vec{T}(X)_*^*$ into $\mathbf{Top}$ and $\vec{\pi}_1(X)_*^*$ into $\mathbf{Ho-Top}$ (cf Section \ref{sss:extcat}) extend to give rise to functors from the localized categories into $\mathbf{Ho-Top}$ for which we will use the same notation. Similarly, for a wider class of equivalence systems, for the functors with target category $\mathbf{Ab}$ arising from composing with eg homology or $\mathbf{Set}$ from taking connected components.

\subsubsection{Component categories}\label{sss:cc} 
Going one step further, one may form from a category $\mc{C}$ with a subcategory $\Sigma$ a quotient category $\mc{C}/\Sigma$ together with a quotient functor $Q: \mc{C}\to \mc{C}/\Sigma$ sending morphisms in $\Sigma$ to \emph{identities} and such that a functor $F:\mc{C}\to\mc{D}$ factors uniquely through $Q$ if and only if it sends all $\Sigma $-morphisms to identities \cite{BBP:99,FGHMR:16}. The quotient category has as objects the \emph{path components} of $\mc{C}$-objects with respect to paths arising by composing morphism in $\Sigma$ and $\Sigma^{-1}$.  Morphisms in the quotient category are represented by morphisms in the category $\mc{C}[\Sigma^{-1}]$, ie by concatenations of morphisms in the original category and inverses of $\Sigma$-morphisms. Representatives can be composed if just their target, resp.\ source are situated in the same component (plug in an arbitrary $\Sigma$-morphism to obtain a morphism representing the composition); cf eg \cite{BBP:99,FGHMR:16} for details. In particular, every $\Sigma$-morphism in $\mc{C}$ represents an identity in the component category $\mc{C}/\Sigma$. 
 
By the universal properties, the quotient functor factors over the localization functor giving rise to a functor $\bar{Q}: \mc{C}[\Sigma^{-1}]\to \mc{C}/\Sigma$. This functor is not always  an \emph{equivalence} of categories; it is so if $\mc{C}$ is loop-free, ie if it contains only identities as endomorphisms;  cf \cite{GH:07,FGHMR:16}.

\subsubsection{Pair component categories}\label{sss:pcc}
Since objects are interpreted as path components, we will use the notation $\vec{\pi}_0(\mc{C};\Sigma)$ $=\mc{C}/\Sigma$ for relevant categories of pairs in our context. 

In the following, we describe a number of interesting pair component categories; in all of them, pairs $(x,y)$ and $(x',y')$ in $\vec{X^2}$ give rise to the same object (then called component) if and only if there exists a zig-zag $(x,y)\to (x_1,y_1)\leftarrow (x_2,y_2)\to\cdots\to (x_n,y_n)=(x',y')$ of $\Sigma$-morphisms. For $\Sigma =\Sigma^{\alpha}_{\mc{F}},\;\alpha =+,-,0,\pm$, those are induced by zig-zags of psp homotopies flows joining them. We will call the components in these categories \emph{future, past, neutral} resp.\ \emph{total} components.

By far the most important pair component categories arise as categories of components $\mc{C}/{\Sigma}$ with $\mc{C}=\Sigma_{\mc{F}}^{\alpha}E\vec{\pi}_1(X)$ and $\Sigma = \Sigma_{\mc{F}}^{\alpha}(X)$. As discussed above, objects (components) correspond to path components among reachable pairs along psp homotopy flows. Extension morphisms are identified if equivalent up to an inessential endo-d-map. The key relations go back to (\ref{eq:dE}) and (\ref{eq:flow}) -- for $f=id_X$ -- in Section \ref{sss:dext}:

For an inessential endo-d-map $f$ on $X$ and d-homotopy classes $\sigma , \tau$, the morphisms 
$(\sigma *, *\tau ):(x,y)\to (x',y')$ and $((f\circ\sigma)*, *(f\circ\tau)) :(fx,fy)\to (fx',fy')$ represent the same morphism in the component category.  Likewise, cf Lemma \ref{lem:loc}.2, morphisms $(\sigma' *, *\tau' ): (fx,fy)\to (fx',fy')$ arise from morphisms $(\sigma *, *\tau ): (x,y)\to (x',y')$ under $f$.

In the remaining part of the paper, we will use the shorter notation  $\vec{\pi}_0(X;\alpha , \mc{F})_*^*$ $=\vec{\pi}_0(\Sigma_{\mc{F}}^{\alpha}E\vec{\pi}_1(X);\Sigma_{\mc{F}}^{\alpha}(X))$, $\alpha =+,-,0,\pm$, for the \emph{future, past, neutral resp.\ total pair component categories}. Inclusion induces functors
\[\xymatrix{\vec{\pi}_0(X;\pm,\mc{F})_*^*\ar[r]\ar[d] & \vec{\pi}_0(X;+,\mc{F})_*^*\ar[d]\\
\vec{\pi}_0(X;-,\mc{F})_*^*\ar[r] & \vec{\pi}_0(X;0,\mc{F})_*^*
}.\]
These are onto on objects and, since every morphism arises from an extension, full on morphisms.

For $\mc{C}=\mathbf{Top}$ and $\Sigma=\mc{F}_{\infty}$, the quotient category $\mc{C}/\Sigma$ is the naive category of homotopy types. By the universal property characterizing quotient categories, the functors $\vec{T}(X)_*^*$ factors to give rise to functors from $\vec{\pi}_0(X;\alpha , \mc{F})_*^*$ to this category of homotopy types. Likewise for $\mc{F}_0, \mc{C}=\mathbf{Sets}$ and $\Sigma$ consisting of bijections; or $\mc{H}_{A;k}, \mc{C}=\mathbf{A-mod}$ and $\Sigma$ consisting of isomorphisms.

\begin{remark}
There are several variations on this theme that we are not going to follow up:
\begin{enumerate}
\item The full category $\mc{C}=dE\vec{\pi}_1(X)$ with the same subcategories $\Sigma = \Sigma_{\mc{F}}^{\alpha}(X)$; resulting in a pair component with the same objects as above but with an additional ``action'' of \emph{essential} endo-d-maps (the quotient of morphisms in $dE(X)$with respect to $\Sigma_{\mc{F}}^{\alpha}(X)$ and its inverses; see 3) below).
\item In both cases: Pair component categories with departure the extension category $E\vec{T}(X)$ of the trace category instead of that of the fundamental category.
\item Forgetting extensions, and considering $\mc{C}=d(X), \Sigma =\Sigma_{\mc{F}}^{\alpha}(X)$, morphisms given by an endo d-map $f$ and its compositions  $g\circ f$ and $f\circ g$ with an inessential such map $g$ are identified in the quotient category. 
\end{enumerate}
\end{remark}

\section{Properties of homotopy flows and components}\label{s:homflowprop}
\subsection{Components are path-connected}\label{ss:pathcon}
Any homotopy flow on a d-space $X$ yields, when restricted to a point or a pair of points, a path connecting the ends. This elementary observation implies:
\begin{lemma}\label{lem:pathcon}
Any component $C\subset\vec{X^2}$ is path-connected. For $\alpha = +, -, \pm$, two elements of the same component can be connected by a zig-zag of (pairs of) d-paths.
\end{lemma}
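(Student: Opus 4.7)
The plan is to unfold the definition of a pair component given in Section~\ref{sss:pcc} and then extract the required paths directly from the psp homotopy flows witnessing the underlying inessential d-maps.

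First, I would reduce to a single step. Two pairs $(x,y)$ and $(x',y')$ lie in the same $\alpha\mc{F}$-component iff they are joined by a finite zig-zag
$(x,y) \leftrightarrow (x_1,y_1) \leftrightarrow \cdots \leftrightarrow (x_n,y_n)=(x',y')$
whose arrows are induced by morphisms $f_i\in\Sigma^{\alpha}_{\mc F}(X)$, i.e., $\alpha\mc F$-inessential d-maps evaluated on the endpoints. By concatenating paths step by step it suffices to treat one elementary arrow $f:(x,y)\to (fx,fy)$ with $f$ an $\alpha\mc F$-inessential d-map.

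Second, I would exhibit the path. By Definition \ref{def:psp}(3) there exists an $\alpha\mc F$-psp homotopy flow $H: X\times J \to X$, where $J=\vec{I}$ if $\alpha\in\{+,-,\pm\}$ and $J=I$ if $\alpha=0$, with $\{H_0,H_1\}=\{id_X,f\}$. Set
\[
\gamma: J \to X\times X, \qquad \gamma(t)=(H_t(x),H_t(y)).
\]
This is manifestly continuous. To verify that $\gamma$ lands in $\vec{X^2}$, pick any d-path $\delta\in \vec{P}(X)_x^y$ (which exists since $(x,y)\in\vec{X^2}$); because each level map $H_t$ is a d-map, $H_t\circ\delta \in \vec{P}(X)_{H_t(x)}^{H_t(y)}$, so $(H_t(x),H_t(y))\in\vec{X^2}$ for every $t\in J$. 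Concatenating the $\gamma$'s coming from the successive steps in the zig-zag gives a continuous path in $\vec{X^2}$ from $(x,y)$ to $(x',y')$, proving the first assertion.

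Third, for $\alpha\in\{+,-,\pm\}$ I would upgrade the paths to pairs of d-paths. Since $H$ is then a (future or past) d-homotopy, every coordinate map $s\mapsto H_s(x)$ and $s\mapsto H_s(y)$ is, by definition, a d-path (respectively its reverse parametrization for $\alpha=-$). Thus $\gamma$ is a pair of d-paths (resp.\ past d-paths); assembling the $n$ elementary steps yields the required zig-zag of pairs of d-paths. The $\pm$ case is automatic since such morphisms are in particular future-inessential. The only genuinely verifiable point is the containment $\gamma(J)\subset\vec{X^2}$ addressed above; everything else is a direct unpacking of the definitions, and there is no substantial obstacle.
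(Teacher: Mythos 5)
Your proposal is correct and is essentially the paper's own argument: the paper proves the lemma by the single observation that restricting the (psp) homotopy flows witnessing the $\Sigma^{\alpha}_{\mc{F}}$-zig-zag to the pair of endpoints yields (pairs of d-)paths, which is exactly what you spell out. Your extra verifications (continuity, that $\gamma$ lands in $\vec{X^2}$, and the d-path nature of the coordinates for $\alpha=+,-,\pm$) just make explicit what the paper leaves implicit.
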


\subsection{Future components are future connected}
The following definition adapts similar ones from \cite{FGHR:04,Ziemianski:18} to the pair setting:
\begin{definition}
A subset $C\subset\vec{X^2}\subset X\times X$ is called \emph{future connected} if, for any two pairs $(x_1,y_1), (x_2,y_2)\in C$, there exist $(x,y)\in C$ and d-paths $\alpha_i, \beta_i\in\vec{P}(X),\; i=1,2,$  such that $(\alpha_i(t),\beta_i(t))\in C$ for all $t$, $\alpha_i(0)=x_i, \beta_i(0)=y_i, \alpha_i(1)=x, \beta_i(1)=y$.
\end{definition}
Past connectivity is defined similarly. We omit the decoration $\mc{F}$.

\begin{proposition}
Future components are future connected. Past components are past connected. Total components are both.
\end{proposition}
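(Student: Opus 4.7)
The key observation that converts inessentiality data into d-paths is the following: if $f \in \Sigma^+_{\mc{F}}(X)$ with $fx=x'$, $fy=y'$ is witnessed by a future homotopy flow $H$ (with $H_0=id_X$, $H_1=f$), then $t \mapsto (H_t(x), H_t(y))$ is a pair of d-paths from $(x,y)$ to $(x',y')$, and for every $t \in I$ the rescaled restriction $(x,s) \mapsto H(x,ts)$ is itself a future homotopy flow from $id_X$ to $H_t$, so $H_t \in \Sigma^+_{\mc{F}}(X)$ and $(H_t(x),H_t(y))$ lies in the same future component as $(x,y)$ (using that $\Sigma^+_{\mc{F}}(X)$ is a submonoid, Lemma \ref{lem:homflow}(2)). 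Dually, a past inessential $f$ witnessed by a past flow $H$ (with $H_0=f$, $H_1=id_X$) produces d-paths $(H_t(x), H_t(y))$ running from $(fx,fy)$ back to $(x,y)$; so in the past case the morphism direction is opposite to the d-path direction.

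For future components, I would argue by induction on the length $n$ of a zig-zag of $\Sigma^+_{\mc{F}}(X)$-morphisms joining $(x_1,y_1)$ to $(x_2,y_2)$. The case $n\le 1$ is handled directly by the observation above (using the flow paths for the single arrow and constants on the other side). For the inductive step, examine the first arrow. If it is a forward arrow $(x_1,y_1)\xrightarrow{f_1}(a_1,b_1)$, apply the inductive hypothesis to the length-$(n{-}1)$ zig-zag from $(a_1,b_1)$ to $(x_2,y_2)$ to obtain a common upper pair $(x,y)$ with d-paths $\alpha,\beta$ from $(a_1,b_1)$ and $\alpha',\beta'$ from $(x_2,y_2)$; then prepend to $\alpha,\beta$ the flow d-paths for $f_1$ starting at $(x_1,y_1)$. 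If the first arrow is a backward arrow $(x_1,y_1)\xleftarrow{g_1}(a_1,b_1)$ with $g_1 a_1=x_1$, $g_1 b_1=y_1$, the inductive hypothesis again supplies $(x',y')$ and paths $\alpha,\beta,\alpha',\beta'$, but here I replace the common upper pair by $(x,y):=(g_1 x', g_1 y')$: push the $(a_1,b_1)$-side paths forward under $g_1$ (they remain d-paths since $g_1$ is a d-map, and their endpoints become $(g_1 a_1, g_1 b_1)=(x_1,y_1)$), and extend the $(x_2,y_2)$-side paths by the flow d-paths $t \mapsto G_1(x',t)$ and $t \mapsto G_1(y',t)$ of $g_1$. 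Closure of $\Sigma^+_{\mc{F}}(X)$ under composition (Lemma \ref{lem:homflow}(2)) together with the observation of the first paragraph ensures that every intermediate pair stays in the future component.

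The past case is handled by the formally dual induction, using past flows and, in the backward sub-case, pulling the common pair back by the opposing morphism. For the total case, every $f\in\Sigma^\pm_{\mc{F}}(X)$ is simultaneously future inessential and paired with an opposing past inessential morphism, so any $\Sigma^\pm$-zig-zag can be read both as a $+$-zig-zag and, by flipping every arrow, as a $-$-zig-zag. Applying the future construction to the former gives future connectivity of the total component and applying the past construction to the latter gives past connectivity; in each case the auxiliary pair built during induction arises by applying $\Sigma^\pm$-morphisms to pairs already known to lie in the total component, so stays there. I expect the main obstacle to be the backward-arrow subcase: one must spot that the inductively constructed common pair has to be \emph{moved} by the opposing morphism (rather than reused verbatim) while the other side is padded by the flow, and handling the direction reversal gracefully in the dual past argument requires equal care.
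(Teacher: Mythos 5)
Your argument is correct and is essentially the paper's own proof: your backward-arrow step (push the inductively obtained connecting paths forward under $g_1$ and pad the other side with the flow of $g_1$) is exactly the paper's composition $[G,H]$ of the two psp flows evaluated along the two boundary d-paths of $\vec{I}^2$ in the diverging zig-zag case, with the same observation that the level maps $H_t$ and the images under inessential maps keep intermediate pairs in the component, and the paper likewise settles the general zig-zag by induction. The past and total cases are treated just as briefly in the paper (via the reverse d-structure, resp.\ reading $\Sigma^{\pm}$-zig-zags through $\Sigma^{+}$ and $\Sigma^{-}$), so your treatment matches it.
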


\begin{proof}
We give a proof of future connectivity; the other cases follow by considering the reverse directed space. There are essentially two zig/zag situations to consider (arrows indicate $+\mc{F}$-psp homotopy flows $G, H: X\times\vec{I}\to X$ by their restrictions to pairs of points): 
\[\xymatrix{& (x,y) & & (x_1,y_1) & & (x_2,y_2)\\(x_1,y_1)\ar[ur] & & (x_2,y_2)\ar[ul] & & (\bar{x},\bar{y})\ar[ul]\ar[ur] & }\]
In the case on the left, the d-paths joining $(x_i,y_i)$ with $(x,y)$ within $C$ can be chosen as restrictions of $G$ and $H$ to $(x_i,y_i)$. 

In the case on the right, there is a psp homotopy flow $G:X\times\vec{I}\to X$ with $G_1(\bar{x})=x_1, G_1(\bar{y})=y_1$ and a psp d-homotopy flow $H:X\times\vec{I}\to X$ with $H_1(\bar{x})=x_2, H_2(\bar{y})=y_2$. In the notation from Section \ref{sss:homflow}, let $x:=[G,H]((1,1);\bar{x})=G_1(H_1(\bar{x}))$ and $y:=[G,H]((1,1);\bar{y})=G_1(H_1(\bar{y}))$.  The dipaths $(\alpha_1(t), \beta_1(t))=[G,H]((1,t);(\bar{x},\bar{y}))$ connect $(x_1,y_1)$ with $(x,y)$ within $C$.
The dipaths $(\alpha_2(t), \beta_2(t))=[G,H]((t,1);(\bar{x},\bar{y}))$ connect $(x_2,y_2)$ with  $(x,y)$ within $C$.

The general zig-zag situation follows by an inductive argument.
 \end{proof}

\subsection{Regions fixed by (psp) homotopy flows}
\subsubsection{Pre-cubical sets. Cubical complexes}\label{sss:pcs}
\begin{definition}\label{df:pcs}
\begin{enumerate}
\item A \emph{pre-cubical set} $X$ (also called a $\Box$-Set) is a sequence of disjoint sets $X_n,\; n>0$, equipped with \emph{face maps} $d_i^{\alpha}: X_n\to X_{n-1},\; \alpha\in\{+,-\}, 1\le i\le n$, satisfying the pre-cubical relations: $d_i^{\alpha}d_j^{\beta}=d_{j-1}^{\beta}d_i^{\alpha}$ for $i<j$.\\
Elements of $K_n$ are called $n$-cubes, those of $K_0$ are called vertices.
\item The \emph{geometric realization} of a pre-cubical set $X$ is the d-space
\[|X|=\bigcup_{n\ge 0}X_n\times \vec{I}^n/_{[d_i^{\alpha}(c),x]\sim [c,\delta_i^{\alpha}(x)]}\] 
with $\delta_i^{\alpha}(x_1,\dots ,x_{n-1})=(x_1,\dots, x_{i-1}, s_{\alpha}, x_i,\dots , x_{n-1})$ and $s_{\alpha}= 0$ (resp.\ $1$) for $\alpha =-$ (resp.\ $\alpha =+$).
\item A path $p\in\vec{P}(|X|)$ is directed if there are $0=t_0<t_1<\dots <t_k=1$, cubes $c_i\in X_{n_i}$ and directed paths $p_i:[t_{i-1},t_i]\to \vec{I}^{n_i}$ with $p(t)=[c_i,p_i(t)]$ for $t\in [t_{i-1},t_i]$.
\end{enumerate}
\end{definition}

Pre-cubical sets are the underlying structure of a Higher-Dimensional Automaton \cite{Pratt:91,Glabbeek:91,Glabbeek:06,FGHMR:16}; those have moreover a coherent labelling of the 1-cubes.
In this section, we consider homotopy flows on the geometric realization $|X|$ of a pre-cubical set $X$, also called a \emph{cubical complex}. We will often just write $X$ for $|X|$.

\subsubsection{Homotopy flows and components}
Let $Y\subset X$ denote a subset of a d-space $X$. Its \emph{past} $\downarrow\! Y:= \{x\in X|\;\exists y\in Y: \vec{P}(X)_x^y\neq\emptyset\}$ consists of all elements in $X$ that can reach an element in $Y$. Its \emph{future} $\uparrow\! Y:=\{x\in X|\;\exists y\in|c|: \vec{P}(X)_y^x\neq\emptyset\}$ consists of all elements in $X$ that can be reached from an element in $Y$. Both contain $Y$.

A cube $c$ in a finite dimensional cubical complex $X$ is called a \emph{future branch cube}, resp.\  a \emph{past branch cube} \cite{Raussen:12} if there exist \emph{more than one} maximal cube containing it as a bottom boundary cube (iterated $d^-_*$), resp.\ top boundary cube (iterated $d^+_*$).

\begin{proposition}\label{prp:hfc}
\begin{enumerate}
\item For every (neutral) homotopy flow on $X$ and every future branch cube $c$ there exists $T>0$ such that $H(|c|\times [0,T])\subseteq\downarrow\! |c|$.
\item For every (neutral) homotopy flow on $X$ and every past branch cube $c$ there exists $T>0$ such that $H(|c|\times [0,T])\subseteq\uparrow\! |c|$.
\item A future (resp.\ past) homotopy flow on $|X|$ preserves future (resp.\ past) branch cubes.
\end{enumerate}
\end{proposition}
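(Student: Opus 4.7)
The approach is to combine continuity of $H$ on the compact slice $|c|\times\{0\}$ with the local combinatorial structure at a branch cube. Parts (1) and (2) are order-duals of one another, and (3) is deduced from (1) (resp.\ (2)) by exploiting the d-path property of a future (resp.\ past) homotopy flow.

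For part (1), I would fix two distinct maximal cubes $D_1,D_2$ that contain $c$ as an iterated bottom face. First, using continuity of $H$ and compactness of $|c|$, choose $T>0$ and an open neighbourhood $U$ of $|c|$ in $|X|$ with $H(|c|\times[0,T])\subseteq U$, shrinking $U$ so that it meets only cubes whose closures touch $|c|$. The key step is then a boundary approximation: fix $x\in|c|$ and, for each $i$, pick a sequence $y_n^{(i)}\in\mathrm{relint}|D_i|$ with $y_n^{(i)}\to x$. Since $D_i$ is maximal, $\mathrm{relint}|D_i|$ is open in $|X|$, and continuity of $H$ forces $H_s(y_n^{(i)})\in\mathrm{relint}|D_i|$ for $s$ in a small interval around $0$. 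Passing to the limit $y_n^{(i)}\to x$ gives $H_t(x)\in|D_i|$ for every $i$, hence $H_t(x)\in|D_1|\cap|D_2|$. A local combinatorial analysis then identifies $|D_1|\cap|D_2|\cap U$ as (a subcomplex of) $|c|\subseteq\downarrow\!|c|$.

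Part (2) is the order-dual: for a past branch cube $c$ with distinct maximal cubes $D_1,D_2$ having $c$ as an iterated top face, the same approximation scheme places $H_t(|c|)$ inside $|D_1|\cap|D_2|$ near $|c|$, and the branching structure now puts that intersection in $\uparrow\!|c|$.

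For part (3), I take $H$ to be a future homotopy flow. Each $H(x,-)$ is then a d-path, so $x\le H_t(x)$ in the d-path preorder. Combined with (1) this gives, for $t\in[0,T]$, some $y\in|c|$ with $x\le H_t(x)\le y$, and cube comparability forces $H_t(x)\in|c|$; hence $H_t(|c|)\subseteq|c|$ on $[0,T]$. To extend to all $t\in I$, consider $S:=\{t\in I\mid H_s(|c|)\subseteq|c|\text{ for all }s\in[0,t]\}$, which is non-empty and closed; I would show it is open by replaying the approximation argument with $H_{t_*}$ in place of the identity at any $t_*\in S$, using that $H_{t_*}(|c|)\subseteq|c|$ together with continuity of the shifted family. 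Connectedness of $I$ then yields $S=I$; the past case is symmetric.

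The main obstacle I expect is the local combinatorial step in parts (1) and (2): proving that $|D_1|\cap|D_2|$, restricted to a sufficiently small neighbourhood of $|c|$, really sits in $\downarrow\!|c|$ (resp.\ $\uparrow\!|c|$). This amounts to ruling out common faces of $D_1$ and $D_2$ that meet $|c|$ and extend it upwards (resp.\ downwards), which requires a careful inspection of the face poset of the pre-cubical set around $c$. A secondary obstacle is the bootstrapping step at the end of (3): in the presence of non-trivial directed loops in $X$, one may need to shrink $U$ further to exclude d-path detours that leave $|c|$ into $|D_i|\setminus|c|$ and return to $|c|$ through the neighbourhood.
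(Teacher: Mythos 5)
Your argument for parts (1) and (2) has a genuine gap, and it occurs at exactly the step you flag as the ``key step''. First, the limit argument is invalid: continuity of $H$ gives, for each fixed $y\in\mathrm{relint}|D_i|$, only an interval $[0,s(y)]$ on which $H_s(y)$ stays in that open cell, and $s(y)$ may shrink to $0$ as $y_n^{(i)}\to x\in|c|\subseteq\partial|D_i|$; so for a fixed $t>0$ you cannot assert $H_t(y_n^{(i)})\in\mathrm{relint}|D_i|$ for all large $n$, and the conclusion $H_t(x)\in|D_1|\cap|D_2|$ does not follow. Second, that intermediate conclusion is in fact false in general: the proposition only claims containment in $\downarrow\!|c|$, which is strictly larger. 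For a graph with one edge entering a vertex $v$ and two edges $D_1,D_2$ leaving it, $c=v$ is a future branch cube and a \emph{neutral} homotopy flow may slide $v$ backwards along the incoming edge; the image lies in $\downarrow\!|c|$ but not in $|D_1|\cap|D_2|=\{v\}$. Most fundamentally, your proof of (1)--(2) uses only continuity and the openness of maximal open cells, and never the fact that each level map $H_t$ is a d-map. With continuity alone the statement is false (an ordinary homotopy of the identity can immediately push points of $|c|$ strictly upward into one maximal cube and hence out of $\downarrow\!|c|$), so no purely topological approximation argument can close this gap.

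The paper's proof is where directedness enters. Compactness of $|c|$ first yields $T>0$ with $H(|c|\times[0,T])\subseteq\downarrow\! C$, where $C$ is the union of all cubes having $c$ as a lower face. If some $x\in|c|$ had $H_t(x)\in|d|\setminus|c|$ for a cube $d$ above $c$, one uses the branch hypothesis to choose a maximal cube $e$ containing $c$ but not $d$ as a lower face, a point $y\in|e|\setminus|d|$ close to $x$, and a d-path $\sigma$ from $x$ to $y$; since $H_t$ is a d-map, $H_t\circ\sigma$ is a d-path from $H_t(x)\in|d|\setminus|c|$ to $H_t(y)\in|e|\setminus|d|$, which is impossible for small $t$ --- contradiction. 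Part (2) follows by reversing the d-structure, and (3) by combining (1)/(2) with the directedness of the tracks $H(x,-)$ of a future (resp.\ past) flow; your outline for (3) (directedness of the tracks plus a connectedness bootstrap in $t$) is essentially sound, but it rests on (1)--(2), which your current argument does not establish.
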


\begin{proof}
\begin{enumerate}
Let $C=\bigcup |d|$, the union of all cubes $d$ that have $c$ as a lower face. For every homotopy flow on $X$, since $|c|$ is compact, there exists $T>0$ such that $H(|c|\times [0,T])\subseteq\downarrow\! C$. Assume there exists $x=[c; (t_1,\dots t_k)]\in |c|$ and $0<t\le T$ such that $H_t(x)\in |d|\setminus |c|$ for $0<t\le T$ for a cube $d$ containing $c$ as a lower face. Let $e$ denote a maximal cube containing $c$ but not $d$ as a lower face.
Let $y=[e,(t_1,\dots ,t_k,\varepsilon_1,\dots \varepsilon_{n-k})]$ be contained in $|e|\setminus |d|, \varepsilon_i>0$; we assume without restriction that $c$ occupies the first $k$ coordinates. Observe that there exists a d-path $\sigma$ from $x$ to $y$. The d-path $H_t(\sigma )$ starts in $H_t(x)\in|d|\setminus |c|$ and ends in $H_t(y)$ which is contained in $|e|\setminus |d|$ for small $t$. Contradiction!\\
\item The same reasoning applied to the reverse d-structur on $X$ (d-paths replaced by reverse d-paths) yields the result for past branch cubes.
\item is an immediate consequence of 1.\ and 2.
\end{enumerate}
\end{proof}

The following result is straightforward; it turns out to be very useful in Section \ref{s:ex}:
\begin{lemma}\label{lem:invariant}
Let $Y\subseteq X$ denote a subspace of a d-space $X$.
\begin{enumerate}
\item A d-map $f: X\to X$ (resp.\ a dihomotopy $H: X\times I\to X$) that keeps $Y$ invariant ($f(Y)\subseteq Y$, resp.\ $H(Y\times I)\subseteq Y$), keeps also invariant
its closure $\bar{Y}$, its past $\downarrow\! Y$ and its future $\uparrow\! Y$.
\item A future (resp.\ past) homotopy flow $H: X\times\vec{I}\to X$ keeping $Y$ invariant keeps also invariant the complement $X\setminus\downarrow\! Y$ of its past (resp.\ the complement ($X\setminus\uparrow\! Y$) of its future.
\item Intersections of invariant sets are invariant.
\end{enumerate}
\end{lemma}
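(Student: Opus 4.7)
The plan is to dispatch each of the three items largely from the definitions: continuity for the closure claim, concatenation of d-paths for the past/future claims, a short contradiction argument for item~(2), and an elementary set-theoretic inclusion for item~(3). Only item~(2) requires a small argument; the rest are routine.

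For item~(1), let $f : X \to X$ be a d-map with $f(Y) \subseteq Y$. Continuity gives $f(\bar Y) \subseteq \overline{f(Y)} \subseteq \bar Y$. For the past, take $x \in \downarrow\! Y$, so there is a d-path $\sigma \in \vec{P}(X)$ from $x$ to some $y \in Y$; since $f$ is a d-map, $f \circ \sigma$ is a d-path from $f(x)$ to $f(y) \in Y$, proving $f(x) \in \downarrow\! Y$. The future case is symmetric. For a dihomotopy $H : X \times I \to X$ with $H(Y \times I) \subseteq Y$, one observes (as noted in the paper) that every level map $H_t : X \to X$ is a d-map, and it satisfies $H_t(Y) \subseteq Y$; applying the d-map case to each $H_t$ yields invariance of $\bar Y$, $\downarrow\! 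Y$, and $\uparrow\! Y$.

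For item~(2), suppose $H : X \times \vec{I} \to X$ is a future homotopy flow with $H(Y \times \vec{I}) \subseteq Y$, and let $x \in X \setminus \downarrow\! Y$. Assume for contradiction that $H_t(x) \in \downarrow\! Y$ for some $t \in I$; then there is a d-path $\sigma$ from $H_t(x)$ to some $y \in Y$. Because $H$ is a future homotopy flow, the restriction of $H(x,-)$ to $[0,t]$ (suitably reparametrised) is a d-path from $x = H_0(x)$ to $H_t(x)$. Concatenating with $\sigma$ produces a d-path from $x$ to $y \in Y$, contradicting $x \notin \downarrow\! Y$. Hence $H_t(x) \in X \setminus \downarrow\! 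Y$. The past-flow statement, with $\uparrow\! Y$ replacing $\downarrow\! Y$, follows by the same argument applied to the reverse d-structure.

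Item~(3) is immediate: if $f(Y_i) \subseteq Y_i$ for each $i$ in some index set, then $f\bigl(\bigcap_i Y_i\bigr) \subseteq \bigcap_i f(Y_i) \subseteq \bigcap_i Y_i$, and analogously for a dihomotopy levelwise. The only mildly non-trivial step in the whole proof is the concatenation in item~(2), and the key point there is the defining feature of a homotopy flow (as opposed to a general dihomotopy): each trajectory $H(x,-) : \vec{I} \to X$ is itself a d-path, which is exactly what is needed to glue onto $\sigma$ to reach $Y$ from $x$.
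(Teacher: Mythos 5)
Your proof is correct and is exactly the routine argument the paper has in mind: Lemma \ref{lem:invariant} is stated there without proof as ``straightforward'', and your treatment (continuity for $\bar{Y}$, post-composing $f$ with d-paths for $\downarrow\! Y$ and $\uparrow\! Y$, the levelwise reduction for dihomotopies, the trajectory-plus-concatenation contradiction for item (2), and the elementary inclusion for item (3)) is the intended one. One small remark: in item (2) you never actually use the hypothesis $H(Y\times\vec{I})\subseteq Y$, since $H_0=id_X$ already makes each trajectory a d-path from $x$ to $H_t(x)$; so your argument in fact shows that \emph{any} future homotopy flow preserves $X\setminus\downarrow\! Y$, a harmless strengthening of the stated lemma.
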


\begin{corollary}\label{cor:downarrow}
\begin{enumerate}
\item Any future (resp.\ past) homotopy flow $H: X\times\vec{I}\to X$ fixes a future (resp.\ past) branch point (=$0$-cell) $b\in X_0$.
\item Any future homotopy flow preserves the past $\downarrow b=\{x\in X| \vec{P}(X)_x^b\neq\emptyset\}$ of a future branch point $b$ and its complement  
$X\setminus\downarrow b$. 
\item Any past homotopy flow preserves the future $\uparrow b=\{x\in X| \vec{P}(X)_b^x\neq\emptyset\}$ of a past branch point $b$ and its complement  
$X\setminus\uparrow b$.
\item For any disjoint collections $B_1, B_2$ of future branch points, any future homotopy flow preserves subsets of the form $\bigcap_{b_i\in B_1}\downarrow\! b_i\cap\bigcap_{b_j\in B_2}(X\setminus\downarrow\! b_j).$
\item Similarly for past homotopy flows and collections of past branch points.
\end{enumerate}
\end{corollary}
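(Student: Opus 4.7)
The plan is to handle~(1) directly using Proposition~\ref{prp:hfc} and then derive (2)--(5) from Lemma~\ref{lem:invariant} in a few lines.

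For~(1), fix a future branch $0$-cell $b\in X_0$ and a future homotopy flow $H$. Applying Proposition~\ref{prp:hfc}.1 to the $0$-cube $b$ (so that $|c|=\{b\}$) produces $T>0$ with $H_t(b)\in\downarrow\! b$ for $t\in[0,T]$. On the other hand, $H$ being a future homotopy flow forces $t\mapsto H_t(b)$ to be a d-path starting at $H_0(b)=b$, so $H_t(b)\in\uparrow\! b$ for all $t$. Hence $H_t(b)\in\downarrow\! b\cap\uparrow\! b$ throughout $[0,T]$. Shrinking $T$ via continuity of $H$, one can arrange that $H_t(b)$ lies in an arbitrarily small neighbourhood of $b$; since no nontrivial d-loop of a finite cubical complex fits locally around a $0$-cell, this intersection collapses to $\{b\}$ on such a neighbourhood, giving $H_t(b)=b$ on $[0,T]$. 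A continuity/closedness argument applied to the closed set $\{t\in[0,1]\mid H_t(b)=b\}$, combined with rerunning the local analysis at any accumulation point of this set, then extends the conclusion to all $t\in[0,1]$.

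Parts~(2) and~(3) follow mechanically. Given~(1), each $H_t$ fixes $\{b\}$, so Lemma~\ref{lem:invariant}.1 with $Y=\{b\}$ yields invariance of $\downarrow\! b$ under each $H_t$, while Lemma~\ref{lem:invariant}.2 applied to the homotopy flow $H$ itself yields invariance of $X\setminus\downarrow\! b$; this establishes~(2). Part~(3) is the mirror statement for the reverse d-structure, handled by the same argument applied to past homotopy flows and past branch points. For~(4) (resp.~(5)), one invokes~(2) (resp.~(3)) separately for each $b\in B_1\cup B_2$ to get invariance of each factor $\downarrow\! b_i$ and each $X\setminus\downarrow\! b_j$, then closes under Lemma~\ref{lem:invariant}.3.

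The main obstacle is~(1): one must pass from the set-theoretic preservation supplied by Proposition~\ref{prp:hfc}.3 (future branch cubes are collectively preserved) to the pointwise assertion that \emph{each} future branch $0$-cell is fixed by \emph{every} future homotopy flow. The heart of the matter is the local geometric rigidity of a finite cubical complex around a $0$-cell, which forces $\downarrow\! b\cap\uparrow\! b$ to collapse locally onto $\{b\}$ and thereby turns the simultaneous membership $H_t(b)\in\downarrow\! b\cap\uparrow\! b$ into the equality $H_t(b)=b$.
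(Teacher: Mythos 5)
Your derivation of parts (2)--(5) from part (1) is exactly the intended one: once each $H_t$ fixes $b$, Lemma \ref{lem:invariant}.1 gives invariance of $\downarrow\! b$, Lemma \ref{lem:invariant}.2 gives invariance of $X\setminus\downarrow\! b$, the past case is the mirror statement, and Lemma \ref{lem:invariant}.3 handles the intersections in (4)--(5). That half of the proposal is fine.

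The problem is your argument for (1). The paper intends (1) to be read off directly from Proposition \ref{prp:hfc}.3: a future branch point is a future branch cube of dimension $0$, a future homotopy flow preserves future branch cubes, and preserving a one-point cube means fixing the point -- no further local analysis is needed. You instead use only Proposition \ref{prp:hfc}.1 together with the observation $H_t(b)\in\uparrow\! b$, and then claim that, after shrinking $T$, the membership $H_t(b)\in\downarrow\! b\cap\uparrow\! b$ in a small neighbourhood of $b$ forces $H_t(b)=b$ because ``no nontrivial d-loop fits locally around a $0$-cell''. That step is false in the generality in which the corollary is stated. The d-paths witnessing $H_t(b)\in\downarrow\! b$ need not be small, so the absence of short d-loops near $b$ is irrelevant: in a finite cubical complex with directed loops through $b$ (take the wedge of two directed circles at the vertex $b$, where $b$ \emph{is} a future branch point), one has $\downarrow\! b\cap\uparrow\! b=X$, so every point of every neighbourhood of $b$ lies in this intersection and your collapse argument yields nothing. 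Since handling d-spaces with non-trivial directed loops is one of the stated aims of the paper, this is a genuine gap rather than a harmless restriction; your argument only covers loop-free complexes (where $\downarrow\! b\cap\uparrow\! b=\{b\}$ globally and no shrinking is even needed). A second, smaller issue is the prolongation from $[0,T]$ to all of $[0,1]$: ``rerunning the local analysis at an accumulation point'' requires a version of Proposition \ref{prp:hfc}.1 for the restarted homotopy $s\mapsto H_{t_0+s}$, which no longer starts at the identity, so it is not literally the statement you cite. Both issues disappear if you simply invoke Proposition \ref{prp:hfc}.3 (or, if you want a self-contained argument, reproduce the two-cube contradiction argument from the proof of Proposition \ref{prp:hfc}, which compares the endpoints of the d-path $H_t(\sigma)$ against the cells $d$ and $e$ rather than against $\downarrow\! b\cap\uparrow\! b$).
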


For $x\in X$, divide the set $B(X)\subseteq X$ of \emph{all} future branch points into the set $B_1(x)$ containing all those in $\uparrow\! x$ and into $B_2(x)$ containing the remaining ones.  Let 
\begin{equation}\label{eq:downarrow}
E^+(x):= \bigcap_{b_i\in B_1(x)}\downarrow\! b_i\cap\bigcap_{b_j\in B_2(x)}(X\setminus\downarrow\! b_j).
\end{equation}
Obviously, $x\in E^+(x)$. Intersections over empty sets $B_i(x)$ are interpreted as the entire space $X$; in particular, $E^+(x)=X$ if $B_1(x)=B_2(x)=\emptyset$. Observe that the relation defined by  $x\simeq y\Leftrightarrow E^+(x)=E^+(y)$ is an equivalence relation on $X$. If $B_1(x)\neq\emptyset$, then $E^+(x)$ is obviously path-connected.

As an immediate consequence of Corollary \ref{cor:downarrow}(4-5), we obtain:

\begin{corollary}\label{eq:smallcomp}
Let $C\subset\vec{X^2}$ denote a future component, ie an object in $\vec{\pi}_0(X;+,\mc{F})_*^*$. Let $p_i: \vec{X^2}\to X, i=1,2,$ denote one of the two projections, and let $x\in X$.\\ If $x\in p_i(C)$, then $p_i(C)\subset E^+(x)$.
\end{corollary}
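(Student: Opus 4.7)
The plan is to traverse the zig-zag of $\Sigma_{\mc{F}}^+$-morphisms witnessing that two pairs lie in the same future component $C$, and to verify that each step preserves the $E^+$-class of each coordinate separately.

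First I would fix $(x,y)\in C$ with $p_1$-coordinate equal to $x$, taking $i=1$ without loss of generality since the case $i=2$ is structurally identical. By the definition of $\vec{\pi}_0(X;+,\mc{F})_*^*$ in Section \ref{sss:pcc}, every other $(x',y')\in C$ is linked to $(x,y)$ by a zig-zag of morphisms of the form $(a,b)\mapsto (f(a),f(b))$, where $f\in\Sigma_{\mc{F}}^+(X)$ is a $+\mc{F}$-inessential d-map and therefore arises as $H_1$ for some $+\mc{F}$-psp future homotopy flow $H: X\times\vec{I}\to X$ with $H_0=id_X$.

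Next I would invoke Corollary \ref{cor:downarrow}(4). By construction, $E^+(a)$ has precisely the form considered there, namely an intersection $\bigcap_{b_i\in B_1(a)}\downarrow b_i\cap\bigcap_{b_j\in B_2(a)}(X\setminus\downarrow b_j)$ for the disjoint cover $B_1(a)\sqcup B_2(a)=B(X)$ of the set of future branch points. Hence $H$ preserves $E^+(a)$, and in particular $f(a)=H_1(a)\in E^+(a)$. Because $B_1(a)\sqcup B_2(a)$ exhausts $B(X)$, membership of $f(a)$ in $E^+(a)$ already forces $B_1(f(a))=B_1(a)$ and $B_2(f(a))=B_2(a)$, upgrading the set-membership to the equality $E^+(f(a))=E^+(a)$; the same reasoning applies to $b$.

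Finally I would iterate this along the zig-zag. Forward arrows give $E^+(f(a))=E^+(a)$ directly; backward arrows are handled by reading the same equality right-to-left. By induction, every first coordinate appearing along the zig-zag shares the common class $E^+(x)$, and so $p_1(C)\subset E^+(x)$. I expect no substantive obstacle beyond bookkeeping: the argument is a straightforward combination of the definition of a future component, the invariance statement of Corollary \ref{cor:downarrow}(4), and the observation that the disjoint cover $B(X)=B_1(a)\sqcup B_2(a)$ promotes the pointwise containment $f(a)\in E^+(a)$ to equality of $E^+$-classes. The only edge case to mention is $B_1(a)=B_2(a)=\emptyset$, where the stated convention $E^+(a)=X$ trivializes the inclusion.
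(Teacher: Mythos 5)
Your argument is correct and follows the same route the paper intends: the paper derives the corollary as an immediate consequence of Corollary \ref{cor:downarrow}(4--5), i.e.\ invariance of the sets $E^+(x)$ under future homotopy flows, applied along the zig-zag defining a future component. Your extra step -- upgrading $f(a)\in E^+(a)$ to $E^+(f(a))=E^+(a)$ via the partition $B(X)=B_1(a)\sqcup B_2(a)$ so that backward arrows of the zig-zag are also handled -- is exactly the bookkeeping the paper leaves implicit, and it is carried out correctly.
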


Similar statements apply to past components whose projections are contained in similarly defined sets $E^-(x)$ and to total components whose projections are contained in sets $E^+(x)\cap E^-(x)$.

Remark that we did not need a psp-property (preservation of path spaces) for the result in Corollary \ref{eq:smallcomp}.

\subsubsection{Psp homotopy flows and components}
The following property is an immediate consequence of the definitions (for a general d-space $X$):
\begin{lemma}\label{lem:noniso}
If two pairs $(x_1,y_1)$ and $(x_2,y_2)$ belong to the same pair component in $\vec{\pi}_0(X;\alpha , \mc{F})^*_*$, then
\begin{enumerate}
\item $\vec{T}(X)_{x_1}^{y_1}$ and $\vec{T}(X)_{x_2}^{y_2}$ are $\mc{F}$-equivalent (ie (weakly) homotopy equivalent, homology equivalent $\dots$ ).
\item Furthermore, there exist paths $p$ and $q$ in $X$ connecting $x_1$ with $x_2$ resp.\ $y_1$ with $y_2$ such that $\vec{P}(X)_{p(t)}^{q(t)}$ are all $\mc{F}$-equivalent.
For $\alpha = +, -, \pm$, these paths can be chosen as zig-zags of d-paths.
\end{enumerate}
\end{lemma}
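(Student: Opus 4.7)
The plan is to unfold the hypothesis as a zig-zag of $\Sigma^{\alpha}_{\mc{F}}(X)$-morphisms, treat a single edge at a time, and then concatenate. This is why the authors call the lemma an ``immediate consequence of the definitions''.

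By the description opening Section \ref{sss:pcc}, two reachable pairs belong to the same component in $\vec{\pi}_0(X;\alpha,\mc{F})^*_*$ if and only if they are connected by a finite zig-zag
\[(x_1,y_1) \;\to\; (a_1,b_1) \;\leftarrow\; (a_2,b_2) \;\to\; \cdots \;\to\; (x_2,y_2)\]
whose arrows lie in $\Sigma^{\alpha}_{\mc{F}}(X)$, i.e.\ each is an $\alpha\mc{F}$-inessential endo-d-map $f\colon X \to X$ sending one pair of endpoints to the other. It therefore suffices to verify both conclusions for a single edge and concatenate; a reversed arrow simply means the inessential $f$ is oriented in the opposite direction, but this does not alter the argument below.

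For a single arrow given by an $\alpha\mc{F}$-inessential $f$ with $f(u,v)=(u',v')$, Definition \ref{def:psp}(3) supplies an $\alpha\mc{F}$-psp homotopy flow $H$ (with parameter in $\vec{I}$ for $\alpha=+,-,\pm$, in $I$ for $\alpha=0$) satisfying $H_0=\mathrm{id}_X$ and $H_1=f$ (or the reverse for $\alpha=-$). Conclusion (1) is then immediate: the psp condition on $H_1=f$ (equivalently, Lemma \ref{lem:homflow}(3)) says that $\vec{T}(f)\colon \vec{T}(X)_u^v\to\vec{T}(X)_{u'}^{v'}$ is an $\mc{F}$-equivalence. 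Concatenation along the zig-zag then produces the asserted $\mc{F}$-equivalence between $\vec{T}(X)_{x_1}^{y_1}$ and $\vec{T}(X)_{x_2}^{y_2}$ in the appropriate localized homotopy category.

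For (2), set $p(t):=H(u,t)$ and $q(t):=H(v,t)$; thus $(p(0),q(0))=(u,v)$ and $(p(1),q(1))=(u',v')$. Since each $H_t$ is psp, $\vec{T}(H_t)$ exhibits $\vec{T}(X)_{p(t)}^{q(t)}$ as $\mc{F}$-equivalent to $\vec{T}(X)_u^v$ for every $t\in I$, and hence all of them are mutually $\mc{F}$-equivalent; passing to $\vec{P}$ is harmless because $\vec{P}_{p(t)}^{q(t)}\to\vec{T}(X)_{p(t)}^{q(t)}$ is a homotopy equivalence in the cubical setting of interest (and otherwise one may rephrase the conclusion in terms of $\vec{T}$). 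For $\alpha=+,-,\pm$ the parameter interval is $\vec{I}$, so $p$ and $q$ are d-paths; for $\alpha=0$ they are mere paths. Stringing the edges of the zig-zag together gives the required (zig-zag of) (d-)paths. The only real bookkeeping to track is aligning the time parameter of $H$ with the direction of each arrow in the zig-zag, which is a routine reparametrization rather than a genuine obstacle.
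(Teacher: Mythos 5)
Your proof is correct and is exactly the argument the paper intends: the paper gives no written proof, declaring the lemma an ``immediate consequence of the definitions,'' and your edge-by-edge unfolding of the zig-zag of $\Sigma^{\alpha}_{\mc{F}}(X)$-morphisms into psp homotopy flows, restricted to the two endpoints, is precisely that consequence spelled out. Your side remark on passing from $\vec{T}(X)_{p(t)}^{q(t)}$ to $\vec{P}(X)_{p(t)}^{q(t)}$ correctly flags the only looseness in the paper's own statement, which defines the psp property in terms of trace spaces.
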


The following easy observation is useful for many examples and replaces lr-conditions on left/right extensions \cite{FGHR:04,GH:07}, also called Ore conditions in \cite{Dubut:17}:

\begin{lemma}\label{lem:lr}
Let $H$ denote a future $\mc{F}$-psp homotopy flow, $x_1\in X, x_2:=H_1(x_1)$. For every $y_1\in X$ with $\vec{T}(X)_{x_1}^{y_1}\neq\emptyset$ there exists $y_2\in X$ with $\vec{T}(X)_{x_2}^{y_2}\neq\emptyset\neq\vec{T}(X)_{y_1}^{y_2}$ such that $(x_1,y_1)$ and $(x_2,y_2)$ are contained in the same $+\mc{F}$-component.\\
Similarly for past $\mc{F}$-psp homotopy flows.
\end{lemma}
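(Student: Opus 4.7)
The plan is to take $y_2 := H_1(y_1)$ and verify the three required properties in turn.

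First I would check the two non-emptiness conditions on trace spaces. By assumption, pick a d-path $\sigma: x_1\leadsto y_1$ representing a class in $\vec{T}(X)_{x_1}^{y_1}$. Since $H_1\in\vec{C}(X,X)$ is a d-map (being the terminal level of a homotopy flow), $H_1\circ\sigma$ is a d-path from $H_1(x_1)=x_2$ to $H_1(y_1)=y_2$, so $\vec{T}(X)_{x_2}^{y_2}\neq\emptyset$. For the other trace space, the restriction $H(y_1,-):\vec{I}\to X$ of the homotopy flow to the point $y_1$ is by definition a d-path from $H_0(y_1)=y_1$ to $H_1(y_1)=y_2$, giving $\vec{T}(X)_{y_1}^{y_2}\neq\emptyset$.

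Next I would establish that $(x_1,y_1)$ and $(x_2,y_2)$ lie in the same future $\mc{F}$-component. Since $H$ is by hypothesis a future $\mc{F}$-psp homotopy flow with $H_0=id_X$, its endpoint $H_1$ is a future $\mc{F}$-inessential endo-d-map (Definition \ref{def:psp}.3), so $H_1\in \Sigma^+_{\mc{F}}(X)$. Viewed as an endo-d-morphism in the category $\Sigma^+_{\mc{F}}E\vec{\pi}_1(X)$, the map $H_1$ is a single-step $\Sigma^+_{\mc{F}}(X)$-morphism $H_1:(x_1,y_1)\to (H_1x_1,H_1y_1)=(x_2,y_2)$. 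By the definition of the future pair component category $\vec{\pi}_0(X;+,\mc{F})_*^*$ in Section \ref{sss:pcc}, the existence of such a one-step $\Sigma^+_{\mc{F}}$-zig-zag already guarantees that the two pairs represent the same object.

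The past case is handled symmetrically by applying the above to the reverse d-structure (equivalently, running a past $\mc{F}$-psp homotopy flow $H$ from $f$ to $id_X$ and taking $y_2$ such that $H_0(y_2)=y_1$; the construction then yields the appropriate d-paths). I do not expect any substantive obstacle here: everything follows essentially from unpacking the definitions of a homotopy flow and of the future pair component category, together with the fact that the level map $H_1$ is simultaneously a d-map (producing the d-path on the $x$-coordinate by postcomposition with $\sigma$) and the endpoint of a trajectory through each point (producing the d-path on the $y$-coordinate by restriction to $y_1$). The only care needed is to notice that one does not have to construct a $\Sigma$-zig-zag: a single inessential d-map realizes the equivalence in one step.
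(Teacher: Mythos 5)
Your proposal is correct and takes essentially the same route as the paper's (very terse) proof: the paper also sets $y_2=H_1(y_1)$ and invokes the $\mc{F}$-psp property of $H$ so that $\vec{T}(H_1):\vec{T}(X)_{x_1}^{y_1}\to\vec{T}(X)_{x_2}^{y_2}$ is an $\mc{F}$-equivalence, the rest being exactly the unpacking of definitions you spell out (non-emptiness via $H_1\circ\sigma$ and the trajectory $H(y_1,-)$, same component via the one-step $\Sigma^+_{\mc{F}}$-morphism $H_1$). One small caveat: in your parenthetical for the past case, choosing $y_2$ with $H_0(y_2)=y_1$ need not be possible since $H_0$ is not assumed surjective; the reverse-d-structure argument you also mention is the right dualization, with the connecting d-path then running from $y_2$ to $y_1$.
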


\begin{proof}
The $\mc{F}$-psp property of $H$ makes sure that $\vec{T}(H_t):\vec{T}(X)_{x_1}^{y_1}\to\vec{T}(X)_{H_tx_1}^{H_tx2}, \; t\in I,$ and, in particular, $\vec{T}(H_1):\vec{T}(X)_{x_1}^{y_1}\to\vec{T}(X)_{x_2}^{y_2}$ is an $\mc{F}$-equivalence.
\end{proof}

\subsection{Products}\label{ss:prod}
Let $X_1, X_2$ denote d-spaces; we will consider our constructs for their product, the d-space $X=X_1\times X_2$. There are natural homeomorphisms $\vec{T}(X)\cong\vec{T}(X_1)\times\vec{T}(X_2)$ fibered over $\vec{X^2}\cong\vec{X_1^2}\times\vec{X_2^2}$, giving rise to natural homeomorphisms $\vec{T}(X)_{(x_1,x_2)}^{(y_1,y_2)}\cong\vec{T}(X_1)_{x_1}^{y_1}\times\vec{T}(X_2)_{x_2}^{y_2}$ on all fibers, $(x_1,x_2), (y_1,y_2)\in X$. Hence the extension category $E\vec{T}(X)$ is naturally isomorphic to $E\vec{T}(X_1)\times E\vec{T}(X_2)$; likewise $E\vec{\pi}_1(X)$ to $E\vec{\pi}_1(X_1)\times E\vec{\pi}_1(X_2)$.

A pair of endo-d-maps, $f_1$ on $X_1$ and $f_2$ on $X_2$, defines the endo-d-map $f_1\times f_2$ on $X$. If the maps $f_i$ are $\mc{F}$-psp, then so is $f_1\times f_2$. But not every (psp) d-map on $X$ is a product. In general, the inclusion of product maps induces monomorphisms $d(X_1)\times d(X_2)\hookrightarrow d(X)$ and likewise $\Sigma_{\mc{F}}^{\alpha}(X_1)\times \Sigma_{\mc{F}}^{\alpha}(X_2)\hookrightarrow \Sigma_{\mc{F}}^{\alpha}(X)$. After localization and quotient formation, one obtains from the isomorphism of categories above:

\begin{proposition}\label{prop:prod}
The quotient functor \[\vec{\pi}_0(X_1; \alpha,\mc{F})_*^*\times\vec{\pi}_0(X_2; \alpha,\mc{F})_*^*\to\vec{\pi}_0(X; \alpha, \mc{F})_*^*\] is \emph{onto} on objects and full on morphisms.
\end{proposition}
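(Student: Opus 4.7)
The plan is to treat the two assertions separately, in both cases leveraging the natural isomorphism $E\vec{\pi}_1(X)\cong E\vec{\pi}_1(X_1)\times E\vec{\pi}_1(X_2)$ and the monoid inclusion $\Sigma_{\mc{F}}^{\alpha}(X_1)\times\Sigma_{\mc{F}}^{\alpha}(X_2)\hookrightarrow\Sigma_{\mc{F}}^{\alpha}(X)$ recalled just before the statement. Together, these two data are precisely what induces the quotient functor under scrutiny.

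For surjectivity on objects, I first invoke the homeomorphism $\vec{X^2}\cong\vec{X_1^2}\times\vec{X_2^2}$ that sends $((x^1,x^2),(y^1,y^2))$ to $((x^1,y^1),(x^2,y^2))$. Any component of $\vec{\pi}_0(X;\alpha,\mc{F})_*^*$ is represented by some pair in $\vec{X^2}$, which splits under this homeomorphism into two pairs $(x^i,y^i)\in\vec{X_i^2}$; taking components in each factor produces an object of $\vec{\pi}_0(X_1;\alpha,\mc{F})_*^*\times\vec{\pi}_0(X_2;\alpha,\mc{F})_*^*$ whose image under the quotient functor is the given component. This step is essentially a relabelling.

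For fullness on morphisms, my strategy has two parts. First, I reduce every morphism in the target quotient category to a representative which is a pure extension $(\sigma *,*\tau)$ in $E\vec{\pi}_1(X)$. By Remark \ref{rem:dext}(3), every morphism in $dE\vec{\pi}_1(X)$ factors as $(\sigma *,*\tau)\circ f$ with $f$ an endo-d-map; in the localized category one additionally has formal inverses $f^{-1}$, but Lemma \ref{lem:loc}(1)(a) allows me to commute them past extensions, and after passing to the quotient every inessential $f$ and $f^{-1}$ collapses to an identity on its component. A single extension is left as representative. Second, I decompose that extension via $E\vec{\pi}_1(X)\cong E\vec{\pi}_1(X_1)\times E\vec{\pi}_1(X_2)$ into a pair $((\sigma_1 *,*\tau_1),(\sigma_2 *,*\tau_2))$; this pair represents a morphism in the source product category whose image under the quotient functor is the given morphism.

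The main obstacle I foresee is making the reduction step of the previous paragraph watertight: starting from an arbitrary zig-zag representative in the localized-then-quotiented category, one must carefully rearrange the interleaved inessential morphisms, their formal inverses, and extensions into the canonical form \emph{extension after inessential}, respecting both relations \eqref{eq:dE} and \eqref{eq:flow}. I expect an induction on the zig-zag length, using Lemma \ref{lem:loc}(1) repeatedly to push all inverses to one side, to do the job. It is worth noting that the functor will typically fail to be \emph{faithful}, because an inessential endo-d-map on $X$ need not split as a product $f_1\times f_2$, so $\Sigma_{\mc{F}}^{\alpha}(X)$ may impose strictly more identifications than the product submonoid does; this extra coarsening is, however, compatible with — and in fact irrelevant to — fullness.
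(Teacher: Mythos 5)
Your proposal is correct and follows essentially the same route as the paper: the paper's (very brief) argument consists precisely of the natural isomorphism $E\vec{\pi}_1(X)\cong E\vec{\pi}_1(X_1)\times E\vec{\pi}_1(X_2)$, the inclusion $\Sigma_{\mc{F}}^{\alpha}(X_1)\times\Sigma_{\mc{F}}^{\alpha}(X_2)\hookrightarrow\Sigma_{\mc{F}}^{\alpha}(X)$, and the fact (invoked already in Section \ref{sss:pcc}) that every morphism of a pair component category is represented by an extension, so that onto-ness on objects and fullness follow by splitting pairs and extensions factorwise. Your write-up only makes explicit the reduction of quotient morphisms to extension representatives, which the paper takes for granted, and your closing remark on non-faithfulness matches the paper's own observation that not every psp d-map on $X=X_1\times X_2$ is a product map.
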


\section{Examples}\label{s:ex}
In this section, we walk through a number of simple, but key examples for which it is possible to determine pair component categories by elementary considerations.
\subsection{Intervals and hyperrectangles}

\subsubsection{An interval}\label{sss:int}
Let $J\subseteq\mb{R}$ denote an interval; it may be (half-)open or closed, bounded or unbounded. All trace spaces $\vec{T}(J)_x^y$, $(x,y)\in\vec{J}^2$, are contractible, and hence any endo-d-map on $J$ is automatically psp. Moreover, any two endo-d-maps $f, g: J\to J$ are psp-d-homotopic to $\max (f, g)$ and to $\min (f, g)$ -- by convex combination. In particular, they are all inessential with respect to every choice of $\alpha$ and $\mc{F}$. In fact, the space of endo-d-maps on $J$ is contractible.  

For every two pairs 
$(x_1,y_1)$, $(x_2,y_2)\in\vec{J}^2$ with $x_1\neq y_1$ there exists an endo-d-map $f: J\to J$ with $f(x_1)=x_2, f(y_1)=y_2$, for example a piecewise linear map with $f(t)=x_2, t\le x_1$, and $f(t)=y_2, t\ge y_1$. Hence, any two pairs $(x_1,x_2), (x_2,y_2)$ are contained in the same (unique) component, and the pair component category $\vec{\pi}_0(J; \alpha, \mc{F})$ is the trivial category with one object and one (identity) morphism -- for every choice of $\alpha$ and $\mc{F}$.

\subsubsection{Hyperrectangles and generalizations}
A hyperrectangle is a finite product $H=J_1\times\cdots\times J_n$ of intervals. It follows from Proposition \ref{prop:prod} that also the pair component category $\vec{\pi}_0(H; \alpha, \mc{F})$ is the trivial category with one object and one (identity) morphism.

Using the same reasoning as in Section \ref{sss:int}, one can show that pair component categories are trivial, more generally, for subspaces $K\subseteq\mb{R}^n$ satisfying the following property: For every pair $x,y\in K$, the lines connecting $x$ and $y$ with $\max (x,y)$, resp.\ with $\min (x,y)$ are contained in $K$.
 
\subsection{Directed graphs}  
\subsubsection{A branching} A \emph{future branching} graph is the pre-cubical set consisting of two 1-cubes $a$ and $b$ and three 0-cubes $d^-_1(a)=d^-_1(b), d^+_1(a)$ and $d^+_1(b)$.  Its geometric realizaton is homeomorphic to the subspace $B=\{( (x,y)\in [0,1]^2|\; xy=0\}\subset\mb{R}^2$ with induced directed topology. All non-trivial path spaces are contractible. Any future homotopy flow $H:B\times\vec{I}\to B$ has to fix the only future branch point, ie the origin $O=(0,0)$ (equal to its own past $\downarrow\! O$) as well as the path components of its complement (cf Corollary \ref{cor:downarrow}.4 and Lemma \ref{lem:pathcon}) $B_x=\{ (x,0)|\; x\in ]0,1]\}$ and $B_y=\{ (0,y)|\; y\in ]0,1]\}$; note that $H$ is automatically psp. It is elementary to see that the future pair category $\vec{\pi}_0(B; +, \mc{F})_*^*$ (suppressing identities, and regardless of $\mc{F}$) of the form 
{\small \[\xymatrix{ & (O,B_x) & (B_x,B_x)\ar[l]_{-} \\
(O,O)\ar[ru]^+\ar[rd]^+ & & \\
& (O,B_y) & (B_y,B_y)\ar[l]_{-} }\]
Each arrow represents a well-defined morphism (+ represented by an extension in the future, - by an extension in the past). This category is isomorphic to the extension category of the (poset) category with three objects $O, B_x, B_y$ and non-reversible relations given by $O<B_x$ and $O<B_y$. 

For the past pair component category, note that every reachable pair of points can be connected with the pair $(O,O)$ by a past psp homotopy flow. Hence the past pair category $\vec{\pi}_0(B; -, \mc{F})_*^*$ is the trivial category with one object and one identity morphism. 

The total pair category $\vec{\pi}_0(B; \pm , \mc{F})_*^*$ coincides with the future past category, the neutral pair category $\vec{\pi}_0(B;0, \mc{F})_*^*$ is also trivial.

\subsubsection{Particular directed graphs} We will consider a directed graph $G$ (a one-dimensional pre-cubical set), with the property that there is at most one directed path between two vertices.  Morphisms are thus either empty or singletons, and this gives rise to a partial order $\le$ on the vertices. Restrict this partial order to the (future) branch points (out-degree $>1$). For every branch point $b_i$, consider the subgraph $G_i:=\downarrow\! b_i\setminus \bigcup_{b_j<b_i}\downarrow\! b_j$; moreover the path-components $G_k^{top}$ of $G^{top}:=G\setminus \bigcup_j\downarrow\! b_j$; the latter correspond to top branches not including any branch point.

\begin{lemma}
The future components (objects of the future pair component category  $\vec{\pi}_0(G;+,\mc{F})_*^*$) are products of subgraphs of the form
\begin{itemize}
\item $G_i\times G_l$ for $b_i\leq b_l$
\item $G_i\times G_k^{top}$ if $G_k^{top}$ is reachable from $b_i$, or
\item $G_k^{top}\times G_k^{top}$.
\end{itemize}
Morphisms are inherited from the partial order on the branch points.
\end{lemma}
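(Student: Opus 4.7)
The plan is to establish the classification by combining the invariance properties from Corollary \ref{cor:downarrow} and Corollary \ref{eq:smallcomp} (to restrict which pairs can lie in a common component) with an explicit construction of future psp homotopy flows (to show the listed products are in fact single components). Throughout, I would first observe that since $G$ admits at most one d-path between any two vertices, every non-empty trace space $\vec{T}(G)_x^y$ is a singleton; consequently every endo-d-map on $G$ is automatically $\mc{F}$-psp for every equivalence system $\mc{F}$, so inessentiality reduces to the existence of a d-homotopy to $\id_G$ and the answer is independent of $\mc{F}$. I would then check that $G$ is partitioned (as a d-space) by the subgraphs $G_i$ and $G_k^{top}$: given a vertex $x$, the unique-d-path assumption forces the reachable branch points above $x$ to have a minimum whenever that set is non-empty (any two such branch points above $x$ are either comparable, or their paths from $x$ first separate at some intermediate branch point below both, which lies in some smaller $G_{i'}$ and absorbs $x$), placing $x$ unambiguously into a single $G_i$; otherwise $x$ lies in a unique top component $G_k^{top}$.

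Next I would establish the \emph{upper bound} on components. By Proposition \ref{prp:hfc} and Corollary \ref{cor:downarrow}, any future homotopy flow on $G$ fixes every future branch point and preserves each set $\downarrow\! b_i$, each complement $G\setminus\downarrow\! b_j$, and (by Lemma \ref{lem:invariant}(3)) all intersections of these. Hence each $G_i$ and each $G_k^{top}$ is invariant under every future psp homotopy flow. Applying Corollary \ref{eq:smallcomp} to both coordinates shows that the component of a reachable pair $(x,y)\in \vec{G^2}$ is contained in the product of the pieces containing $x$ and $y$. Combining this with the reachability constraint $x\le y$ (forced by the uniqueness of d-paths) yields exactly the three listed cases: $G_i\times G_l$ requires $b_i\le b_l$ (since the unique d-path from $G_i$ to $G_l$ factors through $b_i$ then to $b_l$), and $G_i\times G_k^{top}$ requires $G_k^{top}$ to lie in $\uparrow b_i$.

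For the \emph{lower bound}, I would produce, for any two reachable pairs $(x_1,y_1),(x_2,y_2)$ inside the same claimed product, a zig-zag of future psp homotopy flows connecting them. The construction uses ``edge-pushing'' d-maps: given an orientation of the edges, define $f\colon G\to G$ to fix all branch points and push each non-branch vertex a prescribed amount along its outgoing d-path (clamped so as to never cross a branch point). Linear interpolation on each edge gives a d-homotopy from $\id_G$ to $f$ — it is a d-map because the fixed branch points block continuity failures at vertices, and it is psp for free by the singleton trace-space remark. Iterating such flows, I can push $x_1$ arbitrarily far forward inside its piece $G_i$ (all the way up to $b_i$) and simultaneously push $y_1$ up to the top vertex $b_l$ (resp.\ arbitrarily far along the top branch $G_k^{top}$); doing the same to $(x_2,y_2)$ produces the common endpoint $(b_i,b_l)$ (resp.\ a point on the shared top branch). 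Two future psp flows meeting at a common target certify that the two initial pairs belong to the same future component. The cases $G_i\times G_k^{top}$ and $G_k^{top}\times G_k^{top}$ are handled identically, with the extra freedom that top-branch coordinates can be slid arbitrarily far since no branch point blocks the push.

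Finally, for the morphism description, I would observe that in $E\vec{\pi}_1(G)$ a morphism $(x,y)\to (x',y')$ is a pair of homotopy classes of d-paths $(\alpha\colon x'\to x,\beta\colon y\to y')$; by uniqueness, this morphism set is either empty or a singleton, and a singleton precisely when $x'\le x$ and $y\le y'$. Passing to the quotient by $\Sigma^+_{\mc{F}}(G)$, any such extension can be composed with (inverses of) the intra-component pushing flows constructed above to reduce both endpoints to the canonical representatives (branch points $b_i$, $b_l$, or a marked point in $G_k^{top}$). What survives is exactly the induced relation between components, which is the partial order on the branch points (and its extension to the reachable top components). I expect the main technical obstacle to be the construction of the pushing flows in sufficient generality at branch points of arbitrary valence: one must verify that the piecewise-linear d-map produced is continuous and d-monotone when several edges meet, but this is local and reduces to the simple branching example of Section~\ref{s:ex}, where the fact that $H$ fixes the branch point makes the construction automatic.
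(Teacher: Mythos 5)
Your proposal is correct and follows essentially the same two-step strategy as the paper's own proof: the containment of each component in one of the listed products via Corollary \ref{eq:smallcomp}, and the converse via explicitly constructed future homotopy flows (automatically psp since all non-empty trace spaces of $G$ are singletons) pushing pairs within $G_i\times G_l$ to $(b_i,b_l)$, with zig-zags for the top-branch cases. Your additional details (the partition of $G$ into the pieces $G_i$, $G_k^{top}$, the clamped edge-pushing construction, and the reduction of morphisms to the partial order) are elaborations of steps the paper leaves implicit rather than a different route.
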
 

\begin{proof}
It follows from Corollary \ref{eq:smallcomp} that a component must be contained in one of the products of subgraphs described above. On the other hand, any pair of points $(x_i,x_l)\in G_i\times G_l$ can be connected by a future homotopy flow (which is automatically psp)  with $(b_i,b_l)$ fixing the complement of $G_i\cup G_l$. Likewise any pair of points $(x_i,x_k)\in G_i\times G^{top}_k$ can be connected by a zig-zag of future homotopy flows with any other pair in $G_i\times G^{top}_k$.
\end{proof}

A similar result identifies past pair components (objects in $\vec{\pi}_0(G;-,\mc{F})_*^*$) with products of subgraphs between past branch points or of bottom branches. For total pair components (objects in  $\vec{\pi}_0(G;\pm ,\mc{F})^*_*$), both future and past branch points, top branches and bottom branches have to be taken into account. 

For the neutral pair components (objects in $\vec{\pi}_0(G;0,\mc{F})_*^*$), Proposition \ref{prp:hfc} tells us: Factors of components arise as subgraphs arising as zig-zags of d-paths between branch-points and connecting a past branch point with a future branch point (no intermediate future branch point) or connecting a future branch point with a past branch point (no intermediate past branch point) by a reverse d-path. Top branches and bottom branches are identified with a branch point and do not give rise to components.   

\begin{example}
 The directed graph representing the letter $M$ has one future branch vertex and two past ones. $M^{top}$ has two path components, and $\overrightarrow{\pi}_0(M;+,\mc{F})_*^*$ has five pair components. $M^{bot}$ has three path components; taking reachable pairs among the two branch vertices and these components leads to nine components in $\overrightarrow{\pi}_0(M;-,\mc{F})_*^*$. For the total pair component category, $\overrightarrow{\pi}_0(M;\pm,\mc{F})_*^*$, the three branch points and the four components of the complement stay invariant; this results in 15 pair components. The neutral pair component category $\overrightarrow{\pi}_0(M;0,\mc{F})_*^*$ is trivial.
 \end{example}
 
It is more complicated to determine components for graphs with several directed paths between certain pairs of vertices (thus including non-directed loops). For a simple example, consult Section
  \ref{sss:bdBox2} below.
  
\subsection{Simple cubical complexes without directed loops}
\subsubsection{Boundary of a square}\label{sss:bdBox2}
We consider the boundary $\partial\Box^2$ of the 2-cube $\Box^2$. Its geometric realization $X=\partial [0,1]^2$ decomposes into $A=\{ (0,0)\} , C=\{ (1,1)\}, B_1=\{ x,0)|\; x>0\}\cup\{ (1,y)|\; y<1\}$ and $B_2=\{ (0,y)|\; y>0\}\cup \{(x,1)|\; x<1\}$; we wish to show that its component category $\vec{\pi}_0(X;\alpha ,\mc{F})_*^*$ is given by 

\begin{center}
\begin{tikzcd}
       & & B_1B_1 \arrow[ld, "-", swap] \arrow[rd, "+"] & & \\
& AB_1 \arrow[rd, "\tc{red}{+}"] & & B_1C \arrow[ld, "\tc{red}{-}", swap] & \\
AA \arrow[rd, "+"] \arrow[ru, "+"] & & AC & & CC \arrow[ld, "-", swap]\arrow[lu,"-" ,swap]\\
& AB_2 \arrow[ru, "\tc{red}{+}"] & & B_2C \arrow[lu, "\tc{red}{-}", swap] & \\
& & B_2B_2 \arrow[lu, "-", swap] \arrow[ru, "+"] & & 
    \end{tikzcd}
    \end{center}

 The path space $\vec{P}(\partial\Box^2)_A^C$ consists of two contractible components; all other non-empty path spaces are contractible. In particular, every psp-homotopy flow preserves both $A$ and $C$ and their complement $B_1\cup B_2$. The sets $B_1$ and $B_2$ are not connected to each other. By Lemma \ref{lem:pathcon}.2, no component can contain sources, resp.\ targets from more than one $B_i$ -- since projections of connected spaces are connected. 

As a consequence, there are three one element components $(A,C), (A,A)$ and $(C,C)$. All other components are contained in $(A,B_i), (B_i,C), (B_i,B_i),\; i=1,2$.  It is easy to find a (one-zig one-zag) psp homotopy flow connecting pairs within the given regions; for a formal justification, cf 
Section \ref{s:precub}. 

\noindent Morphisms that can be represented by future (resp.\ past) extensions are marked with $+$ (resp.\ $-$). A mark is in red if the extension (with target $AC$) does \emph{not} cover a homotopy equivalence of path spaces. The left square marked with $+$ and the right square marked with $-$ do \emph{not} commute; the mixed top and bottom squares do. Not surprisingly, this category is (isomorphic to) the extension category of the graph category on a directed graph with vertices
$A, B_1, B_2, C$ and directed arrows from $A$ to $B_i$ and from $B_i$ to $C$.

For $X=\partial\Box^2$, the past pair component category and also the total pair component category are isomorphic to the future pair component category : $X$ is d-homeomorphic to the space with the reverse d-path structure. Finally, also the neutral pair category yields the same result: A neutral psp-homotopy flow has to preserve the pair $(A,C)$ and hence $A$ and $C$; the rest of the argument is as above. 

The d-space $Y=I^2\setminus J^2$ with $J=]j_0,j_1[\subset I=[0,1]$ directed intervals (ie a square from which a minor square has been deleted, arising as model of a very simple HDA modeling mutual exclusion) yields the same results for the pair component categories. For the neutral category ($\alpha =0$), one may use the following argument by comparison: Consider a piecewise linear d-map $f: I\to I$ with $f(0)=f(j_0)=j_0, f(t)=t, j_0\le t\le j_1, f(j_1)=f(1)=j_1$. Then $f^2$ is a psp map from $Y$ into $X':=\partial [j_0,j_1]^2$ leaving $X'$ pointwise fix,  and $X'$ is d-homeomorphic to $X$. Convex combination of $id_Y$ and $f^2$ defines a neutral psp d-homotopy between these two maps, fixing $X'$ pointwise, and thus exhibits $X'$ as a directed deformation retract of $Y$ - with isomorphic neutral pair component category.

The deformation retraction above is not given by a future nor by a past d-homotopy. But it is not difficult to check that the pair component categories $\vec{\pi_0}(Y;\alpha , \mc{F})_*^*$ are  all mutually isomorphic.

\subsubsection{Swiss flag}\ The ``Swiss flag'' is an HDA that arises as a model for two processes only interacting via capacity one semaphores \cite{FGR:06,FGHMR:16}. It can be described by a Euclidean cubical complex (cf Section \ref{ss:Euclid}), a pre-cubical set whose geometric realization can be embedded into a ``cubical plane'', including a deadlock $D$ -- only constant paths with that source, a ``doomed region'' $d$ - every d-path starting in $d$ cannot leave $\downarrow\! D$ - and an unreachable region $u$.  It has a directed deformation retract \cite{Dubut:17} given by the directed graph
\begin{equation}\label{fig:swiss}\xymatrix{ & &  \ar @{}[dr] |{u} Y_1\ar[r]^{y_1} & C\\ & & U\ar[r]\ar[u] & Y_2\ar[u]_{y_2}\\ \ar @{}[dr] |{d}X_1\ar[uurr]^{b_1}\ar[r] & D &  & \\  A\ar[u]^{x_1}\ar[r]_{x_2} & X_2\ar[u]\ar[uurr]_{b_2} & &  &}\end{equation}
with two additional 2-cubes $d$ and $u$ glued in -- with vertices at $A,X_1,X_2,D$, resp.\ $U,Y_1,Y_2,C$. The geometric realization of this 2-complex will be called $S$. 

$X_1$ and $X_2$ represent the only future branch points in $S$. The intersection $\downarrow\! X_1\cap\downarrow\! X_2$ of their pasts is given by the vertex $A$. The intersection $\downarrow\! X_i\cap (S\setminus\downarrow\! X_j)$ of the past of one of them with the complement of the other's are the half-open 1-cells $x_i$ (including $X_i$, excluding $A$). The intersection $(S\setminus\downarrow\! X_1)\cap(S\setminus\downarrow\! X_2)$ of their complements has two connected components: one of them is the complement of the entire 2-cell $d$; the other consists of $d$ apart from the entire two lower 1-cells $x_i$ (corresponding to the doomed region). All of these subspaces are preserved by any future d-homotopy flow by Corollary \ref{cor:downarrow}. 

The pair $(A,C)$ is the only one representing a non-trivial path space.  Hence, as in \ref{sss:bdBox2}, none of the other areas can be moved into $C$ by a psp future homotopy flow. As a consequence, no point in $u\setminus y_1$ can be connected to a point on $y_1$ by a psp-future homotopy flow, since such a d-homotopy would move points on $y_2$ into $C$; and similarly when exchanging $y_1$ and $y_2$. Somewhat surprisingly, a psp future homotopy flow can ``discover'' the unreachable region, ie $u$ apart from its upper 1-cells.. The resulting future pair component category $\vec{\pi}_0(S; +, \mc{F})_*^*$ of the ``Swiss flag'' $S$ does not give rise to further compression; it is the extension category of  
\begin{equation}\label{fig:swiss2}\xymatrix{ & & b_1\cup y_1\ar[r] & C\\ & & u\ar[r]\ar[u] & b_2\cup y_2\ar[u]\\ x_1\ar[uurr]\ar[r] & d &  & \\  A\ar[u]\ar[r] & x_2\ar[u]\ar[uurr] & &  &}\end{equation}
where $x_i, y_i, b_i$ are half-open 1-cells, $d$ and $u$ contain only upper, resp.\ lower boundaries, and the lower und upper squares commute.

A similar procedure applies for the past pair component category $\vec{\pi}_0(S; -, \mc{F})_*^*$: just align $b_i$ with $x_i$ instead of $y_i$. For the total pair component category $\vec{\pi}_0(S; \pm , \mc{F})_*^*$, the (interiors of) the 1-cells $b_i$ give rise to separate components. For the neutral pair component category $\vec{\pi}_0(S; 0, \mc{F})_*^*$, the areas $x_i\cup b_i\cup y_i$ form one component.

\subsubsection{Matchbox}
The matchbox was discussed in \cite{Fahrenberg:04} and has been a test case for various constructs concerning homological constructions. It can be represented as the boundary $\partial\Box^3$ of a 3-box from which the interior of a lower face $\Box^2$ has been removed. An equivalent description is given as the product of $\partial\Box^2\times\vec{I}$ with an additional 2-cell $D$ glued in at the top.  We will use the latter representation and we will reuse notation from Section \ref{sss:bdBox2} concerning $\partial\Box^2$ in the following sense: Components from that case are replaced by their products with the half-open interval $\vec{[0,1[}$ -- and path spaces have the same homotopy types as for $\partial\Box^2$. In particular, only the pair $AC$ represents homotopy type $S^0$; all other pairs in the component category correspond to contractible path spaces. Nevertheless, compared to $\partial\Box^2$, the (entire) cell $D$ gives rise to additional components : There is no psp homotopy flow connecting a point from the complement of $D$ with $D$ itself: such a homotopy flow would push a path from $A$ to $C$ (corresponding to a non-contratible path space) to a path with target in $D$ -- with a contractible path space. 

As a consequence, additional components $AD, B_1D, B_2D, CD$ and $DD$ need to be added to the component categories of $\partial\Box^2$ from Section \ref{sss:bdBox2}, with obvious extension morphisms relating them to the other components and to each other. 

\subsubsection{Cubical ``spheres'' in higher dimensions}  A cube in the cubical decomposition of the boundary of an $n$-box $\partial\Box^n$ (the ``cubical $(n-1)$-sphere'') corresponds to an $n$-tupel $\mb{e}:=(e_1,\dots ,e_n)$ with $e_i\in\{ 0, 1,*\}$, and $\mb{e}\neq (*,\dots ,*)$. These $n$-tuples are partially ordered by the product order arising from $0\le *\le 1$ apart from $(*,\dots *, 0_i,*,\dots *)\not\le (*,\dots *, 1_i,*,\dots *)$.
Let $\mc{P}_n$ denote the category representing that partial order (with $3^{n}-1$ objects). Remark that $\partial\Box^n$ can be obtained as a directed deformation retract of the Euclidean complex $[-1,1]^n\setminus ]0,1]^n$. Hence, as will be shown in Section \ref{ss:Euclid}, products of (reachable) open subcubes will be contained in the same component. 

The homotopy types of trace spaces in $\partial\Box^n$
were determined by Ziemia\'{n}ski \cite{Ziemianski:18}: For $n$-tuples $\mb{e}\le\mb{f}$, let $d_i=f_i-e_i$ (where we replace $*$ with $0.5$). Let $n(\mb{e},\mb{f})$ denote the cardinality of the set $\{ i| d_i=1\}$.
\begin{lemma}\label{lem:hpytype}\emph{\cite{Ziemianski:18}}
Let $\mb{e}\le\mb{f}$. Then trace space $\vec{T}(\partial\Box^n)_{\mb{e}}^{\mb{f}}$ is contractible if there exists $i$ with $d_i=0.5$, and homotopy equivalent to a sphere of dimension $n(\mb{e},\mb{f})-2$ otherwise.
\end{lemma}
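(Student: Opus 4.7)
\medskip

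\textbf{Proof plan.} The statement is due to Ziemia\'{n}ski, but I would organize an independent proof around two building blocks: the contractibility of any trace space inside a full cube $\Box^m$ (two d-paths are linearly homotopic through d-paths), and the classical identification $\vec{T}(\partial\Box^k)_{\mb{0}}^{\mb{1}}\simeq S^{k-2}$. Using Section~\ref{ss:Euclid}, the homotopy type of $\vec{T}(\partial\Box^n)_{\mb{e}}^{\mb{f}}$ depends only on the open subcubes containing source and target, so I may replace $\mb{e}$ and $\mb{f}$ by distinguished representative points $\mb{p},\mb{q}$ whose $k$-th coordinate is $1/2$ whenever the cube has a $*$ in that coordinate and equals the fixed value otherwise.

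For the contractible case, suppose $d_{i_0}=1/2$; without loss of generality $e_{i_0}=0$ and $f_{i_0}=*$ (the alternative $e_{i_0}=*,\ f_{i_0}=1$ is symmetric by reversing the d-structure). Since $\mb{f}\neq(*,\dots,*)$, some coordinate $j\ne i_0$ satisfies $q_j\in\{0,1\}$. The homotopy $\gamma\mapsto H(\gamma,s)$ that replaces the $i_0$-coordinate of $\gamma$ by its pointwise rescaling $(1-s)\gamma_{i_0}(\cdot)$ keeps every path directed and inside $\partial\Box^n$ (the witness coordinate $j$ continues to realise the boundary condition). It is a strong deformation retraction onto the subspace of paths whose $i_0$-coordinate vanishes identically, hence onto trace space inside the face $\{x_{i_0}=0\}\cong\Box^{n-1}$; trace spaces inside a cube are contractible, closing the case.

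For the sphere case, where every $d_i\in\{0,1\}$, split the indices into $I_1=\{i:d_i=1\}$, $I_0=\{i:e_i=f_i\in\{0,1\}\}$, and $I_{*}=\{i:e_i=f_i=*\}$. Coordinates in $I_0$ are frozen along any d-path; coordinates in $I_{*}$ contract linearly because the boundary witness is supplied by indices in $I_1\cup I_0$. These two reductions collapse the trace space onto $\vec{T}(\partial\Box^k)_{\mb{0}}^{\mb{1}}$ (times a contractible factor) with $k=n(\mb{e},\mb{f})$ in the non-degenerate regime, and onto a full-cube trace space (hence contractible) when $I_0\ne\emptyset$, consistently with the degenerate interpretation of $S^{k-2}$ for small $k$. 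The final identification $\vec{T}(\partial\Box^k)_{\mb{0}}^{\mb{1}}\simeq S^{k-2}$ I would establish by induction on $k$: the base $k=2$ gives two contractible staircase components, i.e.\ $S^0$; for $k\ge 3$, cover the trace space by the $k$ open subsets $U_j=\{\gamma:\gamma_j\text{ attains }1\text{ strictly before any other coordinate}\}$ and apply the nerve theorem, each $U_j$ being contractible (paths split as a prefix in $\partial\Box^{k-1}$ followed by an arbitrary concatenation in the face $\{x_j=1\}\cong\Box^{k-1}$) and $(r+1)$-fold intersections being inductively spheres of matching dimension.

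The main obstacle is the nerve computation in the inductive step: checking contractibility of each $U_j$ and identifying $(r+1)$-fold intersections with lower trace spaces of the form $\vec{T}(\partial\Box^{k-r-1})_{\mb{0}}^{\mb{1}}$ via a ``straighten towards the hyperface'' deformation is essentially Ziemia\'{n}ski's cube-chain analysis, and this is where the bulk of the work lies. A secondary technical point is making the contractions in the $I_{*}$-coordinates and the truncations in Step~1 uniform over the whole trace space, so that the claimed homotopy equivalences hold functorially on the entire space rather than merely on individual paths.
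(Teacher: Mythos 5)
The paper offers no proof of this lemma --- it is quoted verbatim from \cite{Ziemianski:18} --- so your sketch can only be judged on its own terms, and as written it has three concrete gaps. (1) In the contractible case your homotopy $\gamma_{i_0}\mapsto(1-s)\gamma_{i_0}$ does not fix the target: $q_{i_0}=1/2$, so the deformed paths no longer end at $\mb{q}$, and the claimed retract (paths with vanishing $i_0$-coordinate) is not even a subspace of $\vec{T}(\partial\Box^n)_{\mb{p}}^{\mb{q}}$; a ``strong deformation retraction'' of this trace space onto it is therefore impossible as stated. (The justification you give for staying in $\partial\Box^n$ is also off when $q_j=1$, since the witness $j$ need not be extremal along the whole path; what actually saves the rescaling is that $\gamma_{i_0}\le 1/2<1$, so each point's own witness survives.) The repair is to compare targets rather than deform paths: let $\mb{q}'$ be $\mb{q}$ with the $i_0$-coordinate set to $0$; every d-path from $\mb{p}$ to $\mb{q}'$ has $i_0$-coordinate identically $0$, hence lies in the full face $\{x_{i_0}=0\}\cong\Box^{n-1}$ and that trace space is contractible, while the extension map along a d-path from $\mb{q}'$ to $\mb{q}$ inside their common closed cube is a homotopy equivalence by exactly the $\wedge\,\mb{y}$-retraction argument in the proof of Proposition~\ref{prp:H-psp}, which applies because $\partial\Box^n$ is a Euclidean cubical complex.

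(2) Your handling of $I_0=\{i:\, e_i=f_i\in\{0,1\}\}$ is not a ``degenerate interpretation of $S^{k-2}$'': if some coordinate is frozen at $0$ or $1$, then, as you correctly compute, every monotone path in the ambient rectangle is automatically in $\partial\Box^n$ and the trace space is contractible; for $\mb{e}=(0,0,0)$, $\mb{f}=(1,1,0)$ in $\partial\Box^3$ this is contractible although $n(\mb{e},\mb{f})=2$ would predict $S^0$. So this case must be treated as a separate contractible clause (or excluded --- the pairs to which the paper applies the lemma, such as $0^k*^{n-k}\le 1^k*^{n-k}$, have no frozen $0/1$ coordinate); your reconciliation hides a genuine discrepancy instead of resolving it. (Also, coordinates with $e_i=f_i=*$ are frozen at $1/2$ along every such path, so they drop out by restriction; no linear contraction is needed.) (3) The core of the lemma, $\vec{T}(\partial\Box^k)_{\mb{0}}^{\mb{1}}\simeq S^{k-2}$, is not actually proved: with $U_j$ defined by ``attains $1$ strictly before any other coordinate'' the sets are pairwise disjoint and miss all paths with ties, so they neither cover nor have the nonempty intersections your induction requires, and the plain nerve theorem needs empty-or-contractible intersections, not intersections that are lower-dimensional spheres; openness of the $U_j$ is also unclear since hitting times are only semicontinuous. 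Since you defer precisely this computation to Ziemia\'{n}ski's cube-chain analysis, the proposal in its current form does not go beyond the citation the paper already makes.
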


\begin{proposition}
For $n>2$, the component categories $\vec{\pi}_0(\partial\Box^n;\alpha, \mc{F}_{\infty})_*^*,\; \alpha\neq 0,$ all agree with the extension category of the partial order category $\mc{P}_n$.
\end{proposition}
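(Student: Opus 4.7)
The plan is to construct a canonical functor $\Phi: E\mc{P}_n \to \vec{\pi}_0(\partial\Box^n; \alpha, \mc{F}_\infty)_*^*$ sending each reachable pair $(\mb{e}, \mb{f})$ of $\mc{P}_n$ to the pair component of $|\mb{e}|^\circ \times |\mb{f}|^\circ$, and to show $\Phi$ is an isomorphism of categories. Well-definedness on objects uses that $\partial\Box^n$ is a directed deformation retract of the Euclidean complex $[-1,1]^n \setminus \,]0,1]^n$, as noted in the lead-up to the proposition: by the component analysis in Section \ref{ss:Euclid} every product of reachable open subcubes of the Euclidean complex is a single pair component, and the retraction transports this to $\partial\Box^n$. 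Essential surjectivity is then automatic, since every reachable pair of points in $\partial\Box^n$ lies in some such product.

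The main work is object-faithfulness: distinct pairs in $\mc{P}_n$ should give distinct pair components. The strategy is to show that for any future psp homotopy flow $H$ on $\partial\Box^n$ one has $H_t(x) \in |\mb{e}(x)|^\circ$ for every $x$ and every $t$. Proposition \ref{prp:hfc}, combined with $H_t(x) \geq x$ coordinate-wise, already pins every vertex of $\partial\Box^n$ with at least two zeros, since such a vertex is the only point of its smallest containing future branch cube $(0_{I_0}, *_{\mathrm{rest}})$ that is coordinate-wise $\ge$ itself. The remaining ``single-zero'' vertices, for instance $v = (0, 1, \ldots, 1)$, are pinned by psp invariance applied to the probe pair $((0, \ldots, 0), v)$: by Lemma \ref{lem:hpytype} this trace space is a sphere $S^{n-3}$, which for $n > 2$ is positive-dimensional, whereas moving $v$ to any neighboring point in $|(*, 1, \ldots, 1)|^\circ$ would render the corresponding trace space contractible (a coordinate with $d_i = 0.5$ appears), contradicting Lemma \ref{lem:noniso}. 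The same probe strategy, using the vertices $(0, \ldots, 0)$, $(1, \ldots, 1)$ and their coordinate-permuted analogues, extends to arbitrary $x \in |\mb{e}|^\circ$: any move of $x$ into a strictly higher open subcube $|\mb{e}'|^\circ$ alters the status of some coordinate in $\mb{e}$ (from $0$ to $*$, $*$ to $1$, or $0$ to $1$) and thereby changes a $d_i$ in some probe's trace space between $\{0, 0.5, 1\}$, switching the sphere dimension or collapsing to contractible. The symmetric past argument pins down the coordinates of $\mb{f}$, and the total case $\alpha = \pm$ is the intersection of both constraints.

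Morphism fullness and faithfulness are then routine: $\mc{P}_n$ is a poset, so there is at most one extension morphism between any two objects; and in $\partial\Box^n$ each pair of comparable open subcubes is connected by an essentially unique d-homotopy class of traces inherited from the product order within each closed subcube, matching the unique extension morphism on the nose.

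The main obstacle will be the object-faithfulness step: one must systematically select, for every candidate subcube-to-subcube move $\mb{e} \rightsquigarrow \mb{e}'$, a probe partner $\mb{f}$ so that $(\mb{e}, \mb{f})$ and the forced image pair $(\mb{e}', \mb{f}')$ end up with genuinely different trace-space homotopy types under Lemma \ref{lem:hpytype}. Encoding this uniformly through the dimension count $n(\mb{e}, \mb{f}) - 2$, rather than case by case, is where most of the combinatorial bookkeeping lives, and it is also where the hypothesis $n > 2$ enters decisively, ensuring that positive-dimensional distinguishing spheres are always available.
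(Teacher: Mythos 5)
Your overall skeleton (objects of $E\mc{P}_n$ correspond to products of open cells, one inclusion via the Euclidean deformation retract, the other via invariance of cells under psp homotopy flows) matches the paper, and your pinning of vertices is fine: multi-zero vertices via Proposition~\ref{prp:hfc} plus coordinatewise monotonicity, single-zero vertices via the probe $(\mb{0},v)$ and Lemma~\ref{lem:hpytype} (note only that for $n=3$ the sphere $S^{n-3}=S^0$ is not positive-dimensional, but it is still non-contractible, so the argument survives). The genuine gap is the step where you claim the ``same probe strategy, using the vertices\dots, extends to arbitrary $x\in|\mb{e}|^\circ$''. It does not: changing some $d_i$ does \emph{not} change the homotopy type when another coordinate with $d_j=0.5$ is present, and there are cell-to-cell moves that no vertex probe can see. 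Concretely, let $x$ lie in the open facet $|1*^{n-1}|^\circ$ and suppose a future psp flow (which fixes all vertices) sends it into $|11*^{n-2}|^\circ$. The only vertices forming a reachable pair with $x$ are $0^n$ and $10^{n-1}$ as sources and $1^n$ as target, and by Lemma~\ref{lem:hpytype} every one of the six relevant trace spaces (to or from $x$ and to or from its image) is contractible, since a coordinate with $d_j=0.5$ survives on both sides when $n\ge 3$; the same happens for the move from $|0*^{n-1}|^\circ$ into $|01*^{n-2}|^\circ$. Detecting these moves requires a probe in a positive-dimensional cell, e.g.\ $w\in|00*^{n-2}|^\circ$, for which the trace space jumps from contractible to $S^0$ --- but such a probe is not a fixed point of the flow, so you must first prove that cell invariant and then control how the probe itself moves. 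This is exactly why the paper's proof is staged: first the cells $\mb{e}_k,\mb{f}_k$ with $k\ge 2$ by mutual probing (Lemmas~\ref{lem:pathcon}, \ref{lem:noniso}, \ref{lem:hpytype}), then the facets via the invariant set $\bigcap\downarrow\!\mb{f}^i_{n-1}$ (Corollary~\ref{cor:downarrow}, Lemma~\ref{lem:invariant}) and via Lemma~\ref{lem:lr}, which substitutes a co-moving partner for a fixed probe, and only then the mixed cells. Without some replacement for these ingredients your object-faithfulness argument cannot be completed, and this is not mere ``combinatorial bookkeeping''.

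A secondary problem is the morphism step: your justification rests on the claim that comparable open subcubes of $\partial\Box^n$ are connected by an essentially unique d-homotopy class of traces, which is false --- for instance the trace space from $|00*^{n-2}|^\circ$ to $|11*^{n-2}|^\circ$ is homotopy equivalent to $S^0$ and so carries two distinct classes. Whether (and how) these classes become identified in the quotient has to be argued from the defining relations (\ref{eq:dE}) and (\ref{eq:flow}) of the component category, not from uniqueness in $\vec{\pi}_1(\partial\Box^n)$; the paper itself is silent on this point, so you cannot lean on it as ``routine''.
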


\begin{proof}
The proof relies entirely on properties of (psp) homotopy flows established in Section \ref{s:homflowprop}.
\begin{enumerate}
\item For $k>1$, a (neutral) psp homotopy flow leaves cells of the form $\mb{e}_k=0^k*^{n-k}$, resp.\ $\mb{f}_k=1^k*^{n-k}$ (regardless of the order of the coordinates) invariant; moreover, also their closures and complements. In fact, Lemma \ref{lem:hpytype} shows that leaving one or both of the cells to the future (or the past) changes the homotopy type of $\vec{T}(\partial\Box^n)_{\mb{e}_k}^{\mb{f}_k}$. Apply Lemma \ref{lem:pathcon} and Lemma \ref{lem:noniso}.
\item Now consider $k=1$ and a cell $\mb{e}_1$ of type $0*^{n-1}$: From 1.\ above and Corollary \ref{cor:downarrow}, we conclude that every psp future homotopy flow leaves the set 
$\bigcap \downarrow\! \mb{f}_{n-1}^i$ invariant (with $\mb{f}_{n-1}^i=(1,\dots ,1, *_i, 1,\dots ,1)$). This intersection is equal to the union of all cubes $e$ with no $1$-coordinate at all (here we use $n>2$!). But each cell of type $\mb{e}_1$ is a maximal cube in that set with respect to the partial order. All cubes below $\mb{e}_1$ are of type $\mb{e}_k,\; k>1,$ and are thus invariant; cf 1.\ above. Hence $\mb{e}_1$ is so, as well.

A cell of type $\mb{f}_1=1*^{n-1}$ cannot be connected by a future psp flow with any of the (invariant) cubes in its future. Assume there is a psp future homotopy flow $H$ connecting one of the lower boundary cubes $10^i*^{n-i-1}, \; i>0,$ with $\mb{f}_1$. Then, using Lemma \ref{lem:lr}, $H$ would connect a cell of type $1^20*^{n-3}$ with one of the invariant cells $\mb{f}_k=1^k*^{n-k},\; k>1$. Contradiction!
\item Now consider cubes of type $\mb{e}=0^i*^j1^k,\; i,j,k\ge 1$. Then  $\mb{e}$ is contained in the closure of a cube of type $0^i*^{n-i}$ (invariant by (1) and (2) above). We conclude from Lemma \ref{lem:invariant} that a psp future homotopy flow $H$ departing $\mb{e}$ will end in a cube of type $0^i*^{j-l}1^{k+l},\; l\ge 0$. If $l> 0$, then Lemma \ref{lem:lr} tells us that $H$ would connect a cube of type $*^j1^{i+k}$ (invariant by 1.\ and 2.\ above) with a different cube. Contradiction!\\
 Likewise, a vertex $\mb{e}$ of type $0^{n-k}1^k,\; 1<k<n,$ does not leave $\mb{e}$ under a psp homotopy flow $H$ with $H_0=id$. If so, it would have to end in a cube of type $0^i*^j1^k,\; j>0$. Lemma \ref{lem:lr} implies that the same psp homotopy connects a ``complementary'' cube of type $0^j*^i1^k$ with a cube of type $*^{n-k}1^k$. This contradicts (1) and (2) above. 
\item Since all cubes of the types considered in (3) above are invariant under a psp homotopy flow, none of them can be reached from another cube under a psp homotopy flow $H$ departing from a different cube in its past.
\end{enumerate}
For past pair components, one may apply similar arguments - or consider the reverse d-space that is d-homeomorphic to the original one: Just exchange  $0$ s and  $1$s!
\end{proof}
Remark that the components in this case correspond to products of reachable components of the fundamental category considered in \cite{FGHR:04} (for $n=3)$ 
and for Ziemia\'{n}ski's stable components \cite{Ziemianski:18}.

\subsection{Spaces with directed loops}\label{ss:dirloops}
\subsubsection{Directed circle}\label{sss:dc} The directed circle $\vec{S}^1$ is the pre-cubical set with one $0$-cell and one $1$-cell. Its geometric realization $|\vec{S}^1|$ is a circle on which directed paths proceed counter-clockwise, ie., they are images of non-decreasing paths under the universal covering $\exp : \vec{\mb{R}}\to |\vec{S}^1|$.  All trace spaces $\vec{T}(\vec{S}^1)_x^y,\; x,y\in S^1$, are homotopy equivalent to the discrete space indexed by the non-negative integers $\mathbf{N}_0$.

A directed degree one map $f: S^1\to S^1$ homotopic to the identity induces a homotopy equivalence $\vec{T}(f): \vec{T}(\vec{S}^1)_x^y\to \vec{T}(\vec{S}^1)_{fx}^{fy}$ on \emph{all} trace spaces if and only if it is a directed homeomorphism: Since $f\simeq id$, it is onto. Assume $fx=fy=z$ for $x\neq y$. One of the counter-clockwise arcs from $x$ to $y$, resp.\ from $y$ to $x$ maps to a constant path, the other (without restriction the one from $y$ to $x$), to a d-path from $z$ to itself with winding number one.  But then $\vec{T}(f):\vec{T}(\vec{S}^1)_y^x\to\vec{T}(\vec{S}^1)_z^z$ misses the component given by constant maps!

For every two elements $x,y\in S^1$, there is a future psp homotopy flow $H$ of counter-clockwise rotations (hence directed homeomorphisms) with $H_1(x)=y$. As a result, all pairs $(x,x)$ on the diagonal $\Delta$ are contained in the same component. The diagonal is of course invariant under any map. On the other hand, we have just seen that no pair $(x,y)$ in the complement $\Gamma =(S^1\times S^1)\setminus\Delta$ can be connected to an element on the diagonal by a psp d-map (a homeomorphism!).

Let $(x,y), (x',y')$ denote two pairs in the complement $\Gamma$ of the diagonal. After a rotation, we may assume that $x=x'$. There is a directed homeomorphism (image of a piecewise linear map on $\mb{R}$ under the exponential map) that fixes $x$ and maps $y$ into $y'$ or $y'$ into $y$.
 
As a result, the future pair category of $\vec{\pi}_0(\vec{S}^1;+,\mc{F})_*^*$ has two objects given by the diagonal $\Delta$ and its complement $\Gamma:= (S^1\times S^1)\setminus \Delta$. Endomorphisms on both objects correspond to the non-negative integers $\mb{N}_{\ge 0}$. The morphisms $Mor(\Delta , \Gamma)$ correspond to $\mb{N}_{\ge 0}$, as well, whereas $Mor(\Gamma , \Delta)$ corresponds to the positive integers $\mb{N}$; composition corresponds to addition. Note that $\Delta$ and $\Gamma$ are not isomorphic!

All other pair component categories $\vec{\pi}_0(\vec{S}^1;\alpha,\mc{F})_*^*$ are isomorphic among each other.

\begin{remark}
\begin{enumerate}
\item The result for the directed circle is slightly different from the one obtained in \cite{Raussen:07} where we departed from the extension category of the fundamental category and localized weakly invertible \emph{extensions} instead of d-maps.
\item Remark, that the pair component category \emph{does} distinguish between a directed interval $J\subset\mb{R}$ (Section \ref{sss:int}) and the directed circle $\vec{S}^1$.
\end{enumerate}  
\end{remark}

\subsubsection{Directed torus} Now consider the directed $n$-torus $\vec{T}^n=(\vec{S}^1)^n,\; n\in\mb{N}$. All trace spaces $\vec{T}(\vec{T}^n)$ are homotopy equivalent to a discrete space indexed by $\mb{N}_{\ge 0}^n$. Let us again verify that a psp-d-map $f$ homotopic to the identity is necessarily a homeomorphism. It must be onto since it preserves the fundamental homology class. Assume $f(x)=f(y)=z$ for $x,y\in T^n, x\neq y$. Consider a d-path in $T^n$ from $x$ via $y$ to $x$ of $i$-degree $d_i=0$ if $x_i=y_i$ and $d_i=1$ if $x_i\neq y_i$. The two pieces map under $f$ to paths of multidegree $\delta= (\delta_1,\dots ,\delta_n)$, $\varepsilon = (\varepsilon_1,\dots , \varepsilon_n)$ each; note that $\delta_i + \varepsilon_i = (d_1,\dots ,d_n)$. The maps $\vec{T}(f): \vec{T}(\vec{T}^n)_x^y\to\vec{T}(\vec{T}^n)_z^z$ and $\vec{T}(f):\vec{T}(\vec{T}^n)_y^x\to\vec{T}(\vec{T}^n)_z^z$ have as image subsets whose homotopy types correspond to $\delta +\mb{N}_{\ge 0}^n$, resp.\ $\varepsilon +\mb{N}^n_{\ge 0}$. They cannot both be surjective, ie have image all homotopy types corresponding to $\mb{N}_{\ge 0}^n$.

\begin{proposition}\label{prop:torus}
The pair component category $\vec{\pi}_0(\vec{T}^n;\alpha,\mc{F})_*^*$ is isomorphic to the product $\prod_1^n\vec{\pi}_0(\vec{S}^1;\alpha,\mc{F})_*^*$ This is a category with $2^n$ objects in the set $\{\Delta , \Gamma\}^n$. If one lets $\Delta$ correspond to $0$ and $\Gamma$ to $1$, and hence an object to a bit vector $\mb{d}\in \{ 0,1\}^n$, then $Mor (\mb{d}, \mb{d}')$ is a product of factors $\mb{N}_{\ge 0}$ and $\mb{N}$. A factor corresponds to $\mb{N}$ exactly for pairs $(d_i,d_i')=(1,0)$. Composition corresponds to coordinatewise addition.
\end{proposition}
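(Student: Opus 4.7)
The plan is to apply Proposition \ref{prop:prod} and then promote the resulting comparison functor to an isomorphism by means of a splitting result for psp homotopy flows. More precisely, by Proposition \ref{prop:prod} the canonical functor
\[F:\prod_{i=1}^n\vec{\pi}_0(\vec{S}^1;\alpha,\mc{F})_*^*\longrightarrow\vec{\pi}_0(\vec{T}^n;\alpha,\mc{F})_*^*\]
is onto on objects and full on morphisms. The remaining task is to show that $F$ is injective on objects and faithful on morphisms.

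The key lemma I would prove is the following splitting: \emph{every psp homotopy flow $H:\vec{T}^n\times\vec{I}\to\vec{T}^n$ (resp.\ neutral dihomotopy, for $\alpha=0$) has the form $H(x_1,\dots,x_n,t)=(h_1(x_1,t),\dots,h_n(x_n,t))$ with each $h_i$ a psp homotopy flow on $\vec{S}^1$.} The argument uses the rigidity statement from the paragraph preceding the proposition: each level $H_t$ is psp and homotopic to the identity, hence a homeomorphism. Lifting to the universal cover $\mb{R}^n$, the lift $\tilde H_t=(f_1,\dots,f_n)$ is $\mb{Z}^n$-equivariant (acting trivially on $\pi_1$ since $\tilde H_t\simeq id$), so $g_i:=f_i-x_i$ descends to a continuous function on $\vec{T}^n$ and is in particular $\mb{Z}^n$-periodic. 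Since $\tilde H_t$ is a d-map, each $f_i$ is non-decreasing in every coordinate $x_j$; for $j\neq i$ this monotonicity combined with periodicity forces $g_i$ to be constant in $x_j$. Hence $f_i$ depends only on $x_i$, which gives the desired product decomposition, and each $h_i$ is automatically psp (d-homeomorphisms of $\vec{S}^1$ act bijectively on the $\mb{N}_{\ge 0}$-indexed components of trace spaces).

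From the splitting, $(x,y)$ and $(x',y')$ in $\vec{(T^n)^2}$ are linked by a single psp homotopy flow if and only if $(x_i,y_i)$ and $(x_i',y_i')$ are linked by a psp homotopy flow on the $i$-th factor for every $i$; zig-zag closure gives the analogous statement for components. This proves injectivity on objects and identifies the $2^n$ objects with $\{\Delta,\Gamma\}^n\cong\{0,1\}^n$. Faithfulness on morphisms follows from the same splitting applied to the relations (\ref{eq:dE}) and (\ref{eq:flow}): an extension $(\sigma *, *\tau)$ in $E\vec{\pi}_1(\vec{T}^n)$ factors coordinate-wise under the product isomorphism $E\vec{\pi}_1(\vec{T}^n)\cong\prod_i E\vec{\pi}_1(\vec{S}^1)$ of Section \ref{ss:prod}, and the inessential action respects this product. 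The morphism computation then follows by invoking Section \ref{sss:dc}: the only coordinate pair giving $\mb{N}$ rather than $\mb{N}_{\ge 0}$ is $(\Gamma,\Delta)\leftrightarrow(1,0)$, and composition is coordinate-wise addition of winding numbers.

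The main obstacle I anticipate is the splitting claim. The monotonicity-plus-periodicity argument on $\mb{R}^n$ is short, but one has to be careful that $H$ itself (not only each slice $H_t$) assembles into a product, and to verify that the coordinate factors $h_i$ are genuine psp homotopy flows on $\vec{S}^1$ (as d-maps on $\vec{S}^1\times\vec{I}$, resp.\ $\vec{S}^1\times I$ in the neutral case). Once this is in place, the remaining comparison of categories is formal: full+faithful+bijective-on-objects gives the isomorphism asserted in the proposition.
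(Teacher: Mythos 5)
Your proposal is correct and follows the same overall skeleton as the paper: invoke Proposition \ref{prop:prod} and then show that inessential self-maps of $\vec{T}^n$ split as products of d-homeomorphisms of the circle factors, so that the diagonal/off-diagonal type of each coordinate of a pair is an invariant of its component. Where you differ is in how the splitting is proved. The paper takes two points with equal first coordinate whose images have distinct first coordinates and derives a contradiction from a trace-space extension diagram together with preservation of multidegrees; its argument thus uses the psp hypothesis directly and only splits the inessential map itself. You instead lift each level $H_t$ to the universal cover, use $\mb{Z}^n$-equivariance (since $H_t\simeq id_{\vec{T}^n}$ --- small typo: it is $H_t$, not $\tilde H_t$, that is homotopic to the identity) and the observation that each coordinate of the lift is non-decreasing in every variable while periodic in the off-diagonal variables, hence constant in them. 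This is more elementary and proves more: every endo-d-map homotopic to the identity splits, and the entire flow, not just its time-one map, is a product; the psp hypothesis enters only through the rigidity statement preceding the proposition, which makes each circle factor a homeomorphism and hence a psp flow on $\vec{S}^1$. For injectivity on objects only the endpoint maps of the zig-zags are needed, but splitting the whole flow is what lets you argue that the relations (\ref{eq:dE}) and (\ref{eq:flow}) decompose coordinatewise; your faithfulness sketch is accordingly more explicit than the paper's proof, which only addresses injectivity on objects and leaves the morphism comparison implicit. Both routes then read off the morphism sets from the circle computation in Section \ref{sss:dc}.
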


\begin{proof}
According to Proposition \ref{prop:prod}, we need only show that the quotient functor from the product $\prod_1^n\vec{\pi}_0(\vec{S}^1; \alpha, \mc{F})_*^*\to\vec{\pi}_0(\vec{T}^n;\alpha,\mc{F})_*^*$ is injective on objects, as well. To achieve this, we will show that every inessential d-map $F:\vec{T}^n \to\vec{T}^n$ is a product of d-homeomorphisms $f_i: \vec{S}^1\to\vec{S}^1,$ $1\le i\le n$, ie $F=f_1\times\cdots\times f_n$:

Let $F:\vec{T}^n \to\vec{T}^n$ denote an inessential d-map with $F(x_1,\mb{x}_2)=(y_1,\mb{y}_2)$ and $F(x_1,\mb{x}'_2)=(y'_1,\mb{y}'_2)$, $x_1, y_1, y'_1\in S^1, \mb{x}_2, \mb{x}'_2, \mb{y}_2, \mb{y}'_2\in T^{n-1}$ and $y_1\neq y'_1$. (If we had an inessential map $G: \vec{T}^n \to\vec{T}^n$ relating these pairs in the reverse direction, we may consider $F=G^{-1}$ since $G$ has to be a directed homeomorphism). After a coordinatewise rotation, we may assume that $(y_1,\mb{y}_2)=(x_1,\mb{x}_2)$ -- and $y'_1\neq x_1$. Let $\sigma=(c_{x_1}, \sigma_2)$ denote a d-path from $(x_1,\mb{x}'_2)$ to $(x_1,\mb{x}_2)$, constant in the first variable. Consider the induced diagram of trace spaces
\[\xymatrix{\vec{P}(\vec{T}^n)_{(x_1,\mb{x}_2)}^{(y'_1,\mb{y}'_2)}\ar[r]^{*F(\sigma )} & \vec{P}(\vec{T}^n)_{(x_1,\mb{x}_2)}^{(x_1,\mb{x}_2)}\ar@{^{(}->}[r] &\Omega (T^n; (x_1,\mb{x}_2))\\
\vec{P}(\vec{T}^n)_{(x_1,\mb{x}_2)}^{(x_1,\mb{x}'_2)}\ar[u]^{\vec{T}(F)}\ar[r]^{*\sigma} & \vec{P}(\vec{T}^n)_{(x_1,\mb{x}_2)}^{(x_1,\mb{x}_2)}\ar[u]_{\vec{T}(F)}\ar@{^{(}->}[r] & \Omega (T^n; (x_1,\mb{x}_2))\ar[u]_{\Omega (F)}
}\]  
The image of the lower extension map $*\sigma$ contains loops of multidegree $(0,\mb{d})$.
The image of the upper extension map $*F(\sigma )$ map can only contain loops of multidegree $(d_1,\mb{d})$ with $d_1>0$. This contradicts the fact that an endo map on $T^n$ that is homotopic to the identity has to  preserve multidegrees.
\end{proof}

\begin{remark}

In contrast to what happens to previously considered component categories of the fundamental category (\cite{FGHR:04,GH:07}), this example shows that localized and component categories constructed from pair categories by localizing the effects of psp-d-maps make sense also for d-spaces with \emph{non-trivial directed loops}.
\end{remark}

We postpone the investigation of the pair component categories for the d-space corresponding to Dubut's example to Section \ref{sss:compDubut}.

\section{Pre-cubical sets and order pair components}\label{s:precub}
In general, it seems to be hard to determine the pair component categories introduced in Section \ref{sss:pcc} algorithmically. For pre-cubical sets, cf Section \ref{sss:pcs}, we will now define and describe a related finer component category that arises by inverting \emph{fewer} morphisms and that is easier to comprehend and to determine.  
\subsection{Interval induced maps on a pre-cubical set}\label{ss:intind}
The following construction is essentially contained in \cite{Ziemianski:18}: Let $h:\vec{I}\to\vec{I}$ denote a reparametrization of the unit interval $I$, i.e, a non-decreasing and surjective continuous map. All such reparametrizations form a convex and hence contractible space (in the topology inherited from the compact-open topology). The homeomorphisms within the reparametrizations (the strictly increasing ones) form likewise a convex and hence contractible subspace. Particular reparametrizations are of \emph{future type} if  $h(t)\ge t$ for all $t\in I$, resp.\ of \emph{past type} if $h(t)\le t,\; t\in I$. Algebraically, interval reparametrizations form a monoid under composition (with those of future, resp.\ past type as submonoid) and homeomorphisms form a group (with subgroups of future, resp.\ past type).

Consider a pre-cubical set $X$. Every interval reparametrization $h$ can be used to construct an endo-d-map $h_X$ on the geometric realization $|X|$ defined by $[c, (t_1,\dots ,t_n)]\mapsto [c;(ht_1,\dots ,ht_n)]$. By definition, $h_X$ is a \emph{cube-preserving} map. It is well-defined with respect to the boundary relations on $X$; for this to be true, in general, it is necessary to use the \emph{same} reparametrization $h$ on each coordinate.  

An endo-d-map $h$ in $X$ will be called \emph{interval induced} if it can be described as $h_X$ for a suitable interval reparametrization $h$.

\begin{remark}
\begin{enumerate}
\item For a Euclidean complex, cf Section \ref{ss:Euclid}, one may be less careful and select different reparametrizations for each of the $n$ coordinates.
\item If $h$ is a homeomorphic d-map with inverse $h^{-1}$, then $h_X$ and $(h^{-1})_X=h_X^{-1}$ are inverse homeomophic endo-d-maps.
\item It is clear that the interval induced endo-d-maps on $X$ form a submonoid  of the monoid $d(X)$ of all endo-d-maps; the interval induced endo-d-homeo\-mor\-phisms on $X$ form a subgroup $\Sigma_{\mc{I}}^{\alpha}(X)$ of the group of all endo-d-homeomorphisms.
\end{enumerate}
\end{remark}

\begin{lemma}\label{lem:intindiness}
An interval induced endo-d-map $h_X$ induced by an interval homeomorphism $h$ is neutrally inessential. If it is of future type (resp.\ past type), it is future (resp.\ past) inessential: $\Sigma^{\alpha}_{\mc{I}}(X)\subset\Sigma^{\alpha}_{\mc{F}}(X)$.
\end{lemma}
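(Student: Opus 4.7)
The plan is to construct an explicit psp homotopy flow from $id_X$ to $h_X$ by linearly interpolating between $id_I$ and $h$ in the space of interval reparametrizations, and then transporting that one-parameter family into $X$ through the interval-induced construction. Concretely, I would set $h_s(t) := (1-s)t + sh(t)$ for $s,t \in I$ and define $H: X \times I \to X$ by $H_s := (h_s)_X$, so that $H_0 = id_X$ and $H_1 = h_X$.

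First I would verify that each $h_s$ is itself an interval homeomorphism: since $h$ is a homeomorphism of $I$ onto itself, it satisfies $h(0)=0$, $h(1)=1$, and is strictly increasing, hence so is every convex combination $h_s$; moreover $h_s(0)=0$ and $h_s(1)=1$. Consequently $(h_s)_X$ is a well-defined cube-preserving endo-d-map on $|X|$ with a two-sided inverse $((h_s)^{-1})_X$, so it is a d-homeomorphism. This immediately implies that each $\vec{T}(H_s)$ is a homeomorphism of trace spaces, hence an $\mc{F}$-equivalence for every admissible $\mc{F}$, so that $H$ is psp. Continuity of $H$ in both variables follows from the continuous dependence of $h_s$ on $(s,t)$ together with the quotient description of $|X|$.

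Next I would check the directional content. Define $F(t,s) := h_s(t) = t + s(h(t)-t)$ on $I^2$. In the future case $h(t) \ge t$, so $\partial F/\partial s = h(t)-t \ge 0$ and $F$ is non-decreasing in $s$; it is always non-decreasing in $t$ because $h_s$ is. Thus $F$ is coordinatewise non-decreasing, which is exactly the condition needed to conclude that the cube-by-cube formula $H([c,(t_1,\dots,t_n)], s) = [c, (F(t_1,s),\dots,F(t_n,s))]$ sends d-paths in $X \times \vec{I}$ (which are locally non-decreasing in each coordinate of each cube and in $s$) to d-paths in $X$; hence $H$ is a d-map $X \times \vec{I} \to X$, i.e., a future homotopy flow from $id_X$ to $h_X$. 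For the past case $h(t) \le t$, the reparametrized homotopy $H'_s := H_{1-s}$ provides a past homotopy flow from $h_X$ to $id_X$ by the same monotonicity argument applied to $F(t,1-s)$. For the neutral statement, no constraint on $h$ beyond being a homeomorphism is needed: $H$ is already a dihomotopy because each $H_s$ is a d-map and the whole family depends continuously on $s$.

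The only delicate point is the directional verification in paragraph three, and in particular justifying that coordinatewise monotonicity of $F$ in $(t,s)$ is really enough to preserve d-paths across cube boundaries. Since interval-induced maps are defined by applying the \emph{same} reparametrization in every coordinate and on every cube, compatibility with the boundary identifications $[d_i^\alpha(c),x] \sim [c, \delta_i^\alpha(x)]$ is automatic, so the local cube-by-cube check suffices; the cubewise decomposition of d-paths in cubical complexes (Definition \ref{df:pcs}(3)) then promotes the local conclusion to a global one. With these checks in place, the three assertions of the lemma and the inclusion $\Sigma^{\alpha}_{\mc{I}}(X) \subset \Sigma^{\alpha}_{\mc{F}}(X)$ follow directly from Definition \ref{def:psp}.
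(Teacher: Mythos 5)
Your proof is correct and follows essentially the same route as the paper: the convex combination $h_s=(1-s)\,id_I+s\,h$ of interval homeomorphisms induces cubewise a homotopy flow $H_s=(h_s)_X$ from $id_X$ to $h_X$, whose directionality comes from $h(t)\ge t$ (resp.\ $h(t)\le t$) and whose psp property comes from each level map being a d-homeomorphism, hence inducing homeomorphisms of trace spaces. Your explicit check of compatibility with the boundary identifications and of coordinatewise monotonicity is a welcome elaboration of details the paper leaves implicit, but it is the same argument.
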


\begin{proof}
A linear reparametrization homotopy $H:\vec{I}\times I\to \vec{I}$  arises by convex combination of the identity map $id_I$ and the chosen reparametrization $h$. It consists of d-maps $H_t$, and it is an increasing/decreasing homotopy (in the $I$-variable) if $h$ is of future, resp.\ of past type. 
The homotopy $H$ induces the homotopy flow $H_X:X\times I\to X$ given by $H_X([c,e],t)=([c,H(e,t)])$ on $X$ ending at $h_X$.

If $h$ is a homeomorphism (ie injective), it induces a neutral, future, resp.\ a past homotopy flow $H_X: X\times I\to X,\; H_X(-,t)=(H_t)_X(-)$ that consists of d-homeo\-mor\-phisms; in particular, of homotopy equivalences. Each induced map $(H_t)_X: X\to X$ induces thus homeomorphisms on path and trace spaces (with the map induced by the inverse homeomorphism as inverse). Hence it is $\mc{F}$-inessential.
\end{proof}

\emph{Question.} Is the statement in Lemma \ref{lem:intindiness} true for general interval induced endo-d-maps?

Like in Section \ref{sss:pcc}, the monoid $\Sigma^{\alpha}_{\mc{I}}(X)$ of $\alpha$-interval induced d-homeomorphisms on $X$, $\alpha = +,-,0,\pm$, gives rise to wide subcategories (same pair objects) $\Sigma_{\mc{I}}^{\alpha}(X)\subset \Sigma_{\mc{F}}^{\alpha}(X)\subset d(X)$ resp.\ $\Sigma_{\mc{I}}^{\alpha}E\vec{\pi}_1(X)\subset \Sigma_{\mc{F}}^{\alpha}E\vec{\pi}_1(X)\subset dE\vec{\pi}_1(X)$. In the remaining part of the paper, we will focus on the localized category $\Sigma_{\mc{I}}^{\alpha}E\vec{\pi}_1(X)[\Sigma^{\alpha}_{\mc{I}}(X)^{-1}]$ and the (pair) component category, denoted $\vec{\pi}_0(X;\alpha, \mc{I})_*^*$. 

The inclusion of inverted subcategories  $\Sigma^{\alpha}_{\mc{I}}(X)\subset \Sigma^{\alpha}_{\mc{F}}(X)$ gives rise to quotient functors\\ $\Sigma^{\alpha}_{\mc{I}}E\vec{\pi}_1(X)[\Sigma^{\alpha}_{\mc{I}}(X)^{-1}]\to \Sigma_{\mc{F}}^{\alpha}E\vec{\pi}_1(X)[\Sigma^{\alpha}_{\mc{F}}(X)^{-1}]$ between localized categories resp.\ $\vec{\pi}_0(X;\alpha, \mc{I})_*^*\to\vec{\pi}_0(X;\alpha, \mc{F})_*^*$ between quotient categories. In the following, we will show that, for a finite pre-cubical set $X$, the ``finer'' category $\vec{\pi}_0(X;\alpha, \mc{I})_*^*$ has finitely many objects and conclude that this also holds for the pair component categories $\vec{\pi}_0(X;\alpha, \mc{F})_*^*$.

\subsection{Order equivalence and order pair components}
\begin{definition}
\begin{enumerate}
\item Two vectors $\mb{z}=[z_i],\mb{z}'=[z_i']\in\mb{R}^k$ are called \emph{order equivalent} if and only if $z_i R z_j\Leftrightarrow z_i' R z_j'$ for all $1\le i,j\le k$ and $R\in\{<, =, >\}$.
\item Let $\mb{x},\mb{x}'\in\mb{R}^n,\; \mb{y},\mb{y}'\in\mb{R}^m$. The pairs $(\mb{x},\mb{y})$ and $(\mb{x}',\mb{y}')$ are called \emph{order equivalent}
if and only if the vectors $[\mb{x},\mb{y}]$ and $[\mb{x}',\mb{y}']\in\mb{R}^{n+m}$ are order equivalent.
\end{enumerate}
\end{definition}

\begin{lemma}\label{lem:order}
\begin{enumerate}
\item Two vectors $\mb{z}=[z_i],\mb{z}'=[z_i']\in\mb{R}^k$ are order equivalent if and only if there exists a d-homeomorphism $h:\vec{I}\to\vec{I}$ with $h(z_i)=z'_i$.
\item Let $X$ denote a pre-cubical set with an $n$-cell $c$ and an $m$-cell $d$. Let $\mb{x}, \mb{x}'\in I^n, \mb{y}, \mb{y}'\in I^m$ such that $(\mb{x},\mb{y})$ and $(\mb{x}',\mb{y}')$ are order equivalent.
Then there exists an interval induced endo-d-homeomorphism $h_X$ with $h_X([c,\mb{x}])=([c,\mb{x}'])$ and $h_X([d,\mb{y}])=[d,\mb{y}']$.
\end{enumerate}
\end{lemma}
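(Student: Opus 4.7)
The plan is to reduce part (2) to part (1), and handle part (1) by direct piecewise-linear interpolation.

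For part (1), the ``if'' direction is essentially free: any d-homeomorphism $h:\vec{I}\to\vec{I}$ is a strictly increasing continuous bijection of $[0,1]$, hence preserves each of the three relations $<,=,>$ pointwise, so $h(z_i) = z'_i$ forces $(\mathbf{z}, \mathbf{z}')$ to be order equivalent. For the ``only if'' direction, I would first augment both vectors with $0$ and $1$ (as endpoints of $\vec{I}$), note that a d-homeomorphism of $\vec{I}$ necessarily fixes these endpoints, and then list the \emph{distinct} values appearing in $\mathbf{z}$ in strictly increasing order as $0 = w_0 < w_1 < \cdots < w_r = 1$. The order-equivalence hypothesis guarantees that the corresponding values in $\mathbf{z}'$ are likewise distinct and occur in exactly the same positions, giving $0 = w'_0 < w'_1 < \cdots < w'_r = 1$. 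Define $h$ to be the unique map that sends $w_j$ to $w'_j$ and is linear on each subinterval $[w_{j-1},w_j]$; this is a piecewise linear strictly increasing self-homeomorphism of $\vec{I}$ satisfying $h(z_i)=z'_i$ by construction.

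For part (2), I would concatenate $\mathbf{x}$ and $\mathbf{y}$ into a single vector $[\mathbf{x},\mathbf{y}]\in\mathbb{R}^{n+m}$, and similarly $[\mathbf{x}',\mathbf{y}']$; the hypothesis is precisely that these two concatenated vectors are order equivalent in the sense of (1). Applying (1) produces a single d-homeomorphism $h:\vec{I}\to\vec{I}$ with $h(x_i)=x'_i$ for all $i\le n$ and $h(y_j)=y'_j$ for all $j\le m$ simultaneously. By the definition of the interval-induced endo-d-map from Section~\ref{ss:intind}, we then get $h_X([c,\mathbf{x}]) = [c,(h(x_1),\ldots,h(x_n))] = [c,\mathbf{x}']$ and analogously $h_X([d,\mathbf{y}])=[d,\mathbf{y}']$. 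The map $h_X$ is a homeomorphism because $h$ is, and is well-defined with respect to the face identifications because the \emph{same} reparametrisation $h$ is applied to every coordinate (as required by the discussion in Section~\ref{ss:intind}).

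The only subtle point is making part (1) genuinely work at the endpoints of $I$: if $z_i\in\{0,1\}$ but $z'_i$ is not the corresponding endpoint (or vice versa), no d-homeomorphism of $\vec{I}$ can match them, so one must observe that order equivalence within $[0,1]$ (augmented with $0$ and $1$, which is the case relevant for cube coordinates in part (2)) forces compatibility at the endpoints. This is the minor bookkeeping hurdle; everything else is immediate from the piecewise linear construction and the cube-preserving definition of $h_X$.
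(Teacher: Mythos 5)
Your argument is the paper's own (very terse) proof made explicit and is correct: the ``if'' direction is immediate since a d-homeomorphism of $\vec{I}$ is strictly increasing, the ``only if'' direction is the piecewise-linear interpolation through the common order pattern, and (2) follows by applying (1) to the concatenated vector $[\mb{x},\mb{y}]\in\mb{R}^{n+m}$ and invoking the interval-induced map $h_X$, well defined precisely because the \emph{same} reparametrization is used in every coordinate. The endpoint caveat you flag is genuine and is silently ignored in the paper's one-line proof: since any d-homeomorphism of $\vec{I}$ fixes $0$ and $1$, the ``only if'' direction (and hence (2), where cube coordinates may lie on the boundary) really requires reading order equivalence as also recording the relations of each coordinate to $0$ and $1$ --- exactly the information carried by the ordered partitions $0\le x_{i_1}\le\cdots\le x_{i_k}\le 1$ used later to describe the subsimplices --- whereas under the literal Definition the vectors $\mb{z}=(0)$ and $\mb{z}'=(1/2)$ are order equivalent yet cannot be matched by any d-homeomorphism.
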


\begin{proof}
A d-homeomorphism $h$ produces clearly order equivalent vectors $\mb{z}=[z_i]$ and $\mb{z}'=[hz_i]$. Given order equivalent vectors $\mb{z}, \mb{z}'$, one may choose a piecewise linear d-homeomorphism with $h(z_i)=z_i'$. (2) follows from (1).
\end{proof}

The following lemma concerning least upper bounds $\mb{x}\vee\mb{x'}$ resp.\ greatest lower bounds $\mb{x}\wedge\mb{x'}$ is straightforward: 

\begin{lemma}\label{lem:order2}
If two pairs $(\mb{x},\mb{y})$ and $(\mb{x}',\mb{y}')$ are order equivalent, then they are both order equivalent to their least upper bounds $(\mb{x}\vee\mb{x}', \mb{y}\vee\mb{y'})$  and greatest lower bounds $(\mb{x}\wedge\mb{x}', \mb{y}\wedge\mb{y}')$, as well. As a consequence
\begin{enumerate} 
\item There exists a $(\Sigma^+_{\mc{I}})$-zig-zag morphism $(\mb{x},\mb{y})\to (\mb{x}\vee\mb{x}',\mb{y}\vee\mb{y}')\leftarrow (\mb{x}',\mb{y}')$.
\item There exists a $(\Sigma^-_{\mc{I}})$-zig-zag-morphism $(\mb{x},\mb{y})\leftarrow (\mb{x}\wedge\mb{x}',\mb{y}\wedge\mb{y}')\to (\mb{x}',\mb{y}')$.
\end{enumerate}
\end{lemma}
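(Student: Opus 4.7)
The plan is to first establish the order equivalence assertion by an elementary case analysis on pairs of coordinates, and then to invoke Lemma \ref{lem:order} together with a refinement that keeps track of the ``future'' versus ``past'' character of the interpolating reparametrization.

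First I would treat the order equivalence claim. Writing $\mb{u}:=[\mb{x},\mb{y}]$ and $\mb{u}':=[\mb{x}',\mb{y}']$ for the concatenated vectors in $\mb{R}^{n+m}$, the hypothesis is that $u_i\, R\, u_j\Leftrightarrow u_i'\, R\, u_j'$ for all $i,j$ and $R\in\{<,=,>\}$. Setting $w_i:=\max(u_i,u_i')$, I would verify $u_i\, R\, u_j\Leftrightarrow w_i\, R\, w_j$ by checking the three cases: if $u_i<u_j$ then also $u_i'<u_j'$, so both entries defining $w_j$ strictly exceed both entries defining $w_i$, hence $w_i<w_j$; the case $u_i=u_j$ forces $u_i'=u_j'$, so $w_i=w_j$; and the strict case $u_i>u_j$ is symmetric. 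This shows that $(\mb{x},\mb{y})$ and $(\mb{x}\vee\mb{x}',\mb{y}\vee\mb{y}')$ are order equivalent, and the symmetric argument applied to $(\mb{x}',\mb{y}')$ gives the other half. The coordinatewise minimum is handled identically.

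Next I would promote this to the categorical statement (1) using Lemma \ref{lem:order}(2). That lemma produces an interval induced endo-d-homeomorphism $h_X$ sending $(\mb{x},\mb{y})$ to $(\mb{x}\vee\mb{x}',\mb{y}\vee\mb{y}')$. The key refinement I need is that $h$ can be chosen of future type. To achieve this, I would take $h$ to be the piecewise linear reparametrization through the nodes $\{0\}\cup\{u_i\}\cup\{1\}$ with $h(0)=0$, $h(1)=1$, and $h(u_i)=w_i$; the values $u_i\le w_i$ at every node mean that the piecewise linear function $h(t)-t$ is non-negative at every node. On each linear segment, the affine function $h(t)-t$ attains its minimum at an endpoint, so it stays non-negative throughout, giving $h\ge\mathrm{id}$ on $\vec{I}$. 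Hence $h_X\in\Sigma^+_{\mc{I}}(X)$. Applying the same construction to produce $k_X\in\Sigma^+_{\mc{I}}(X)$ mapping $(\mb{x}',\mb{y}')$ to $(\mb{x}\vee\mb{x}',\mb{y}\vee\mb{y}')$ yields the desired zig-zag.

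For (2), I would argue dually: the coordinatewise minimum is order equivalent to both given pairs by the analogous case analysis, and the interpolating piecewise linear reparametrizations now satisfy $h(u_i)\le u_i$ at every node, forcing $h\le\mathrm{id}$ on each segment, so the resulting d-homeomorphisms lie in $\Sigma^-_{\mc{I}}(X)$. The only genuinely delicate point is the verification that the piecewise linear interpolation stays on the correct side of the diagonal, but this reduces to the trivial observation that an affine function on $[a,b]$ takes values between $f(a)$ and $f(b)$, so its sign is controlled by its values at the endpoints.
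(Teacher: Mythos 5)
Your case analysis for the order-equivalence part is correct, and the piecewise-linear interpolation through the nodes $(u_i,w_i)$ is the intended device (it is the same construction as in the proof of Lemma \ref{lem:order}, refined to control the side of the diagonal). However, there is a gap exactly where you conclude that the resulting map lies in $\Sigma^{+}_{\mc{I}}(X)$ (resp.\ $\Sigma^{-}_{\mc{I}}(X)$): these subcategories are generated by interval induced d-\emph{homeomorphisms} -- the paper even leaves open, right after Lemma \ref{lem:intindiness}, whether general interval induced d-maps are inessential -- so you must check that your interpolation $h$ is strictly increasing, and you never do. Between interior nodes strict monotonicity does follow from the order equivalence you just established ($u_i<u_j$ forces $w_i<w_j$), but at the boundary it can genuinely fail under the definition of order equivalence as literally stated, which only compares coordinates with one another: for instance $\mb{y}=(0.5)$ and $\mb{y}'=(1)$ can occur in order-equivalent pairs, the least upper bound then has a coordinate equal to $1$, and no homeomorphism of $I$ (which must fix $0$ and $1$) can send $0.5$ to $1$; your node prescription would give $h(0.5)=1=h(1)$. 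Dually, the greatest lower bound can hit $0$ and ruin injectivity at the bottom node.

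The fix is to read order equivalence as including the comparisons with the constants $0$ and $1$ -- evidently the intended convention, since the ordered partitions used after Proposition \ref{prp:ordereq} include the outer inequalities $0\le x_{i_1}$ and $x_{i_k}\le 1$ -- and then a one-line addition completes your argument: if $u_i<1$ then also $u_i'<1$, hence $w_i=\max(u_i,u_i')<1$, and if $u_i>0$ then $w_i>0$ (dually for $\wedge$), so the interpolation is strictly increasing and $h_X$, $k_X$ are honest interval induced d-homeomorphisms of the stated type. Without that reading, the legs of your zig-zag need not be morphisms of $\Sigma^{\pm}_{\mc{I}}(X)$ at all. In short, the genuinely delicate point is not the sign of $h-\mathrm{id}$ on each segment (your endpoint argument for that is fine) but the injectivity of $h$ at the top and bottom nodes, which your sketch does not address.
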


\begin{proposition}\label{prp:ordereq}
Let $X$ denote a pre-cubical set with an $n$-cell $c$ and an $m$-cell $d$. Let $\mb{x},\mb{x}'\in I^n$ and $\mb{y},\mb{y}'\in I^m$.  
\begin{enumerate}
\item If the pairs $(\mb{x},\mb{y})$ and $(\mb{x}',\mb{y}')$ are order equivalent, then the pairs $([c,\mb{x}], [d,\mb{y})])$ and  $([c,\mb{x}'], [d,\mb{y}']))$ are situated in the same component in $\vec{\pi}_0(X;\alpha, \mc{I})_*^*$ for all $\alpha$ and hence also in $\vec{\pi}_0(X;\alpha, \mc{F})_*^*$, for all $\alpha$ and $\mc{F}$. In particular, $\vec{T}(X)_{[c,\mb{x}]}^{[d,\mb{y}]}$ and $\vec{T}(X)_{[c,\mb{x}']}^{[d,\mb{y}']}$ are homeomorphic and thus homotopy equivalent.
\item If the pairs $([c,\mb{x}],[d,\mb{y}])$ and $([c,\mb{x}'],[d,\mb{y}'])$ are contained in the same component object in the component category $\vec{\pi}_0(X;\alpha, \mc{I})_*^*$, then $(\mb{x},\mb{y})$ and $(\mb{x}',\mb{y}')$  are order equivalent. 
\item The order pair categories $\vec{\pi}_0(X;\alpha, \mc{I})_*^*,\; \alpha\in\{ +,-,\pm ,0\}$, agree and will be denoted just by $\vec{\pi}_0(X; \mc{I})_*^*$.
\end{enumerate}
\end{proposition}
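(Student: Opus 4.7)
The plan is to address the three claims in order, with the bulk of the work being in (1) and most of the novelty contained in the zig-zag construction of Lemma \ref{lem:order2}.

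For (1), I would start from Lemma \ref{lem:order}(2), which already yields an interval-induced d-homeomorphism $h_X$ mapping $[c,\mb{x}]$ to $[c,\mb{x}']$ and $[d,\mb{y}]$ to $[d,\mb{y}']$. The subtle point is producing such a morphism in the correct flavor $\alpha$. Here Lemma \ref{lem:order2} is exactly what is needed: passing through the pair $(\mb{x}\vee\mb{x}',\mb{y}\vee\mb{y}')$ (which lies in the same cubes and is itself order equivalent by Lemma \ref{lem:order2}) gives a $\Sigma^+_{\mc{I}}$-zig-zag, while passing through $(\mb{x}\wedge\mb{x}',\mb{y}\wedge\mb{y}')$ gives a $\Sigma^-_{\mc{I}}$-zig-zag; applying both in succession supplies what is needed for $\alpha=\pm$, and for $\alpha=0$ the direct homeomorphism suffices. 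In each case the induced $h_X$ is a d-homeomorphism of $X$, and its action on trace spaces is a homeomorphism with inverse $(h^{-1})_X$, whence the final homeomorphism claim.

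For (2), the key observation is that every generating morphism in $\Sigma^{\alpha}_{\mc{I}}(X)$ is of the form $h_X$ for an interval \emph{homeomorphism} $h\colon\vec{I}\to\vec{I}$. Since such an $h$ is strictly increasing, applying it coordinatewise preserves every one of the strict-inequality and equality relations among the $(n+m)$ coordinates of the concatenated vector $[\mb{x},\mb{y}]$. Thus each arrow in a zig-zag of $\Sigma^{\alpha}_{\mc{I}}$-morphisms sends the pair $([c,\mb{x}],[d,\mb{y}])$ to another pair whose concatenated coordinate vector lies in the same order equivalence class, and an induction on the length of the zig-zag finishes the argument.

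For (3), parts (1) and (2) taken together identify the objects of $\vec{\pi}_0(X;\alpha,\mc{I})_*^*$ with the order equivalence classes of pairs of cubical coordinates, independently of $\alpha$. What remains is to show the morphism sets coincide; every morphism is represented by an extension from $E\vec{\pi}_1(X)$, and the only $\alpha$-dependence comes from the relations imposed by the $\Sigma^{\alpha}_{\mc{I}}(X)$-action. The lub/glb decomposition of Lemma \ref{lem:order2} realizes every $\Sigma^{0}_{\mc{I}}$-identification as both a $\Sigma^{+}_{\mc{I}}$- and a $\Sigma^{-}_{\mc{I}}$-zig-zag of identifications, so the equivalence relations on extension morphisms agree across all four flavors. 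The main obstacle I anticipate is precisely this last step: one must verify that two representative extensions identified after formally inverting a $\Sigma^{0}_{\mc{I}}$-morphism are also identified by the corresponding lub-/glb-zig-zag in the one-sided flavors, which requires spelling out carefully how the relations (\ref{eq:dE}) and (\ref{eq:flow}) for $f=id_X$ propagate through a composite zig-zag. Once this bookkeeping is done, the natural comparison functors between the four categories become identities on both objects and morphisms.
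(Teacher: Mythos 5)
Your argument is correct and takes essentially the same route as the paper, whose proof of Proposition \ref{prp:ordereq} simply cites Lemma \ref{lem:order} and Lemma \ref{lem:order2}; your treatment of (1)--(3) just spells out the details those two lemmas are meant to supply (direct homeomorphism for $\alpha=0$, lub/glb zig-zags for $\alpha=+,-$, strict monotonicity for (2), and the resulting flavour-independence for (3)). One small streamlining: for $\alpha=\pm$ you do not need to apply both zig-zags in succession -- the lub zig-zag alone suffices, because each future-type interval-induced homeomorphism has a past-type inverse whose composite is the identity on trace spaces, so it already lies in $\Sigma^{\pm}_{\mc{I}}(X)$.
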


\begin{proof}
This follows immediately from Lemma \ref{lem:order} and Lemma \ref{lem:order2}.
\end{proof}

\subsection{Order subdivisions and the order pair component category}
Proposition \ref{prp:ordereq} allows to give fairly explicit descriptions of the order pair category $\vec{\pi}_0(X; \mc{I})_*^*$ of a pre-cubical set $X$: A cube $I^k$ subdivides into $k$-simplices given by inequalities $0\le x_{i_1}\le x_{i_2}\le\dots\le x_{i_k}\le 1$; there are $k!$ such simplices. All possible order relations between coordinates arise by replacing $\le$ by either $=$ or by $<$; each replacement (ie each \emph{ordered partition}) gives rise to interiors 
$\mathring{\Delta}$ of subsimplices $\Delta$ of this subdivision. If $k=n+m$, inclusions $[1:n]\hookrightarrow [1:n+m]$, resp.\ $[1:m]\hookrightarrow [1:n+m]$ induce simplicial projections $I^{n+m}\to I^n$ resp.\ $I^{n+m}\to I^m$ (by restriction of a partition to $n$, resp.\ $m$ coordinates). 

To get hold on an \emph{object} in the order pair category $\vec{\pi}_0(X; \mc{I})_*^*$ of $X$, you need to fix a pair of cubes $c\in X_n, d\in X_m$ and the interior of a subsimplex $\Delta_c^d$ of the simplicial subdivision of $|c|\times |d|\cong I^{n+m}$ just described. 
Furthermore, you need to make sure that there exist $(\mb{s},\mb{t})\in\mathring{\Delta}_c^d$ such that $\vec{P}(X)_{[c,\mb{s}]}^{[d,\mb{t}]}\neq\emptyset$. By Proposition \ref{prp:ordereq}, this condition is independent of the choice of base points $\mb{s}, \mb{t}$ in $\mathring{\Delta}_c^d$ ; for example, one may choose the barycenters of each subsimplex $\Delta_c^d$. But beware: These do, in general, not agree with the pair of barycentres of its projections within  $I^n$, resp.\ $I^m$.

\emph{Future (extension) morphisms} from $\Delta_c^d$ to $\Delta_c^{d'}$ are determined by $\vec{\pi}_1(X)_{[d,\mb{t}]}^{[d',\mb{t}']}$ for $(\mb{t},\mb{t}')\in\mathring{\Delta}_d^{d'}$ and $\vec{P}(X)_{[d,\mb{t}]}^{[d',\mb{t}']}\neq\emptyset$; well-determined up to natural homeomorphisms by Lemma \ref{lem:order2}. Composition of future morphisms arises from composition in the fundamental category between matching representatives; you may have to change base point pairs by an element of $\Sigma_{\mc{I}}(X)$ before representatives match! Similarly for \emph{past} morphisms.  

\begin{remark}\label{rem:unique}
The path space between two points in the same cube is contractible or empty; this is in particular true for path spaces between barycenters of a subdivision simplex and a boundary simplex. Hence, there is \emph{at most one} morphism between neighbouring cells given by an extension morphism contained in these two cells. 
\end{remark}

By construction, the functor $\overrightarrow{\pi}_1(X)_*^*: dE\overrightarrow{\pi_1}(X)\to\mathbf{Ho-Top}$ from Section \ref{sss:extcat} extends to $dE\overrightarrow{\pi}_1(X)[\Sigma_{\mc{I}}(X)^{-1}]$; its restriction to $\Sigma_{\mc{I}}E\overrightarrow{\pi}_1(X)[\Sigma_{\mc{I}}(X)^{-1}]$ (analogous to the construction in Section \ref{sss:isc}) factors over the quotient category $\overrightarrow{\pi}_0(X; \mc{I})_*^*$. The quotient functors $\overrightarrow{\pi}_0(X; \mc{I})_*^*\to\overrightarrow{\pi}_0(X;\alpha,\mc{F})_*^*$, $\alpha =+,-,0,\pm$, from Section \ref{ss:intind} are all surjective on objects and full; but rarely faithful.

For a finite-dimensional pre-cubical set $X$, the order pair component can be ``over-appro\-xi\-ma\-ted'' by a discrete full \emph{sub}category of $E\vec{\pi_1}(X)$: For an $n$-dimensional complex $X$, we choose as objects all pairs $[c,\mb{s}], [d,\mb{t}]$ such that all coordinates $s_i$ of $\mb{s}$ and $t_j$ of $\mb{t}$ are fractions $\frac{l}{2n+1},\; 0\le l\le 2n+1$. This choice ensures that every subsimplex $\Delta_c^d$ contains at least one such pair. The projection functor from the arising subcategory of $E\vec{\pi}_1(X)$ into $\vec{\pi}_0(X; \mc{I})_*^*$ is onto on objects and fully faithful, hence an equivalence of categories. Using Remark \ref{rem:unique} for (2) below, we conclude:

\begin{corollary}
Let $X$ denote a finite pre-cubical set.
\begin{enumerate}
\item $\vec{\pi}_0(X; \mc{I})_*^*$ and thus all pair component categories $\vec{\pi}_0(X; \alpha, \mc{F})_*^*$ have finitely many objects.
\item If $X$ does not admit any non-trivial directed loops, then all the above mentioned categories have finite sets of morphisms.
\end{enumerate}
\end{corollary}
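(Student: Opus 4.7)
For part (1), I would start from the explicit parametrisation of objects of $\vec{\pi}_0(X;\mc{I})_*^*$ recorded in the discussion after Proposition~\ref{prp:ordereq}: an object is a reachable triple $(c,d,\mathring{\Delta}_c^d)$, where $c\in X_n$, $d\in X_m$ are cubes of $X$ and $\mathring{\Delta}_c^d$ is an open subsimplex of the ordinal simplicial subdivision of $|c|\times|d|\cong \vec{I}^{n+m}$. Since $X$ is finite, and hence finite-dimensional with finitely many cubes, and since each $\vec{I}^k$ carries only finitely many such open subsimplices (one per ordered set partition of $\{1,\dots,k\}$), the collection of reachable triples is finite. The paragraph preceding the corollary observes that the quotient functors $\vec{\pi}_0(X;\mc{I})_*^*\to\vec{\pi}_0(X;\alpha,\mc{F})_*^*$ are surjective on objects, so the finiteness of the object set propagates to every flavour $\alpha$ and every choice of $\mc{F}$.

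For part (2), I would invoke the equivalence of categories stated just before the corollary: $\vec{\pi}_0(X;\mc{I})_*^*$ is equivalent to the full subcategory of $E\vec{\pi}_1(X)$ on the finite set of pairs $([c,\mb{s}],[d,\mb{t}])$ whose coordinates are fractions of the form $l/(2\dim X + 1)$. Morphism sets in that subcategory are products $\vec{\pi}_1(X)_{x'}^{x}\times\vec{\pi}_1(X)_{y}^{y'}$, so the question reduces to the claim that every $\vec{\pi}_1(X)_p^q$ is finite when $X$ has no non-trivial directed loops. Granting this, the quotient functors $\vec{\pi}_0(X;\mc{I})_*^*\to\vec{\pi}_0(X;\alpha,\mc{F})_*^*$ are, in addition to being surjective on objects, full, so finiteness of morphism sets transfers to each pair component category as well.

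The substantive step is therefore the finiteness of $\vec{\pi}_1(X)_p^q$ in a finite, loop-free cubical complex. The argument I have in mind is combinatorial. After a suitable non-decreasing reparametrisation, any d-path $\gamma$ from $p$ to $q$ can be written as a concatenation of straight line segments running through a finite sequence of cubes $c_0,c_1,\dots,c_N$ in $X$, where successive cubes share a common face traversed at the transition time. Absence of non-trivial directed loops forces any such cube sequence to have length bounded by the total number of cubes of $X$: a repeated cube, concatenated with the subpath between its two appearances, would yield a non-trivial d-loop. Hence only finitely many cube sequences occur. For a fixed cube sequence the space of admissible straight representatives is a convex subset of a Euclidean cell and therefore contractible, so it collapses to a single d-homotopy class. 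Combining these bounds gives the required finiteness of $\vec{\pi}_1(X)_p^q$, and hence of every morphism set in $\vec{\pi}_0(X;\mc{I})_*^*$ and in $\vec{\pi}_0(X;\alpha,\mc{F})_*^*$.

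The main obstacle is making the PL-straightening step in the last paragraph fully rigorous, namely showing that every d-path is d-homotopic rel endpoints to a cube-by-cube straight representative and that two such representatives with the same cube sequence are d-homotopic. This requires invoking the standard machinery for d-path spaces in cubical complexes (cube chains and their contractibility properties, cf.\ the references cited earlier in the paper), after which everything else is bookkeeping.
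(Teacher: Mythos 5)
Part (1) of your plan is fine and is essentially the paper's own argument: objects of $\vec{\pi}_0(X;\mc{I})_*^*$ are indexed by reachable pairs of cubes together with an open subsimplex of the order subdivision of $|c|\times|d|$, there are finitely many of these for a finite $X$, and the quotient functors to $\vec{\pi}_0(X;\alpha,\mc{F})_*^*$ are surjective on objects.

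For part (2), however, there is a genuine gap, and it sits exactly at the step you rely on to bound cube sequences. You assert that if a cube $c$ appears twice in the carrier sequence of a d-path, then ``concatenating with the subpath between its two appearances'' yields a non-trivial d-loop. That subpath runs from a point $x_1$ with carrier $c$ to a \emph{different} point $x_2$ with carrier $c$; it is not a loop, and to close it into one you would need a d-path from $x_2$ back to $x_1$. Inside the cube this requires $x_2\le x_1$ in the coordinatewise order (and more precisely: the exit point of the first visit must be reachable from the re-entry point of the second visit), which is not automatic -- the re-entry can happen ``strictly to the upper right'' of the exit. So the statement ``no directed loops $\Rightarrow$ no repeated carrier, hence cube sequences of length at most the number of cubes'' is exactly the place where the loop-freeness hypothesis has to be exploited by a real argument, and your one-line justification does not supply it. (The finiteness of $\vec{\pi}_1(X)_p^q$ for a finite loop-free complex is true, but the usual way to get a length bound is via a vertex-to-vertex normal form -- cube chains traverse cubes from bottom to top vertex, and the strictly increasing sequence of vertices in the reachability partial order is bounded by the number of vertices -- not via non-repetition of cubes.) In addition, the straightening step and the claim that all straight representatives with a fixed cube sequence form a single d-homotopy class are, as you yourself note, deferred to unspecified machinery; as written they are not proofs.

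It is also worth noting that the paper's own route to (2) is much lighter and bypasses the fundamental category entirely: using the equivalence of $\vec{\pi}_0(X;\mc{I})_*^*$ with the finite full subcategory of $E\vec{\pi}_1(X)$ on the grid points, the paper invokes Remark \ref{rem:unique} -- path spaces between points of a common cube are empty or contractible, so there is \emph{at most one} extension morphism between neighbouring cells -- and then finiteness of hom-sets follows from having finitely many objects, uniquely determined elementary morphisms between neighbours, and no directed loops (so no cycles can be traversed when composing them). If you want to keep your reduction to $\vec{\pi}_1(X)_p^q$, you must either prove the carrier-repetition lemma correctly or import the cube-chain finiteness results with their hypotheses checked; otherwise the simpler argument via Remark \ref{rem:unique} is the one to use.
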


\subsection{Euclidean cubical complexes}\label{ss:Euclid}
With the lattice of integer vectors $\mb{Z}^n\subset\mb{R}^n$ as vertices, one forms an infinite pre-cubical set whose geometric realization is $\mb{R}^n$. Every subcomplex $X$ of that complex is called a \emph{Euclidean cubical complex}. By definition, such a Euclidean complex admits only trivial directed loops.

\begin{definition}\label{def:H}
\begin{enumerate}
\item Two elements $(\mb{x},\mb{x}')\in X$ will be called \emph{cube-equi\-va\-lent} if there exists a cube $c\subset X$ such that $\mb{x}, \mb{x}'$ are both contained in the \emph{interior} of $c$. 
\item A d-map $h=(h_1,\dots ,h_n): \mb{R}^n\to\mb{R}^n$ with $h_i(x_1,\dots ,x_i,\dots ,x_n)=x_i$ for every $x_i\in\mb{Z}, 1\le i\le n$, is called \emph{cube-preserving}.
\item If $h(\mb{x})\ge \mb{x}$ (resp.\ $h(\mb{x})\le \mb{x})$, $h$ is said of future type, resp.\ of past type.
\item A cube-preserving d-map $h$ restricts to a cellular endo-d-map $h: X\to X$ from every subcomplex $X$ into itself. These cube-preserving maps form, when restricted to $X$, a (contractible!) monoid. Those of future (resp.\ past) type form contractible submonoids $\mc{H}_n^{\alpha}(X)$.
\end{enumerate}
\end{definition}

\begin{proposition}\label{prp:H-psp}
Let $X$ denote a Euclidean cubical complex. Any cube preserving d-map $h:X\to X$ of future type (resp.\ of past type) is $+\mc{F}_{\infty}$ (resp.\ $-\mc{F}_{\infty}$) inessential.
\end{proposition}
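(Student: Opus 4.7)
The plan is to exhibit the straight-line interpolation $H: X\times \vec I\to X$, $H(\mb x,t) = (1-t)\mb x + t\,h(\mb x)$, as a future $\mc F_\infty$-psp homotopy flow from $id_X$ to $h$, and to handle the past-type case by passing to the reverse d-structure.

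That $H$ is a well-defined future d-homotopy is essentially formal. Cellularity of $h$ ensures $h(\mb x)$ lies in the closed cube containing $\mb x$, so by convexity of cubes $H(\mb x,t)$ stays in that cube, and $H$ takes values in $X$. Each slice $H_t$ is a d-map because d-paths in a Euclidean cubical complex are coordinatewise non-decreasing, and this property is preserved by convex combinations: if $\gamma$ is a d-path, then $H_t\circ\gamma = (1-t)\gamma + t(h\circ\gamma)$ is a convex combination of the two d-paths $\gamma$ and $h\circ\gamma$. For fixed $\mb x$, the path $t\mapsto H_t(\mb x) = \mb x + t(h(\mb x)-\mb x)$ is coordinatewise non-decreasing since $h(\mb x)\ge \mb x$ by the future-type hypothesis; hence $H$ is a future d-homotopy with $H_0=id_X$ and $H_1=h$.

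The substantive step is to show that every $H_t$ is $\mc F_\infty$-psp, i.e.\ that $\vec T(H_t)$ induces weak homotopy equivalences on all trace spaces $\vec T(X)_{\mb x_1}^{\mb x_2}$. Note that each $H_t$ is itself cube-preserving and of future type, and its restriction to any cube $c$ is an endo-d-map of the hyperrectangle $c$. By Section \ref{sss:int} combined with Proposition \ref{prop:prod}, every endo-d-map of a hyperrectangle is $\mc F_\infty$-psp, since all non-empty trace spaces inside a hyperrectangle are contractible. The idea is then to splice these cube-local equivalences into a global weak equivalence, using a description of $\vec T(X)_{\mb x_1}^{\mb x_2}$ as a (co)limit built from trace spaces of hyperrectangles indexed by the cube sequences traversed by representative d-paths (cf.\ \cite{Raussen:09,Ziemianski:18}); because $H_t$ fixes every integer coordinate of $\mb x$, it commutes with face inclusions, so that the cube-local equivalences assemble coherently.

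The main obstacle is precisely this gluing step: while cube-by-cube psp is immediate, the trace space $\vec T(X)_{\mb x_1}^{\mb x_2}$ is a quotient of concatenations by reparametrization, so pieces of a d-path living in adjacent cubes must be matched across their shared faces in a way compatible with the chosen cube-wise homotopy data. I would address this via a Moore-style strictly associative concatenation model for traces and induct on the length of a representative cube sequence, using at each step that the whole family $H_s$, $0\le s\le t$, provides a through-going homotopy on sub-paths (and that $\mc H_n^+(X)$ is contractible, so the interpolation is canonical up to contractible choice). The past-type assertion then follows by applying the future-type result to the d-space obtained from $X$ by reversing all d-paths, in which a cube-preserving past-type d-map becomes future-type.
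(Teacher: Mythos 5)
Your homotopy flow is the same one the paper uses (cellwise convex combination from $id_X$ to $h$), and your verification that it is a future d-homotopy with cube-preserving, future-type levels $H_t$ is fine. The gap is in the substantive half: the claim that every $H_t$ is $\mc{F}_\infty$-psp. What you give there is a plan, not an argument. Cube-by-cube psp-ness is indeed trivial (all nonempty trace spaces of a hyperrectangle are contractible), but that says nothing about the global trace spaces $\vec{T}(X)_{\mb{x}_1}^{\mb{x}_2}$ until the local equivalences are actually assembled, and the assembly is exactly what you defer (``I would address this via a Moore-style model and induct on the length of a cube sequence''). Moreover, the coherence claim you lean on is doubtful as stated: a cube-preserving d-map fixes only those coordinates that are integers, so $H_t$ may push interior points onto faces and collapse whole segments of a d-path into lower-dimensional cubes; the image of a d-path under $H_t$ can therefore have a strictly coarser cube sequence than the original, so $H_t$ does not preserve the strata of a decomposition indexed by cube sequences in the naive way. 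The decompositions you cite are also set up for vertices as endpoints, whereas here the endpoints are arbitrary points of the realization. None of this is necessarily fatal, but repairing it is the actual mathematical content of the proposition, and your proposal leaves it open.

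For comparison, the paper's proof avoids any local-to-global gluing. Since each $H_t$ is again cube-preserving of future type, it suffices to show that a single cube-preserving future-type map $h$ is psp. By the homotopy commutativity coming from relation (\ref{eq:flow}) (cf.\ Remark \ref{rem:dext}), $\vec{T}(h)$ sits in a triangle with the two extension maps $*H(\mb{y})$ and $H(\mb{x})*$, whose connecting d-paths $H(\mb{x})$, $H(\mb{y})$ join cube-related points; by two-out-of-three it is then enough to prove that concatenation with a d-path between cube-related points is a homotopy equivalence of path spaces. This is done by exhibiting an explicit homotopy inverse $\alpha\mapsto\alpha\wedge\mb{y}$ (coordinatewise minimum), which remains a d-path in $X$ precisely because the endpoints are cube-related --- a device special to Euclidean complexes, and exactly the ingredient your sketch is missing.
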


\begin{proof}
Cellwise convex combination between $id_X$ and $h$ defines a homotopy flow $H$ such that every d-map $H_t: X\to X$ is cube-preserving, as well. It is a future (resp.\ past) homotopy flow if $h$ is of future (resp.\ past) type. It is therefore enough to show that an $\alpha$ cube-preserving map $h: X\to X$ is $\mc{F}$-psp; we do that for $\alpha =+$: 

According to Remark \ref{rem:dext}.5 (cf also \cite[(5.3)]{Raussen:07}), the diagram
\[\xymatrix{\vec{T}(X)_{\mb{x}}^{\mb{y}}\ar[r]^{\vec{T}(h)}\ar[dr]_{*H(\mb{y})} & \vec{T}(X)_{h(\mb{x})}^{h(\mb{y})}\ar[d]^{H(\mb{x})*}\\
 & \vec{T}(X)_{\mb{x}}^{h(\mb{y})}}\]
 is homotopy commutative for $(\mb{x},\mb{y})\in\vec{X^2}$. Hence it is enough to show that for every $(\mb{x}, \mb{y})\in\vec{X^2}$ and every d-path
 \begin{enumerate}
 \item $\sigma$ from $\mb{y}$ to $\mb{y}'$ between cube-related elements, the map $*\sigma : \vec{P}(X)_{\mb{x}}^{\mb{y}}\to\vec{P}(X)_{\mb{x}}^{\mb{y}'}$
 \item $\tau$ from $\mb{x}'$ to $\mb{x}$ between cube-related elements, the map $\tau *: \vec{P}(X)_{\mb{x}}^{\mb{y}}\to\vec{P}(X)_{\mb{x}'}^{\mb{y}}$
 \end{enumerate} 
are homotopy equivalences. (The construction makes uses of paths instead of traces, but the naturalization map between traces and d-paths in Euclidean complexes from \cite{Raussen:09}
is a homotopy equivalence.) We prove 1.\ above; the proof of 2.\ is similar.

A map $M^{\mb{y}}: \vec{P}(X)_{\mb{x}}^{\mb{y}'}\to\vec{P}(X)_{\mb{x}}^{\mb{y}}$ is constructed by assigning to $\alpha\in\vec{P}(X)_{\mb{x}}^{\mb{y}'}$ the d-path $\alpha^{\mb{y}}\in\vec{P}(X)_{\mb{x}}^{\mb{y}},\;  \alpha^{\mb{y}}(t)=\alpha (t)\wedge\mb{y}$, ie $\alpha_i^{\mb{y}}(t)=\min \{\alpha_i(t), y_i\}$. Since $\mb{y}$ and $\mb{y}'$ are cube-related, $\alpha (t)$ and $\alpha^{\mb{y}}(t)$ are, for every $t\in I$, contained in the same subcube of $\mb{R}^n$, and hence $\alpha^{\mb{y}}$ is a d-path in $X$.

The composition $\xymatrix{\vec{P}(X)_{\mb{x}}^{\mb{y}}\ar[r]^{*\sigma} & \vec{P}(X)_{\mb{x}}^{\mb{y}'}\ar[r]^{M^{\mb{y}}} & \vec{P}(X)_{\mb{x}}^{\mb{y}}}$  assigns to $\alpha\in\vec{P}(X)_{\mb{x}}^{\mb{y}}$ the path $\alpha * c_{\mb{y}}$, which is homotopic to the identity map. 

For $s\in I$, let $\sigma^s$ denote the d-path $\sigma^s(t):=\sigma (s+(1-s)t)$ from $\sigma (s)$ to $\sigma (1)=\mb{y}'$. We consider the homotopy
$\xymatrix{\vec{P}(X)_{\mb{x}}^{\mb{y}'}\times I\ar[r]^{M^{\sigma (s)}} & \vec{P}(X)_{\mb{x}}^{\sigma (s)}\ar[r]^{*\sigma^s} & \vec{P}(X)_{\mb{x}}^{\mb{y}'}}$ (with parameter $s$); it deforms the map $*\sigma\circ M^{\mb{y}}$ -- for $s=0$ -- into $*c_{\mb{y}'}$ -- for $s=1$. The latter map sends $\alpha\in\vec{P}(X)_{\mb{x}}^{\mb{y}'}$ into $\alpha*c_{\mb{y}'}$, and it is homotopic to the identity map on $\vec{P}(X)_{\mb{x}}^{\mb{y}'}$.
\end{proof}

Proceeding as in Section \ref{ss:intind}, the monoids from Definition \ref{def:H}
give rise to wide subcategories  $\Sigma^{\alpha}_{\mc{H}_n}(X)\subset d(X)\subset dE\vec{\pi}_1(X), \; \alpha =+,-,0,\pm$. 
We can form the localized categories $\Sigma_{\mc{H}_n}^{\alpha}E\vec{\pi}_1(X)[\Sigma_{\mc{H}_n}^{\alpha}(X)^{-1}]$, their quotient categories $\vec{\pi}_0(X;\alpha ,\mc{H}_n)_*^*$ and, using Pro\-po\-sition \ref{prp:H-psp}, quotient functors into $\vec{\pi}_0(X;\alpha ,\mc{F})_*^*$, at least for $\alpha\neq 0$.

Proposition \ref{prp:ordereq} has the following (far simpler) analogue for Euclidean cubical complexes:

\begin{proposition}\label{prp:cubeq}
Let $X$ denote a Euclidean complex and let  $(\mb{x},\mb{y}),\; (\mb{x}',\mb{y}')\in\vec{X^2}$.
\begin{enumerate}
\item If $(\mb{x},\mb{x}')$ and $(\mb{y},\mb{y}')$ are cube equivalent, then $(\mb{x}, \mb{y})$ and  $(\mb{x}', \mb{y}')$ are situated in the same component in $\vec{\pi}_0(X;\alpha ,\mc{H}_n)_*^*$. In particular, $\vec{T}(X)_{\mb{x}}^{\mb{y}}$ and $\vec{T}(X)_{\mb{x}'}^{\mb{y}'}$ are homotopy equivalent.
\item If the pairs $(\mb{x},\mb{y})$ and $(\mb{x}',\mb{y}')$ are situated in the same component object in the component category $\vec{\pi}_0(X;\alpha ,\mc{H}_n)_*^*$, then $(\mb{x},\mb{x}')$ and $(\mb{y},\mb{y}')$  are cube equivalent. 
\item The pair categories $\vec{\pi}_0(X;\alpha, \mc{H}_n)_*^*,\; \alpha\in\{ +,-,\pm ,0\}$, agree and will be denoted just by $\vec{\pi}_0(X,\mc{H}_n)_*^*$. There are quotient functors $\vec{\pi}_0(X,\mc{H}_n)_*^*\to\vec{\pi}_0(X; \alpha,\mc{F}_{\infty})$ for all $\alpha$.
\end{enumerate}
\end{proposition}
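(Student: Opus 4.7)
The plan mirrors the proof of Proposition \ref{prp:ordereq}, with cube-equivalent points replacing order-equivalent tuples and cube-preserving d-maps in $\mc{H}_n^\alpha(X)$ replacing interval homeomorphisms. For (1), assume $\mb{x},\mb{x}'$ lie in the interior of a common cube $c_1\subset X$ and $\mb{y},\mb{y}'$ in the interior of a common cube $c_2$. Because cube interiors in a Euclidean complex are strata defined by fixing some coordinates to integers and restricting the others to open unit intervals, the coordinatewise joins $\mb{x}\vee\mb{x}'$ and $\mb{y}\vee\mb{y}'$ remain in the respective interiors. I would construct a future-type cube-preserving d-map $h^+\in\mc{H}_n^+(X)$ by piecewise-linear cellwise interpolation fixing every lattice point and sending $\mb{x}\mapsto\mb{x}\vee\mb{x}'$, $\mb{y}\mapsto\mb{y}\vee\mb{y}'$; a parallel construction gives $k^+\in\mc{H}_n^+(X)$ with $k^+(\mb{x}')=\mb{x}\vee\mb{x}'$ and $k^+(\mb{y}')=\mb{y}\vee\mb{y}'$. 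Together they yield the zig-zag
\[
(\mb{x},\mb{y})\xrightarrow{h^+}(\mb{x}\vee\mb{x}',\mb{y}\vee\mb{y}')\xleftarrow{k^+}(\mb{x}',\mb{y}')
\]
in $\Sigma_{\mc{H}_n}^+(X)$. Dualizing via coordinatewise meets and past-type maps yields the analogous past zig-zag; consequently the two pairs agree in $\vec{\pi}_0(X;\alpha,\mc{H}_n)_*^*$ for every $\alpha$, and the trace-space equivalence is then Proposition \ref{prp:H-psp}.

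For (2), I would exploit that every $h\in\mc{H}_n^\alpha(X)$ fixes the sublattice $\mb{Z}^n$ pointwise and, being a d-map, sends the closure of each cube into itself; the same holds stagewise for the cellwise convex-combination homotopy flow relating $h$ to the identity. Hence the orbit $t\mapsto H_t(\mb{x})$ stays inside the closure of the unique cube whose interior contains $\mb{x}$. Iterating along a zig-zag of $\mc{H}_n^\alpha$-morphisms and their formal inverses in the localized category keeps the first and second coordinates respectively within a chain of cube-closures sharing a fixed set of integer coordinates. The psp property (Proposition \ref{prp:H-psp}) keeps the trace-space homotopy type constant along the entire zig-zag; combining this with the stratum-detecting character of that invariant forces $\mb{x},\mb{x}'$ (and $\mb{y},\mb{y}'$) to land in one and the same cube interior, i.e., to be cube equivalent.

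Part (3) is then immediate from (1) and (2): the object set, indexed by pairs of cube equivalence classes, does not depend on $\alpha$, and (1) supplies both future and past zig-zags so the four categories coincide as quotients of $E\vec{\pi}_1(X)$. The quotient functors to $\vec{\pi}_0(X;\alpha,\mc{F}_\infty)_*^*$ are induced by the inclusion $\Sigma_{\mc{H}_n}^\alpha(X)\subset\Sigma_{\mc{F}_\infty}^\alpha(X)$ guaranteed by Proposition \ref{prp:H-psp}. The main obstacle I foresee is the sharpening step in (2): cube-preserving d-maps need not be cellwise homeomorphisms, so one must genuinely deploy the psp invariant to rule out zig-zags that temporarily collapse $\mb{x}$ onto a boundary face; closure-invariance alone would only yield cube equivalence up to an increase in dimension of the ambient cube.
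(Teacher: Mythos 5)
Your plan for part (1) breaks down at its key step: you require a \emph{single} cube-preserving future-type map $h^+$ with $h^+(\mb{x})=\mb{x}\vee\mb{x}'$ and \emph{simultaneously} $h^+(\mb{y})=\mb{y}\vee\mb{y}'$. Cube equivalence imposes no compatibility between the coordinates of a source and those of its target, so these two prescriptions can be inconsistent. Concretely, take $X=[0,1]\subset\mb{R}$, $\mb{x}=\mb{y}=0.5$ (a diagonal, hence reachable, pair) and $\mb{x}'=0.3$, $\mb{y}'=0.7$: then $\mb{x}\vee\mb{x}'=0.5\neq 0.7=\mb{y}\vee\mb{y}'$, and no self-map can send the single point $0.5$ to two different images. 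Even when $\mb{x}\neq\mb{y}$, shared non-integer coordinate values ($x_i=y_i$ with $x_i'<x_i<y_i'$, say) rule out any product map $h_1\times\dots\times h_n$, and the existence of a non-product cube-preserving d-map realizing both prescriptions is precisely what would have to be proved. This is why the join/meet device of Lemma \ref{lem:order2} works for \emph{order} equivalence, where source and target coordinates are compared jointly by hypothesis, but not for cube equivalence. The paper avoids simultaneous prescriptions altogether: it chooses an auxiliary point $\mb{x}''$ strictly below \emph{both} sources in their common cell and $\mb{y}''$ strictly above both targets, and moves \emph{one endpoint at a time while fixing the other} via the product, piecewise-linear maps of Lemma \ref{lem:cr}, giving the four-step chain $(\mb{x},\mb{y})\sim(\mb{x}'',\mb{y})\sim(\mb{x}'',\mb{y}'')\sim(\mb{x}'',\mb{y}')\sim(\mb{x}',\mb{y}')$. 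This choice also makes reachability of every intermediate pair evident, whereas reachability of your intermediate pair $(\mb{x}\vee\mb{x}',\mb{y}\vee\mb{y}')$ was asserted but not established.

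For part (2) you rely on the psp property as a ``stratum-detecting'' invariant to rule out collapses onto boundary faces, and you flag this yourself as the weak point; it is indeed a genuine gap, and psp cannot close it. By Proposition \ref{prp:H-psp} \emph{every} map in $\mc{H}_n^{\alpha}(X)$ is psp (in a solid box all trace spaces are contractible, so the invariant distinguishes nothing), hence the psp condition adds no constraint beyond cube preservation itself. The paper's argument for (2) does not use psp at all: it invokes only that $\mc{H}_n$-maps preserve (interiors of) cubes, i.e.\ exactly the sharpening of your closed-cell invariance that you left open. Part (3) and the quotient functors via Proposition \ref{prp:H-psp} are fine once (1) and (2) are in place, and there your description agrees with the paper.
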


For the proof of Proposition \ref{prp:cubeq}, we need the following lemma. We will write $(\mb{x},\mb{y})\sim_{\mc{H}}(\mb{x}',\mb{y}')$ if they are contained in the same component; here for $\alpha =+$. We shall write $\mb{x}<<\mb{x}'$ if $x_i<x_i'$ for all $i$.  

\begin{lemma}\label{lem:cr}
\begin{enumerate}
\item Let $\mb{x}, \mb{x}''$ be cube-related and $\mb{x}''<<\mb{x}$. Let $(\mb{x}, \mb{y})\in\vec{X^2}$.\\ Then $(\mb{x}'',\mb{y})\sim_{\mc{H}}(\mb{x},\mb{y})$.
\item Let $\mb{y}, \mb{y}''$ be cube-related, $(\mb{x}'', \mb{y})\in\vec{X^2}, \mb{y}''<<\mb{y}$. Then $(\mb{x}'',\mb{y})\sim_{\mc{H}}(\mb{x}'',\mb{y}'')$.
\end{enumerate}
\end{lemma}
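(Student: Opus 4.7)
The plan for both parts is to produce a cube-preserving d-map $h\in\mc{H}_n^+(X)$ realizing the desired equivalence $\sim_{\mc{H}}$ as a morphism in $\Sigma^+_{\mc{H}_n}(X)$. I will look for $h$ of product form $h(z_1,\dots,z_n)=(h_1(z_1),\dots,h_n(z_n))$, with each $h_i:\mb{R}\to\mb{R}$ piecewise linear, non-decreasing, fixing every integer, and satisfying $h_i\ge\mathrm{id}$. Any such $h$ is automatically cube-preserving and of future type, and by Definition \ref{def:H}(4) it restricts to an element of $\mc{H}_n^+(X)$ which, by Proposition \ref{prp:H-psp}, lies in $\Sigma^+_{\mc{H}_n}(X)$.

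For part (1) I aim for $h(\mb{x}'')=\mb{x}$ and $h(\mb{y})=\mb{y}$. Cube-relatedness together with $\mb{x}''<<\mb{x}$ means that in each coordinate $i$ either $x''_i=x_i$ is an integer, in which case I take $h_i=\mathrm{id}$, or $x''_i<x_i$ both lie in some open unit interval $(k_i,k_i+1)$. In the second case the inequality $x_i\le y_i$ (extracted from $(\mb{x},\mb{y})\in\vec{X^2}$) lets me interpolate $h_i$ piecewise linearly through the breakpoints $(k_i,k_i),(x''_i,x_i),(y_i,y_i),(k_i+1,k_i+1)$ when $y_i\in[x_i,k_i+1]$ (with the third and second breakpoints collapsing into a flat segment when $y_i=x_i$), and through $(k_i,k_i),(x''_i,x_i),(k_i+1,k_i+1)$ together with $h_i=\mathrm{id}$ outside $[k_i,k_i+1]$ when $y_i>k_i+1$. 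A short slope check on each linear piece confirms monotonicity and $h_i\ge\mathrm{id}$; the resulting $h$ is the sought morphism $(\mb{x}'',\mb{y})\to(\mb{x},\mb{y})$.

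For part (2) the analogous direct attempt $h(\mb{x}'')=\mb{x}''$, $h(\mb{y}'')=\mb{y}$ succeeds whenever $x''_i<y''_i$ strictly in every coordinate $i$ with $y''_i<y_i$: on each such unit interval $(k_i,k_i+1)$ I interpolate through $(k_i,k_i),(x''_i,x''_i),(y''_i,y_i),(k_i+1,k_i+1)$ and set $h_i=\mathrm{id}$ elsewhere. This breaks down precisely when $x''_i=y''_i<y_i$ in some coordinate, since no product map can simultaneously fix $x''_i$ and send $y''_i=x''_i$ to a strictly larger value. To handle this coincidence I will perform a three-step zig-zag through an auxiliary point $\tilde{\mb{x}}$ chosen cube-related to $\mb{x}''$ with $\tilde x_i<x''_i$ in every coordinate where $x''_i$ lies in the interior of its cube and $\tilde x_i=x''_i$ otherwise. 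Combining $\mb{x}''\le\mb{y}''$ (implicit in $(\mb{x}'',\mb{y}'')\in\vec{X^2}$, which the conclusion requires) with a case split on the thick/thin status of each coordinate shows $\tilde x_i<y''_i$ in every coordinate where $y''_i<y_i$, so the direct construction now applies to the middle step $(\tilde{\mb{x}},\mb{y}'')\to(\tilde{\mb{x}},\mb{y})$. Two applications of part (1) complete the chain
\[(\mb{x}'',\mb{y})\ \sim_{\mc{H}}\ (\tilde{\mb{x}},\mb{y})\ \sim_{\mc{H}}\ (\tilde{\mb{x}},\mb{y}'')\ \sim_{\mc{H}}\ (\mb{x}'',\mb{y}'').\]

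The main obstacle is exactly this coincidence in part (2); everything else reduces to routine piecewise-linear slope bookkeeping. Verifying that the perturbation $\tilde{\mb{x}}$ can always be chosen strictly below $\mb{y}''$ in every coordinate where $y''_i<y_i$ is the sole delicate step in the zig-zag construction, and is settled by a short case analysis on whether each coordinate of $\mb{x}''$ is interior or boundary in its own cube.
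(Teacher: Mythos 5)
Your proof is correct. Part (1) is essentially the paper's own argument: a piecewise-linear, integer-fixing product d-map of future type that sends $\mb{x}''$ to $\mb{x}$ and fixes $\mb{y}$, the whole point being the strict inequality $x''_i<x_i\le y_i$. For part (2) you take a genuinely different route. The paper settles (2) in one sentence, by a single similarly constructed map that fixes $\mb{x}''$ and moves $\mb{y}$ to $\mb{y}''$; with $\mb{y}''<<\mb{y}$ this map is of past type, and it meets no coincidence problem because $x''_i\le y''_i<y_i$, but it is a $\Sigma^-_{\mc{H}_n}$-morphism while $\sim_{\mc{H}}$ was declared for $\alpha=+$ (in the actual application inside Proposition \ref{prp:cubeq} the roles of $\mb{y}$ and $\mb{y}''$ are reversed and the strict bound $x''_i<x_i\le y_i$ is available, so a future-type single map suffices there). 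You instead stay strictly within future-type maps, correctly observe that the naive single-map attempt fails exactly when $x''_i=y''_i<y_i$ --- a case the stated hypotheses do not rule out, only the implicit $\mb{x}''\le\mb{y}''$ being available, which you rightly flag --- and repair it by the zig-zag through the perturbed source $\tilde{\mb{x}}$; your case analysis (an integer coordinate $x''_i$ forces $x''_i<y''_i$ since $y''_i$ is interior) is correct. Two small points: your two appeals to part (1) use it with equality permitted in the integer coordinates, a mild strengthening of the literal ``$<<$'' statement, but your own proof of (1) covers exactly this case, and it is also the form in which the lemma is used in Proposition \ref{prp:cubeq}; and one should record, routinely, that the auxiliary pairs $(\tilde{\mb{x}},\mb{y})$ and $(\tilde{\mb{x}},\mb{y}'')$ again lie in $\vec{X^2}$, via the monotone segment from $\tilde{\mb{x}}$ to $\mb{x}''$ inside the closed cube. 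In sum, your version is more careful than the paper's for (2), purchasing a proof valid verbatim for the future flavour at the cost of a three-step zig-zag, whereas the paper buys brevity by in effect switching the flavour of its single map (or relying on strictness present where the lemma is applied).
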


\begin{proof} of Lemma \ref{lem:cr}:
To show (1), we construct a cube-preserving d-map $h=h_1\times\dots\times h_n:\mb{R}^n\to\mb{R}^n$ (of future type) with $h(\mb{x}'')=\mb{x}$ and $h(\mb{y})=\mb{y}$ as product of piecewise linear d-maps $h_i, 1\le i\le n$, given by
\[ h_i(t)=\begin{cases} t & t\le\ceil{x_i} \mbox { or } t\ge\max\{ y_i, \ceil{x_i}+1\} \\ x_i' & t=x_i'' 
\end{cases};\]
by assumption $x_i''<y_i$. To show (2), a similarly constructed cube-preserving d-map fixes $\mb{x}''$ and sends $\mb{y}$ into $\mb{y}''$. 
\end{proof}

\begin{proof} of Proposition \ref{prp:cubeq}:
\begin{enumerate}
\item Choose $\mb{x}''<<\mb{x},\mb{x}'$ in the interior of the same cell as $\mb{x}$ and $\mb{y}, \mb{y}'<<\mb{y}''$ in the same cell as $\mb{y}$. According to Lemma \ref{lem:cr}, we obtain the following chain of equivalences:\\ $(\mb{x},\mb{y})\sim_{\mc{H}}(\mb{x}'',\mb{y})\sim_{\mc{H}}(\mb{x}'',\mb{y}'')\sim_{\mc{H}}(\mb{x}'',\mb{y}')\sim_{\mc{H}}(\mb{x}',\mb{y}')$.\\
A similar construction works for $\alpha = -$.
\item is obvious, since d-maps in $\mc{H}_n$ preserve (interiors of) cubes.
\item follows from (1) and (2).
\end{enumerate}
\end{proof}

\begin{corollary}
Let $X\subset\mb{R}^n$ denote a Euclidean cubical complex.
\begin{enumerate}
\item Components in $\vec{\pi}_0(X,\mc{H}_n)_*^*$ can be indexed by reachable pairs of cubes $(c, d)$. 
\item The category $\vec{\pi}_0(X,\mc{H}_n)_*^*$ is isomorphic to the full subcategory of $E\vec{\pi}_1(X)$ the objects of which are the \emph{pairs of barycenters} of reachable cubes.
\item Components in each of the categories $\vec{\pi}_0(X;\alpha ,\mc{F})_*^*$ are unions of such pairs. 
\item The categories $\vec{\pi}_0(X,\mc{H}_n)_*^*$ and hence $\vec{\pi}_0(X;\alpha ,\mc{F})_*^*$ have finitely many objects and morphisms.
\end{enumerate}
\end{corollary}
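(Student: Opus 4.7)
The plan is to derive all four parts of the corollary from Proposition~\ref{prp:cubeq}, Lemma~\ref{lem:loc} and Proposition~\ref{prp:H-psp}. For (1), parts (1) and (2) of Proposition~\ref{prp:cubeq} together identify the equivalence relation defining objects of $\vec{\pi}_0(X,\mc{H}_n)_*^*$ on $\vec{X^2}$ with coordinatewise cube-equivalence. Since each point of $X\subseteq\mb{R}^n$ lies in the interior of a unique cube, the equivalence class of a reachable pair $(\mb{x},\mb{y})$ is fully determined by the pair of open cubes $(\mathring{c},\mathring{d})$ it occupies. This yields a canonical bijection between components and reachable pairs of cubes; choosing the barycentre $b_c$ of each cube $c$ picks out a distinguished representative $(b_c,b_d)$ per component.

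For (2), let $B\subseteq E\vec{\pi}_1(X)$ denote the full subcategory on such barycentre pairs, and define $F:B\to\vec{\pi}_0(X,\mc{H}_n)_*^*$ as the composition of the inclusion $B\hookrightarrow\Sigma^\alpha_{\mc{H}_n}E\vec{\pi}_1(X)$ with the quotient functor. By (1), $F$ is bijective on objects. For fullness, any zig-zag representing a morphism in the quotient between barycentre pairs can be reduced to a single extension between those pairs by sliding all formal inverses of $\mc{H}_n$-morphisms to one side using Lemma~\ref{lem:loc}(1), then absorbing the remaining cube-preserving d-map into an extension via~(\ref{eq:dE}) and~(\ref{eq:flow}) (the latter with $f=id_X$). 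For faithfulness, the key input is that any cube-preserving $h\in\mc{H}_n$ sends $b_c$ to a point of $\mathring{c}$, so by Lemma~\ref{lem:loc}(2) the induced bijection between the sets $E\vec{\pi}_1(X)((b_c,b_d),(b_{c'},b_{d'}))$ and $E\vec{\pi}_1(X)((h(b_c),h(b_d)),(h(b_{c'}),h(b_{d'})))$ shows that the quotient introduces no additional identifications between morphisms with source and target already in $B$.

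For (3), Proposition~\ref{prp:H-psp} gives the inclusion $\Sigma^\alpha_{\mc{H}_n}(X)\subseteq\Sigma^\alpha_{\mc{F}_\infty}(X)\subseteq\Sigma^\alpha_{\mc{F}}(X)$ for $\alpha\in\{+,-\}$; the convex-combination flow used there is equally a neutral ($\alpha=0$) and, because cube-preserving maps are closed under composition and their convex combinations are again cube-preserving, a two-sided ($\alpha=\pm$) psp-flow, giving the inclusion for all $\alpha$. The induced quotient functor $\vec{\pi}_0(X,\mc{H}_n)_*^*\to\vec{\pi}_0(X;\alpha,\mc{F})_*^*$ is therefore surjective on objects, exhibiting every $\mc{F}$-component as a union of cube-pair components. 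For (4), if $X$ has finitely many cubes, then parts (1) and (3) force both categories to have finitely many objects. Since Euclidean complexes admit no nontrivial directed loops, the set of d-homotopy classes of d-paths between any two fixed points is finite; hence morphism sets of $B$ are finite, and (2) followed by (3) transfers finiteness to $\vec{\pi}_0(X,\mc{H}_n)_*^*$ and then to $\vec{\pi}_0(X;\alpha,\mc{F})_*^*$.

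The main obstacle is the faithfulness step of (2), because a cube-preserving map need not fix barycentres themselves, only their containing cubes. One must therefore verify carefully that the bijections of Lemma~\ref{lem:loc}(2), combined with the flow relation~(\ref{eq:flow}) applied to the convex-combination flow connecting $id_X$ to a cube-preserving map (which keeps each point inside its cube), collapse every quotient-level identification between two representative extensions at a fixed barycentre pair back to an equality already visible in $B\subseteq E\vec{\pi}_1(X)$.
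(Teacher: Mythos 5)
Your outline is essentially the argument the paper intends: the corollary is stated without proof as an immediate consequence of Proposition~\ref{prp:cubeq} (components $=$ cube-equivalence classes, hence reachable pairs of cells, with barycentres as canonical representatives), Proposition~\ref{prp:H-psp} (giving the inclusion of inverted subcategories and hence the quotient functors to $\vec{\pi}_0(X;\alpha,\mc{F})_*^*$), and the same barycentre-subcategory device the paper already used for the order pair component category $\vec{\pi}_0(X;\mc{I})_*^*$. Two sub-steps of your write-up are, however, not yet proofs. First, in part (3) your justification of the case $\alpha=\pm$ is not the relevant condition: membership in $\Sigma^{\pm}_{\mc{F}}(X)$ is not a closure property of cube-preserving maps but requires, for a future-type $f$ moving $(\mb{x},\mb{y})$ to $(\mb{x}',\mb{y}')$, a past-inessential partner $g$ with $\vec{T}(g\circ f)$ homotopic to the \emph{identity} of $\vec{T}(X)_{\mb{x}}^{\mb{y}}$. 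This does hold here, but by a different argument: choose a past-type cube-preserving $g$ with $g(\mb{x}')=\mb{x}$, $g(\mb{y}')=\mb{y}$ (as in Lemma~\ref{lem:cr}); then $g\circ f$ fixes $\mb{x}$ and $\mb{y}$, so the cellwise convex combination from $id_X$ to $g\circ f$ fixes both points for every $t$ and therefore induces a genuine homotopy from the identity of $\vec{T}(X)_{\mb{x}}^{\mb{y}}$ to $\vec{T}(g\circ f)$. Without some such endpoint-fixing argument the $\pm$ (and strictly speaking also the neutral) instance of (3) is unsupported; note the paper itself only claims the quotient functors ``at least for $\alpha\neq 0$'' before Proposition~\ref{prp:cubeq}.

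Second, the faithfulness half of (2) is announced rather than proved: your last paragraph states what must be verified but does not verify it. The missing ingredient is exactly the observation recorded in Remark~\ref{rem:unique}: a cube-preserving map and the convex-combination flow keep every point inside its \emph{closed} cube, and trace spaces between two comparable points of a single closed cube are empty or contractible; hence the comparison paths $H(\mb{x})$, $H(\mb{y})$ occurring in (\ref{eq:flow}) are unique up to d-homotopy, the bijections of Lemma~\ref{lem:loc}(2) are canonical, and the composite of these bijections around any zig-zag of $\Sigma_{\mc{H}_n}$-morphisms returning to a barycentre pair is the identity. That is what collapses every identification imposed by (\ref{eq:dE}) and (\ref{eq:flow}) between extensions based at barycentre pairs back to an equality already present in $E\vec{\pi}_1(X)$, and it is how the paper settles the analogous full-faithfulness claim for $\vec{\pi}_0(X;\mc{I})_*^*$. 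With these two points supplied (and the finiteness hypothesis on the number of cells made explicit, as you do), your proof is complete and coincides with the paper's intended route.
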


\begin{remark}
Contrary to what happens for general cubical complexes, for Euclidean cubical complexes we can, as in the proof of Proposition \ref{prp:cubeq}, construct inessential endo-d-maps \emph{fixing just one of the end points} and  leading to inessential extension morphisms; compare Lemma \ref{lem:homflow}.4. This is the reason why, for these complexes, it may be unecessary to distinguish between effects of inessential d-maps and of inessential \emph{extensions}, as they were used in previous work on component categories \cite{FGHR:04,GH:07}.
\end{remark}

\subsection{Dubut's example revisited}\label{sss:compDubut}
Finally, we analyze component categories in the case of the cubical complex $D$ from Section \ref{ss:Dubut}, which is not a Euclidean complex. Nevertheless, some of the tools from the preceeding sections come in helpful. It turns out that in this case, the $\alpha\mc{F}$-inessential d-maps $f: D\to D$ have a quite specific form that we deduce for $\alpha =+$: For that purpose, we have to consider a more precise decomposition of the space $D$. Using natural homeomorphisms identifying the four 2-cells with $I^2$, we define:

\begin{enumerate}
\item $a_1=\{ [A;(1,t)]|\; t<1\} =\{ [B_1; (0,t)]|\; t<1\}$;
\item $a_2=\{ [A;(t,1)]|\; t<1\} =\{ [B_2; (t,0)]|\; t<1\}$;
$a^+=[A; (1,1)]$;
\item $\llcorner\! A=A\setminus (a_1\cup a_2\cup a^+)$;
\item $b_1=\{ [B_1;(t,1)]|\; t>0\}$; 
$b_2=\{ [B_2;(1,t)]|\; t>0\}$;
\item $\lrcorner\! B_1=B_1\setminus (a_1\cup b_1\cup a^+)$;
$\ulcorner\! B_2= B_2\setminus (a_2\cup b_2\cup a^+)$;
\item $c_1=\{ [C;(1,t)]|\; t<1\}$; 
$c_2=\{ [C;(t,1)]|\; t<1\}$; 
$c^+=[C; (1,1)]$;    
\item $\llcorner\! C= C\setminus (c_1\cup c_2\cup c^+)$.         
\end{enumerate}

\begin{center}\begin{figure}[h]\label{fig:Dub3}
\begin{tikzpicture}
  \draw (0,0) -- (4,0) -- (4,2) -- (2,2) -- (2,4) -- (0,4) -- (0,0);
\draw (2.5,2.5) -- (4.5,2.5) -- (4.5,4.5) -- (2.5,4.5) -- (2.5,2.5);
\draw (0,2) -- (2,2);
\draw (2,0) -- (2,2);
\node at (1,1) {$A$};
\node at (1,3) {$B_2$};
\node at (3,1) {$B_1$};
\node at (3.5,3.5) {C};
\node at (1,2) {$a_2$};
\node at (2,1) {$a_1$};
\node at (2.2,2.1) {$a^+$};
\node at (3,2) {$b_1$};
\node at (2,3) {$b_2$};
\node at (3.5,4.5) {$c_2$};
\node at (4.5,3.5) {$c_1$};
\node at (4.7,4.6) {$c^+$};
\draw[line width=0.6mm, color=red] (4,0) -- (4,2);
\draw[line width=0.6mm, color=blue] (0,4) -- (2,4);
\draw[line width=0.6mm, color=red] (2.5,2.5) -- (2.5,4.5);
\draw[line width=0.6mm, color=blue] (2.5,2.5) -- (4.5,2.5);
\end{tikzpicture}
\caption{Decomposition of the cubical complex $D$}
\end{figure}
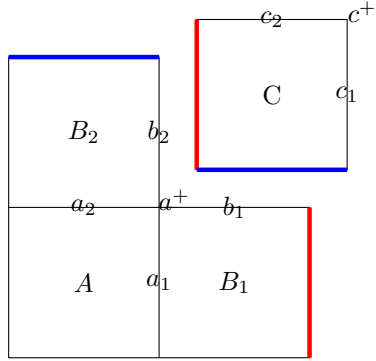\end{center}

\begin{proposition}\label{prp:Dubut}
A $+\mc{F}$-inessential d-map $f: D\to D$ has the following properties: 
\begin{enumerate}
\item $f$ preserves $A, a_i, \; i=1,2, a^+, D\setminus A, C, \lrcorner\! B_1\cup b_1\cup C, \ulcorner\! B_2\cup b_2\cup C$.
\item The restrictions of $f$ to $A$ and to $C$ agree, ie\\ $f([A; (x_1,x_2)])=f([C; (x_1,x_2)]),\; x_1,x_2\in I$.
\item On $A$, resp.\ on $C$, $f=f|_A=f|_C$ is a product map $f=f_1\times f_2$ for suitable d-maps $f_i: \vec{I}\to\vec{I}$ that satisfy $t\le f_i (t), t\in I$.
\item These d-maps $f_i$ are d-homeomorphisms $f_i: [0,1]\to [d_i,1]$ for some $0\le d_i<1$. 
\item There are d-maps $g_i: I^2\to [0, 1+d_i],\; i=1,2$, with $g_1(0,t)=0=g_2(t,0), g_1(1,t)=d_1, g_2(t,1)=d_2$ and $(s,t)\le g_i(s,t),\; i=1,2,$ such that $f([B_1;(x_1,x_2)])=$\\ $[B_1\cup C;(g_1(x_1,x_2),f_2(x_2))]$ and $f([B_2;(x_1,x_2)])=[B_2\cup C;(f_1(x_1),g_2(x_1,x_2))]$ with $f_1, f_2$ as in \emph{(3)}. 
\item $f$ preserves $\llcorner\! A, c^+, b_1\cup c_2, b_2\cup c_1, \lrcorner\! B_1\cup\llcorner\! C, \ulcorner\! B_2\cup\llcorner\! C$.
\end{enumerate}
\end{proposition}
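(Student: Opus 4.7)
The approach uses two consequences of $f$ being $+\mc{F}$-inessential with accompanying psp homotopy flow $H\colon D\times\vec{I}\to D$ from $id_D$ to $f$: reachability, i.e.\ $f(x)$ lies in the future of $x$ along the d-path $t\mapsto H_t(x)$; and, by Lemma~\ref{lem:noniso} applied both to the pair $(x,y)\in\vec{D^2}$ and to the intermediate pairs $(H_tx,H_ty)$, the induced map $\vec{T}(f)$ is an $\mc{F}$-equivalence that, by continuity of $H$, respects the decomposition of each trace space into path components. I would first record the homotopy classification of $\vec{T}(D)_p^q$ already described in Section~\ref{ss:Dubut}: it is empty, contractible, or homotopy equivalent to $S^0$, the $S^0$-case occurring exactly when $p$ lies in the closed cell $A$ and $q$ lies in the closed cell $C$ with $p\le q$ coordinate-wise.

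Claim (1) then unfolds mechanically. Since $c^+$ is the unique maximum of the d-order on $D$, reachability forces $f(c^+)=c^+$. For every $p\in A$ the pair $(p,c^+)$ has $S^0$-trace-space, so the same holds for $(f(p),c^+)$, which by the classification forces $f(p)\in A$; hence $f(A)\subseteq A$. The symmetric argument with pairs $([A;(0,0)],q)$ for $q\in C$ gives $f(C)\subseteq C$, and then $f(a^+)=a^+$ because $a^+$ is the maximum of $A$. Invariance of the edges $a_i$ follows from the fact that $f|_A\colon I^2\to I^2$ is a d-map satisfying $f|_A(p)\ge p$, which maps each extremal edge $\{x_i=1\}$ to itself. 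Invariance of $D\setminus A$ and of the two ``sides'' $\lrcorner B_1\cup b_1\cup C$, $\ulcorner B_2\cup b_2\cup C$ is immediate from reachability in the planar complex $D$: once a d-path is on the $B_1$-side past $a^+$, it cannot re-enter the interior of $A$ or cross over to the $B_2$-side.

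The rigid assertions (2)--(5) require the $\mc{F}$-equivalence and not merely preservation of component counts. For a pair $(p,q)=([A;x],[C;y])$ with $S^0$-trace-space the two components correspond to the ``via $B_1$'' and ``via $B_2$'' routes; since $t\mapsto\vec{T}(H_t)$ deforms $id$ into $\vec{T}(f)$ continuously while inducing bijections on $\pi_0$, $\vec{T}(f)$ preserves this labelling. Consequently $f(B_1)\subseteq B_1\cup C$ and $f(B_2)\subseteq B_2\cup C$, and continuity along the shared edges $a_1,a_2$ and along the red/blue identification edges couples $f|_A,f|_{B_1},f|_{B_2},f|_C$ tightly. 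The restriction of $f$ to $a_1$ is a d-map of the form $(1,t)\mapsto(1,f_2(t))$ which must simultaneously coincide with the right-edge restriction of $f|_A$ and the left-edge restriction of $f|_{B_1}$; the analogous analysis on $a_2$ produces $f_1$. Propagating these identities through the red and blue edges into $C$ yields the product decomposition $f|_A=f|_C=f_1\times f_2$, which is precisely (2) and (3). The inequality $f_i(t)\ge t$ is reachability in a single coordinate, $f_i(1)=1$ is $f(c^+)=c^+$, and $f_i(0)=:d_i$ defines $d_i\in[0,1)$; strict monotonicity, and hence the d-homeomorphism property of (4), is extracted by showing that a plateau of $f_i$ would collapse a one-parameter family of paths in one of the two route-components and thereby destroy the $\mc{F}$-equivalence on an associated $S^0$-trace-space. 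Claim (5) then records $f|_{B_i}$: the coordinate tangent to $a_i$ is dictated by $f_i$ via the edge-matching, and the transverse coordinate $g_i$ is free in the interior subject only to the boundary values collected from the previous steps and to the reachability inequality.

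Claim (6) is pure bookkeeping: each set listed is an intersection or difference of sets already shown invariant in (1), so Lemma~\ref{lem:invariant}(3) applies. The main obstacle is the coupling step in the third paragraph: passing from the soft fact that $\vec{T}(f)$ respects the two-component splitting to the rigid product decomposition $f|_A=f|_C=f_1\times f_2$ requires a careful joint continuity argument across all four gluings of $D$, together with the observation that any plateau in $f_i$ would create, at some image pair, a trace space with strictly fewer components than its source --- contradicting the $\mc{F}$-equivalence guaranteed by psp.
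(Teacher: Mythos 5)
Your treatment of part (1) is sound and is a legitimate alternative to the paper's route: where the paper invokes the branch-point result (Corollary~\ref{cor:downarrow}) to fix $a^+$ and preserve $A=\,\downarrow\! a^+$ and its complement, you get $f(A)\subseteq A$, $f(C)\subseteq C$, $f(c^+)=c^+$ from the classification of two-component trace spaces plus reachability, and the future-closedness of $\lrcorner\! B_1\cup b_1\cup C$ and $\ulcorner\! B_2\cup b_2\cup C$ handles the rest. Your plateau argument for (4) also matches the paper's (a pair of type $Q_1$, one component, would be sent to a pair with equal first coordinates, two components — note your closing sentence has the inequality backwards: the image pair acquires \emph{more} components, not fewer).

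The genuine gap is at (2), (3) and (5), which are the heart of the proposition. ``Continuity along the shared edges $a_1,a_2$ and along the red/blue identification edges'' and ``propagating these identities into $C$'' only constrain $f$ on the $1$-skeleton; they cannot force the interior restriction $f|_A$ to be the product of its edge maps, nor force $f|_A=f|_C$ — there are many d-maps with the prescribed edge behaviour that are neither, so no amount of gluing bookkeeping yields the rigidity. What pins it down is the psp property applied to the whole two-parameter family of $(A,C)$-pairs: for a fixed target $[C;x]$, preservation of the component count (two\,/\,one via $B_1$\,/\,one via $B_2$) forces $f|_A$ to map each region $Q_0(x),Q_1(x),Q_2(x)$ into the corresponding region determined by the $C$-coordinates of $f([C;x])$, and continuity then sends the corner $\{x\}=\bar Q_1(x)\cap\bar Q_2(x)$ to the corresponding corner, i.e.\ $f|_A(x)=f|_C(x)$, which is (2); the product structure (3) then follows by applying $f$ to the horizontal d-path from $[A;(0,x_2)]$ to $[C;(0,x_2)]$ through $B_1$ (and the vertical one through $B_2$): by (2) the image path has equal start and end height, and the height function is non-decreasing along any d-path in $A\cup B_1\cup C$, so the second coordinate of $f|_A$ cannot depend on the first; the same squeeze $f_2(x_2)\le$ (second coordinate of $f([B_1;(x_1,x_2)]))\le f_2(x_2)$ gives (5). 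You yourself defer exactly this step as ``the main obstacle'' requiring ``a careful joint continuity argument'' — that is the missing content, and it is a psp argument, not a continuity-across-gluings argument. Finally, (6) is not bookkeeping from (1): differences of invariant sets need not be invariant, and sets such as $\llcorner\! A$, $b_1\cup c_2$, $\lrcorner\! B_1\cup\llcorner\! C$ are not intersections of the sets listed in (1); their invariance uses the homeomorphism property (4) (injectivity of the $f_i$ is what prevents interior points from landing on the top edges or on $c^+$), which is how the paper argues.
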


\begin{proof}
\begin{enumerate}
\item  The vertex $a^+$ is a future branch point. By Corollary \ref{cor:downarrow}.1, both that point, its past $\downarrow\! a^+=A$ and the complement of $A$ are preserved. Moreover, $f$ preserves $\uparrow\! a_i\cap \downarrow\! a^+=a_i$ and $C=\uparrow\! C$. Also the subsets $\lrcorner\! B_1\cup b_1\cup C$ and $\ulcorner\! B_2\cup b_2\cup C$ are equal to their own future.
\item For a chosen point $[C;(y_1,y_2)]$, we consider the set of all points $[A;(x_1,x_2)]$ in its past\\ $\downarrow\! [C;(y_1,y_2)]$ and decompose it according to whether $(x_1,x_2)$ belongs to
\begin{description}
\item[$Q_0(y_1,y_2)$]=$[0,y_1]\times [0,y_2]$. For $(x_1,x_2)\in Q_0(y_1,y_2)$, trace space $\vec{T}(D)_{[A;(x_1,x_2)]}^{[C;(y_1,y_2)]}$ has two path components. 
\item[$Q_1(y_1,y_2)$]=$]y_1,1]\times [0,y_2]$. For $(x_1,x_2)\in Q_1(y_1,y_2)$, trace space $\vec{T}(D)_{[A;(x_1,x_2)]}^{[C;(y_1,y_2)]}$ is path-connected; all paths in it intersect $B_1\setminus\!\{ a^+\}$, but not $B_2\setminus\!\{ a^+\}$.
\item[$Q_2(y_1,y_2)$]=$[0,y_1]\times ]y_2,1]$. For $(x_1,x_2)\in Q_2(y_1,y_2)$, trace space $\vec{T}(D)_{[A;(x_1,x_2)]}^{[C;(y_1,y_2)]}$ is path-connected; all paths in it intersect $B_2\setminus\!\{ a^+\}$, but not $B_1\setminus\!\{ a^+\}$. 
\end{description}
Let $f([C,(x_1,x_2)])=[C;(y_1,y_2)]$. Since $f$ is a psp d-map, its restriction $f_A$ to $A$ must map $Q_j(x_1,x_2)$ into $Q_j(y_1,y_2),\, j=1,2,3.$  By continuity, it maps the one point set $\{ (x_1,x_2)\} =\bar{Q}_1(x_1,x_2)\cap\bar{Q}_2(x_1,x_2)$ into $\bar{Q}_1(y_1,y_2)\cap\bar{Q}_2(y_1,y_2)=\{ (y_1,y_2)\}.$
\item The horizontal d-path from $[A; (0,x_2)]$ to $[C; (0,x_2)]$ through $B_1$ maps under $f$ to the horizontal d-path from $[A; f(0,x_2)]$ to $[C; f(0,x_2)]$ through $B_1$; in particular, $f_2(t,x_2)=f_2(0,x_2)$ for $t\in I$. Likewise for $f_1$ using vertical d-paths through $B_2$. The existence of a future d-homotopy from $id_D$ to $f$ requires $t\le f_i(t),\; t\in I$.
\item By (1) above, $f_i(1)=1, \; i=1,2$. Let $d_i:=f_i(0)\ge 0$. Assume there exists $0\le x_1<x_1'\le 1$ with $f_1(x_1)=f_1(x_1')$. Then, for every $x_2\in I$, the space $\vec{T}(D)_{[A;(x_1',x_2)]}^{[C;(x_1,x_2)]}$ has one component whereas $\vec{T}(D)_{[A;(f_1(x_1'),f_2(x_2))]}^{[C;(f_1(x_1),f_2(x_2))]}$ has two. Hence $f$ cannot be psp.\\ Similarly for $f_2$.
\item The second component of $f|_{B_1}$ does not depend on $x_1$ and is equal to $f_2$ since\\ $f_2(x_2)=f_2([A; (x_1,x_2)])\le f_2([B_1;(x_1,x_2)])\le f_2([C];(x_1,x_2)])=f_2(x_2),\; x_1\in I$.
\item Since $f_i$ is a homeomorphism, it preserves the upper boundary $1$ and its complement. 
\end{enumerate}
\end{proof}

Like in Section \ref{ss:intind}, but now with a coherent prescribed order on the first and second coordinate, we may compare coordinates of start and end points by the relations $=, <, >$. For example, a \emph{decoration} $(=<)$ indicates that the first coordinates of start and end point agree whereas the second coordinate of the start point is less than the second coordinate of the end point.

\begin{proposition}
The pair component category $\vec{\pi}_0(D; +,\mc{F})$ has -- independently of $\mc{F}$ -- objects of the form: Reachable pairs in
\begin{enumerate}
\item $(\llcorner\! A, \llcorner\! A)$ with decorations $(==), (=<), (<=)$ and $(<<)$;
\item $(\llcorner\! A, a_i),\; i=1,2$; with decorations $=$ and $<$ in one coordinate;
\item $(\llcorner\! A, a^+)$;
\item $(a_i, a_i),\; i=1,2,$ with decorations $<$ and $=$;
\item $(a_i, a^+),\; i=1,2$;
\item $(a^+,a^+)$;
\item $(\llcorner\! A, \llcorner\! C)$ with decorations $(<<), (<=), (=<), (==)$;
\item $(\llcorner\! A, \lrcorner\! B_1\cup\llcorner\! C)$ with decorations $(><), (>=)$; 
\item $(\llcorner\! A, \ulcorner\! B_2\cup \llcorner\! C)$ with decorations $(<>), (=>)$;
\item $(\llcorner\! A, (b_1\cup c_2))$ and $(\llcorner\! A, (b_2\cup c_1))$;
\item $(\llcorner\! A, c^+)$;
\item $(a_i, \lrcorner\! B_i\cup\llcorner\! C), \; i=1,2$, with decorations $=$ and $<$;
\item $(a_i, b_1\cup c_2), (a_i, b_2\cup c_1), (a_i,c^+),\; i=1,2$;
\item $(a^+,  b_1\cup c_2), (a^+, b_2\cup c_1), (a^+,c^+)$;
\item $(\lrcorner\! B_1\cup\ulcorner\! B_2 \cup \llcorner\! C, \lrcorner\! B_1\cup\ulcorner\! B_2 \cup \llcorner\! C)$ -- no decorations;
\item $(\lrcorner\! B_1\cup \llcorner\! C, b_1\cup c_2),\; \lrcorner\! B_2\cup \llcorner\! C, b_2\cup c_1)$;
\item $(\lrcorner\! B_1\cup \llcorner\! C, c^+),\; \lrcorner\! B_2\cup \llcorner\! C, c^+)$;
\item $(b_1\cup c_2, b_1\cup c_2), (b_2\cup c_1, b_2\cup c_1)$;
\item $(b_1\cup c_2, c^+), (b_2\cup c_1, c^+)$;
\item $(c^+,c^+)$
\end{enumerate} 
and the inherited extension morphisms.
\end{proposition}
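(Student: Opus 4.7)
The plan is to establish the proposition by two complementary steps: first, use Proposition \ref{prp:Dubut} to show that the listed pairs in distinct ``classes'' cannot lie in the same future pair component; second, exhibit for each listed class a family of $+\mc{F}$-psp homotopy flows realizing the requisite zig-zags. The invariant sets already pinned down in item (1) (respectively item (6)) of Proposition \ref{prp:Dubut} together with Corollary \ref{cor:downarrow} and Lemma \ref{lem:invariant} immediately force projections of any future component to be contained in one of the cells $\llcorner\! A$, $a_1$, $a_2$, $a^+$, $\llcorner\! C$, $c_1$, $c_2$, $c^+$, $\lrcorner\! B_1\cup\llcorner\! C$, $\ulcorner\! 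B_2\cup\llcorner\! C$, $b_1\cup c_2$, or $b_2\cup c_1$ (because $a^+$, $c^+$, as well as the future branch point $a^+$ and the sub-complexes $A$, $C$, $\lrcorner\! B_1\cup b_1\cup C$, $\ulcorner\! B_2\cup b_2\cup C$, are invariant).

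The next step is to refine the partition inside $(\llcorner\! A,\llcorner\! A)$, $(a_i,a_i)$, $(\llcorner\! A,a_i)$, $(\llcorner\! A,\llcorner\! C)$, etc.\ by ``decorations''. Here I would invoke Proposition \ref{prp:Dubut}(3)--(4): the restriction $f|_A=f|_C=f_1\times f_2$ is a product of d-homeomorphisms $f_i\colon[0,1]\to[d_i,1]$. Such a homeomorphism sends coordinates with $x_i<x_i'$ to $f_i(x_i)<f_i(x_i')$ and fixes the relations $=$ and $>$; hence the order relations between the two first coordinates (and independently between the two second coordinates) of a pair $((x_1,x_2),(x_1',x_2'))\in A\times A$, $A\times C$, or $C\times C$ are invariants of the action of $+\mc{F}$-inessential d-maps. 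The same argument applied via the second formula in Proposition \ref{prp:Dubut}(5) explains the decorations listed for pairs with one component in $a_i$ or $\lrcorner\! B_i\cup\llcorner\! C$. This shows that the listed decoration classes are pairwise disjoint under the equivalence relation generated by $+\mc{F}$-psp homotopy flows.

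For the converse direction I would build the connecting psp homotopy flows explicitly, following the recipe of Section \ref{ss:intind} and Proposition \ref{prp:H-psp}: for pairs in $(\llcorner\! A,\llcorner\! A)$ with a fixed decoration and analogously for $(\llcorner\! A,\llcorner\! C)$, use an interval-induced $+$-type reparametrization $h_X$ with $h\in\vec{C}(\vec{I},\vec{I})$ piecewise linear and carefully chosen in each coordinate so as to send one pair to the other while respecting order equivalence; such flows are $+\mc{F}$-psp by Lemma \ref{lem:intindiness} (applied ``coordinate-by-coordinate''), and a zig-zag through the join of the two pairs, as in Lemma \ref{lem:order2}, concludes. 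For pairs one of whose components lies in $\lrcorner\! B_1\cup\llcorner\! C$, $\ulcorner\! B_2\cup\llcorner\! C$, $b_1\cup c_2$, or $b_2\cup c_1$, the decoration structure degenerates (only one projection coordinate is free) and a direct construction of a homotopy flow of the form allowed by Proposition \ref{prp:Dubut}(5) suffices.

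The main obstacle is the bookkeeping in the $B_i$-cells. Because the second component of $f|_{B_1}$ is $f_2$ (Proposition \ref{prp:Dubut}(5)) while the first component is the genuinely two-variable map $g_1$, pairs whose projection enters $\lrcorner\! B_1\cup\llcorner\! C$ cannot in general be labeled by a pair of one-dimensional decorations; the proposition compensates for this by listing only those decorations that reflect \emph{vertical} ordering and by treating the ``diagonal'' subsets $\lrcorner\! B_1\cup\ulcorner\! B_2\cup\llcorner\! C$ and $b_i\cup c_j$ as single objects. The technical task is therefore to check, using the freedom in choosing the $g_i$ in Proposition \ref{prp:Dubut}(5), that any two reachable pairs lying in a listed product of subsets with matching vertical decoration can indeed be connected by an admissible $(f_1\times f_2,g_1,g_2)$; this amounts to realizing arbitrary prescribed target data on the $A$- and $C$-faces while extending $g_i$ monotonically. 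Once that verification is done, the enumeration of objects follows by listing, cell by cell, all reachable pairs compatible with the invariants and decorations, exactly as in the proposition's statement; morphisms are inherited from $E\vec{\pi}_1(D)$ by construction of the quotient.
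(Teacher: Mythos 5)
Your separation step over-claims, and in a way that conflicts with the very statement you are proving. You assert that the order relations between corresponding coordinates are invariants for pairs in $A\times A$, $A\times C$ \emph{and} $C\times C$, and conclude that the decoration classes are pairwise disjoint. For $C\times C$ this is false: item (15) says that \emph{all} reachable pairs in $(\lrcorner\! B_1\cup\ulcorner\! B_2\cup\llcorner\! C)^2$ form a single component, with no decorations, and similarly no decorations survive in items (10), (11), (13), (14), (16)--(19). The point you are missing is that the component relation is generated by zig-zags of flows: although every inessential $f$ restricted to $C$ is a product of homeomorphisms (so a single flow cannot merge differently decorated pairs that stay in $C\times C$), a pair in $\llcorner\! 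C\times\llcorner\! C$ is also a \emph{forward image} of pairs in $\lrcorner\! B_1\times\lrcorner\! B_1$ (resp.\ $\ulcorner\! B_2\times\ulcorner\! B_2$), where the first (resp.\ second) component of $f$ is the two-variable map $g_1$ (resp.\ $g_2$) of Proposition \ref{prp:Dubut}(5), which need not be injective. This is exactly the mechanism the paper exploits: choosing $g_1$ to collapse in its first variable yields an inessential map sending a pair $([B_1;(b_1,c_2)],[B_1;(b_1',c_2)])$, $b_1\neq b_1'$, to the diagonal pair $([C;(c_1,c_2)],[C;(c_1,c_2)])$, while an injective choice sends the same pair to $([C;(c_1,c_2)],[C;(c_1',c_2)])$; the resulting zig-zag fuses the first-coordinate decorations in $\llcorner\! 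C\times\llcorner\! C$, and the analogous step through $B_2$ fuses the second. Your proposal offers no substitute for this argument, and your second paragraph (decorations invariant on $C\times C$, hence classes disjoint) directly contradicts your closing remark that these regions are single objects; the gap is precisely why they are single objects.

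The connecting flows are also not obtainable as you describe. Interval-induced maps (Section \ref{ss:intind}) require the \emph{same} reparametrization $h$ on every coordinate to be well defined on a general pre-cubical set, and $D$ is not a Euclidean complex, so there is no ``carefully chosen in each coordinate'' version of Lemma \ref{lem:intindiness} available; moreover the genuinely interval-induced (diagonal) flows preserve the order type of all four coordinates jointly, which is strictly finer than the decorations you need to realize, so they cannot even connect all pairs inside, say, $(\llcorner\! A,\llcorner\! A)$ with decoration $(<<)$. The flows have to be assembled along the template of Proposition \ref{prp:Dubut}(2)--(5): pick $0\le d_i<1$ and future-type d-homeomorphisms $f_i\colon[0,1]\to[d_i,1]$ with $t\le f_i(t)$ defining $f$ on $A\cup C$ as $f_1\times f_2$, extend over $B_1,B_2$ by admissible $g_i$, and join to $id_D$ by the cellwise linear future d-homotopy, using zig-zags of such flows where needed. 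This is the paper's construction; your final paragraph defers exactly this verification (``the technical task is therefore to check\ldots''), so the realization half of the argument is also left open.
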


\begin{proof}
Most of the distinctions follow from the fact that $f|A=f|C=f_1\times f_2$ is a product of \emph{homeomorphisms}, hence points with non-equal coordinates cannot be identified with points with equal coordinates (unlike what may happen for a homotopy equivalence). In order to construct a psp-d-map $f:D\to D$ establishing equivalence of pairs of points in one of the equivalence classes, one may choose $0\le d_i<1$ and d-homeomorphisms $f_i:[0,1]\to [d_i,1],\; i=1,2,$ with $t\le f_i(t),\; t\in I$ and use them to define the map $f$ on $A\cup C$ according to Proposition \ref{prp:Dubut}(2) -(4). To extend that map to the remaining cells, choose d-maps $g_i: I^2\to [0,1+d_i]$ as in Proposition \ref{prp:Dubut}(5) and define $f$ on $B_1\cup B_2$ accordingly. A linear future d-homotopy connects $id_D$ with the resulting endo map $f$ on $D$. In general, a zig-zag of such d-homotopies is needed.

It may be a bit surprising that decorations do not turn up in case (15). One may extend a map $f_i: [0,1]\to [d_i,1]$ from Proposition \ref{prp:Dubut}(4) to d-maps $g_i:[-1,1]\to [-1,1]$ (for the first coordinate on $B_1$, resp.\ the second on $B_2$); this map needs only be injective on $[0,1]$. Using such maps, we establish that two pairs $[C;(c_1,c_2), (c_1,c_2)]$ and $[C; (c_1,c_2), (c_1',c_2')]$ are $\mc{F}$-equivalent as follows:
\begin{eqnarray*}
[C;(c_1,c_2), (c_1,c_2)] & \sim [B_1;(b_1,c_2), (b'_1,c_2)] & \sim [C;(c_1,c_2), (c'_1,c_2)]\\ & \sim [B_2;(c_1,b_2), (c'_1,b'_2)] & \sim[C;(c_1,c_2), (c'_1,c'_2)].
\end{eqnarray*}
This implies that all pairs in $\vec{\llcorner\! C^2}$ are equivalent to each other, and hence all pairs in case (15). 
\end{proof}

Remark that inessential $+\mc{F}$ maps do not preserve all cells; hence $\vec{\pi}_0(D; +,\mc{F})$ has fewer component objects than $\vec{\pi}_0(D,\mc{H}_2)_*^*$, cf Section \ref{ss:Euclid}. 

For $\alpha =-$, a similar case-by-case examination exhibits components of type $(C,C)$ with decorations; this time, start points in $A\cup B_i$ can be fused. For $\alpha=\pm$, the subsets $B_i$ cannot be fused with neither $A$ nor $C$.

\section{Conclusion and future work}
\subsection{Summary} Inessential homotopy flows and inessential d-maps yield a coherent framework for 
comparing path spaces with variable end points within a given directed space. Localizing their contribution to categories with pairs of points as objects transforms them into \emph{iso}morphisms; this is justified since the trace space functor lets them correspond to isomorphisms in the homotopy category. The resulting quotient categories were shown to have finitely many (though often a huge number of) objects when the underlying directed space is a finite cubical complex, even if the space admits directed loops. In many examples, in particular for the example from Section \ref{ss:Dubut} motivating this paper in the first place, the quotient categories retain essential information about dependence of the path/trace spaces on their end points in a compressed way. 
  
\subsection{Future work: Relations to other constructions}\label{ss:relations}
As the motivating example (Section \ref{ss:Dubut}) shows, previously studied component categories \cite{FGHR:04,GH:07,FGHMR:16} do not always deliver categories with a countable number of objects, even when the directed space has the nice structure of a finite cubical complex. This paper advises a way to overcome this default - and making the constructions of so-called ``Natural homology'' \cite{DGG:15} (and its precursor in \cite{Raussen:07}) applicable in this more general setting. In contrast to other approaches, at least in principle, the definitions are suitable also in cases where the space admits directed loops. This option has only been investigated in detail only in a few concrete cases in Section \ref{ss:dirloops}; further development is desirable.

Ziemia\'{n}ski found a different way to overcome the shortcomings of previous work on components by defining and investigating \emph{stable components} \cite{Ziemianski:18} that partition the directed space itself (instead of the space of reachable pairs). Path spaces between pairs of points within two components are, in general, not invariant up to $\mc{F}$-equivalence, but they become so after a stabilization process based on a number of well-motivated axioms. Path spaces between points in a given pair of components may vary but they stabilize when allowing large enough targets (resp.\ small enough sources). The resulting stable components are easier to determine than ours.  
It would be interesting to find out whether the components in this paper can be produced by partitioning reachable pairs of Ziemia\'{n}ski's stable components into smaller pieces.

In a different direction, it is a challenge to develop a directed version of Michael Farber's topological complexity \cite{Farber:08}. The definitions can easily be modified by requiring directed paths, but one needs a better understanding of the end point map $e: \vec{P}(X)\to \vec{X^2}$ and its partial sections. In the directed case, this map is, in general far from being a fibration, and the (Schwarz genus) methods from the classical theory are not available. First steps have been taken in \cite{GFS:18}; relations to methods from this paper might be helpful in future developments.

\subsection{Future work: Functoriality. Towards directed homotopy equivalences?}\label{ss:dhe}

A d-map $F: X\to Y$ between two d-spaces does not give rise to any relation between the spaces of endo-d-maps $\vec{C}(X,X)$ and $\vec{C}(Y,Y)$; cf Section \ref{sss:endo} for the notation. Instead, one needs a \emph{pair} $F: X\to Y$ and $G: Y\to X$ of d-maps giving rise to the maps $(G,F)_{\#}: \vec{C}(X,X)\to \vec{C}(X,X),\; f\mapsto F\circ f\circ G$ and $(F,G)_{\#}: \vec{C}(Y,Y)\to \vec{C}(Y,Y),\; g\mapsto G\circ g\circ F$. We need further properties allowing us to relate the ``homotopy dynamics'' on the two d-spaces:

\begin{definition}\label{df:dhe}
A pair of d-maps $F: X\to Y, G:Y\to X$ is called an $\alpha\mc{F}$-\emph{equivalence} if there exist $\alpha\mc{F}$ psp homotopy flows $H_1:X\times I\to X$ connecting $id_X$ with $G\circ F$ and $H_2:Y\times I\to Y$ connecting $id_Y$ with $F\circ G$.
\end{definition} 

In particular, for $\mc{F}=\mc{F}_{\infty}$, the map $F$ is an ordinary weak homotopy equivalence. 

\begin{lemma}\label{lem:indeq}
An $\alpha\mc{F}$-\emph{equivalence} $(F,G)$ induces $\mc{F}$-equivalences\\ $\vec{T}(G\circ F):\vec{T}(X)_{x_1}^{x_2}$ $\to\vec{T}(X)_{GFx_1}^{GFx_2}, \vec{T}(F\circ G):\vec{T}(Y)_{y_1}^{y_2}\to\vec{T}(Y)_{FGy_1}^{FGy_2},$\\ $\vec{T}(F): \vec{T}(X)_{x_1}^{x_2}\to\vec{T}(Y)_{Fx_1}^{Fx_2}$ and $\vec{T}(G): \vec{T}(Y)_{y_1}^{y_2}\to\vec{T}(X)_{Gy_1}^{Gy_2},\; x_i\in X, y_i\in Y$.
\end{lemma}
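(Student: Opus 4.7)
The plan is to reduce all four assertions to the $\mc{F}$-psp-ness of the endpoint maps $G\circ F$ and $F\circ G$ of the given homotopy flows, and then to extract the assertions about $\vec{T}(F)$ and $\vec{T}(G)$ via a standard two-out-of-six argument for $\mc{F}$-equivalences.

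First I would observe that, since $H_1:X\times\vec{I}\to X$ is an $\alpha\mc{F}$ psp homotopy flow, Definition \ref{def:psp}(2) forces every level map of $H_1$ to be $\mc{F}$-psp; in particular $(H_1)_1=G\circ F$ is an $\mc{F}$-psp d-map. Hence for every reachable pair $(x_1,x_2)\in\vec{X^2}$ the induced map $\vec{T}(G\circ F):\vec{T}(X)_{x_1}^{x_2}\to\vec{T}(X)_{GFx_1}^{GFx_2}$ is an $\mc{F}$-equivalence. The same argument applied to $H_2$ yields that $\vec{T}(F\circ G)$ is an $\mc{F}$-equivalence on every non-empty $\vec{T}(Y)_{y_1}^{y_2}$. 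This already covers the first two conclusions of the lemma.

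For $\vec{T}(F)$ at an arbitrary pair $(x_1,x_2)\in\vec{X^2}$ I would exploit the functoriality $\vec{T}(G\circ F)=\vec{T}(G)\circ\vec{T}(F)$ and $\vec{T}(F\circ G)=\vec{T}(F)\circ\vec{T}(G)$, and inspect the four-term chain
\[\vec{T}(X)_{x_1}^{x_2}\xrightarrow{\vec{T}(F)}\vec{T}(Y)_{Fx_1}^{Fx_2}\xrightarrow{\vec{T}(G)}\vec{T}(X)_{GFx_1}^{GFx_2}\xrightarrow{\vec{T}(F)}\vec{T}(Y)_{FGFx_1}^{FGFx_2}.\]
The composition of the first two arrows is $\vec{T}(G\circ F)$, and that of the last two is $\vec{T}(F\circ G)$ applied at $(Fx_1,Fx_2)$; both are $\mc{F}$-equivalences by the previous paragraph. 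Invoking the two-out-of-six property for $\mc{F}$-equivalences ($\beta\alpha,\gamma\beta\in\mc{F}\Rightarrow\alpha,\beta,\gamma\in\mc{F}$) simultaneously produces that $\vec{T}(F)$ at $(x_1,x_2)$ and $\vec{T}(G)$ at $(Fx_1,Fx_2)$ lie in $\mc{F}$. A symmetric four-term chain beginning in $\vec{T}(Y)_{y_1}^{y_2}$ with roles of $F$ and $G$ swapped delivers the remaining assertion about $\vec{T}(G)$.

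The main obstacle is the two-out-of-six step itself. For the three concrete families $\mc{F}_0,\mc{F}_\infty,\mc{H}_{A,k}$ highlighted in Section \ref{sss:psp} the property is routine: each class is defined by the induced map being an isomorphism on an underlying functor (path components, the tower of homotopy groups based at all basepoints, or a range of homology modules), and isomorphisms in any category satisfy two-out-of-six by the elementary diagram chase extracting simultaneously a left inverse $(\gamma\beta)^{-1}\gamma$ and a right inverse $\alpha(\beta\alpha)^{-1}$ for the middle map $\beta$, which then force $\alpha$ and $\gamma$ to be isomorphisms as well. For a more general sandwiched family $\mc{F}$, two-out-of-six would need to be added as an explicit axiom; it is in fact implied by two-out-of-three together with closure under retracts, both of which are part of Ziemia\'{n}ski's original axiom list for equivalence systems.
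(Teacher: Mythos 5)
Your proof is correct and follows essentially the same route as the paper: the first two equivalences come directly from the definitions (the psp property of the flows, whose level maps include $G\circ F$ and $F\circ G$), and the claims for $\vec{T}(F)$ and $\vec{T}(G)$ are obtained by the two-out-of-six property applied to exactly the kind of four-term chains you write down. One caveat on your closing aside: two-out-of-three together with closure under retracts does \emph{not} imply two-out-of-six for a general class of morphisms, so for an abstract sandwiched family $\mc{F}$ the two-out-of-six property genuinely has to be imposed or verified separately, as you in fact do for the concrete families $\mc{F}_0$, $\mc{F}_\infty$ and $\mc{H}_{A,k}$.
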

\begin{proof}
The first two properties follow from the definitions. For the last claims, apply the 2-out-of-6 property for (our) $\mc{F}$-equivalences.
\end{proof}

It is tempting to call a d-map $F: X\to Y$ satisfying the requirements in Definition \ref{df:dhe} a directed $\alpha\mc{F}$ equivalence; and for $\mc{F}=\mc{F}_{\infty}$ - the (weak) homotopy equivalences - a directed (weak) $\alpha$ homotopy equivalence. But this definition is still not quite satisfactory. It is not clear that this notion
\begin{itemize}
\item satisfies a 2-out-of-3-properties
\item leaves directed topological complexity \cite{GFS:18} invariant, and
\item that it behaves well with respect to components.
\end{itemize}
The follow-up paper \cite{Raussen:19} responds to these challenges with an adjustement of Definition \ref{df:dhe} as point of departure.

\end{document}